\newtheorem*{lemma*}{Lemma}
\newtheorem{theorem}{Theorem}[section]
\newtheorem{lemma}[theorem]{Lemma}
\newtheorem{proposition}[theorem]{Proposition}
\theoremstyle{definition}
\newtheorem{definition}{Definition}
\newtheorem{remark}[theorem]{Remark}
\renewcommand{\P}{\mathbb{P}}
\newcommand{\E}{{\mathbb{E}}}
\newcommand{\1}{\mathds{1}}
\newcommand{\R}{\mathbb{R}}
\newcommand{\norm}[1]{\left\lVert#1\right\rVert}
\newcommand{\ep}{\varepsilon} 
\newcommand{\e}{\varepsilon}
\newcommand{\N}{\mathbb{N}}
	\renewcommand{\P}{\mathbb{P}}
\newcommand{\cE}{\mathcal{E}}
\newcommand{\cF}{\mathcal{F}}
\newcommand{\cG}{\mathcal{G}}
\newcommand{\cM}{\mathcal{M}}
\renewcommand{\setminus}{\backslash}
\DeclareMathOperator{\dist}{dist}
\newtheorem{maintheorem}{Theorem}
\def\ba{\begin{align}}
\def\ea{\end{align}}
\def\bs{\begin{split}}
\def\es{\end{split}}
\newcommand{\paren}[1]{\left( #1 \right)}
\newcommand{\parenn}[1]{\left\{ #1 \right\}}
\newcommand{\size}[1]{\left| #1 \right|}
\begin{document}

\title{Extremal spectral behavior of weighted random $d$-regular graphs}
\author{Jaehun Lee and Kyeongsik Nam}

\begin{abstract}
Analyzing the spectral behavior of random matrices with \emph{dependency}  among entries is a challenging problem. The adjacency matrix of the random $d$-regular graph is a prominent example that has attracted immense interest. A crucial spectral observable  is the extremal eigenvalue, which reveals useful geometric properties of the graph.  According to the Alon's conjecture, which was verified by Friedman \cite{Friedman08}, the (nontrivial) extremal eigenvalue of the random $d$-regular graph   is approximately $2\sqrt{d-1}$.

    In the present paper,
we analyze the extremal spectrum  of  the random $d$-regular graph  (with $d\ge 3$ fixed)  equipped with random edge-weights,  and  precisely describe its phase transition behavior with respect to the tail of edge-weights.  
 In addition, we establish that  the extremal eigenvector is always localized, showing a sharp contrast to the unweighted case  where all eigenvectors are delocalized. 
 Our method is robust and inspired by a  sparsification technique developed in the context of Erd\H{o}s-R\'{e}nyi graphs \cite{GN22}, 
 which can also be applied to analyze  the spectrum of general  random matrices  whose entries are dependent.
\end{abstract}

\address{Department of Mathematics, HKUST, Hong Kong} 
\email{jaehun.lee@ust.hk}

\address{Department of Mathematical Sciences, KAIST, South Korea}
\email{ksnam@kaist.ac.kr}

\maketitle

\tableofcontents

\section{Introduction}
 
Spectral statistics arising from random graphs and networks has been the subject of significant importance. Erd\H{o}s-R\'{e}nyi graph  $\cG_{n,p}$ is one of crucial types of networks, where each edge is included in the graph of size $n$ with probability  $p$, independently from every other edge. Another  fundamental model is the random $d$-regular graph $\cM_{n,d}$,  a uniform distribution on  the collection of (simple) $d$-regular graphs of size $n$. A crucial feature of the random $d$-regular graph is a presence of  \emph{dependency} between edges.

For the purpose of 
encoding  physically relevant phenomena, edges 
in the network are  
often 
equipped with random weights, which 
denote resistances 
in the 
context of 
electrical networks. For instance, the adjacency matrix 
of the Erd\H{o}s-R\'{e}nyi graph $\cG_{n,p}$ 
equipped with 
random edge-weights can  be 
viewed as a  {sparse} version of Wigner matrices.

Spectral properties of the weighted Erd\H{o}s-R\'{e}nyi graph $\cG_{n,p}$ have been extensively studied and are quite well-understood. A particularly central question is to verify the  {universality} principle which refers to the phenomenon that the asymptotic spectral behavior does not depend on the type of entry distributions. For instance, if the sparsity $p$ satisfies $p\gg \frac{1}{n}$, then  the appropriately scaled empirical spectral distribution  converges to the Wigner’s semicircle law \cite{rodgers1988density, KKPS89}, and the re-scaled largest eigenvalue lies near the edge of the  semicircle law \cite{MR1825319, MR3098073, MR2964770}. In contrast, in the case of constant average degree of sparsity (i.e.~$p=\frac{d}{n}$ with $d$ fixed), this universal behavior  breaks down and the spectrum heavily depends on the entry distribution. Although it is currently intractable to examine entire eigenvalues of such sparse Wigner matrices, the largest eigenvalue was successfully analyzed in the recent work \cite{GN22, GHN22+}.

On the other hand, it is much more delicate to analyze the spectral behavior of random matrices whose entries are \emph{not} independent. The  prominent example includes the adjacency matrix of the random $d$-regular graph  which arises naturally in  theoretical
computer science, combinatorics
and statistical 
physics \cite{Anantharaman17, cohen2016ramanujan, MR963118, MR3374962, MR3892446}.
Spectral information of the random $d$-regular graph reveals crucial geometric features of the graph. For instance, the spectral gap, a difference between the first  and second largest eigenvalues, measures the expanding property of the graph. 
The celebrated Alon's conjecture \cite{Alon86} states that random $d$-regular graphs (with $d\ge 3$ fixed) are weakly 
Ramanujan with high probability\footnote{We say that a series of events $\{E_n\}_{n\ge 1}$ occur \emph{with high probability} if $\lim_{n\to\infty}\P(E_n)=1$. }, i.e.~all nontrivial eigenvalues are {essentially} bounded by $2\sqrt{d-1}$ in absolute value. This was first proved by 
Friedman \cite{Friedman08} and subsequently by  Bordenave \cite{Bordenave20}, using sophisticated moment methods. Recently, it is shown \cite{HY21+} that (nontrivial) extremal eigenvalues are 
concentrated  around $2\sqrt{d-1}$ with a polynomial 
error bound, with the aid of resolvent methods.

However, once the entries of the adjacency matrix of random $d$-regular graphs are allowed to take general values other than 0 or 1,
the spectral information has not been known so far.  
Such random networks naturally arises in the context of the neural network theory \cite{sompolinsky1988chaos, MR3403052}. 
 The goal of the present paper is to explore the  spectral behavior of general random networks  on the random $d$-regular graph, i.e.~\emph{weighted} random $d$-regular graph. We
  precisely describe its extreme eigenvalue in terms of the tail distribution of edge-weights. In addition, we analyze the localization phenomenon of the extreme eigenvector. Our methodology is robust and can also be applied to examine the spectral behavior of a wide class of random matrices whose entries are dependent.

Before presenting our main results, we 
emphasize some of 
the key features of our 
results. The most fundamental 
one is the law of large 
numbers (LLN) result for the largest eigenvalue of the weighted random $d$-regular graph. We  show that the extremal eigenvalue exhibits a (continuous) phase transition with respect to the   edge-weight distributions, and verify that the transition occurs when the edge-weights possess a Gaussian tail. In addition, we prove that  $\ell^2$-mass of the extremal eigenvector  is essentially supported only on $n^\e$ 
 vertices  ($\e>0$ is arbitrary constant)  with high probability. This shows  a substantial difference from the case of \emph{unweighted} random $d$-regular graphs, where all eigenvectors are shown to be delocalized \cite{HY21+}.

~

Let us now precisely define our model and state our main results.

\subsection{Main results}
Throughout the paper, for any graph $G$, we denote by $V(G)$ and   $E(G)$  the set of vertices and   edges in $G$ respectively. Each element in $E(G)$ will be written as  $(i,j) =  (j,i)$ with $i,j\in V(G)$.

We first recall the definition of the  random $d$-regular graph.
\begin{definition}[Random $d$-regular graph]
    The \emph{random $d$-regular graph} is the random graph chosen uniformly from the set of (simple) $d$-regular graphs with a vertex set $[n]\coloneqq \{1,2,\cdots,n \}$.
\end{definition}
We denote by $A = (A_{ij})_{i,j\in [n]}$ the adjacency matrix of the random $d$-regular graph. In other words, $A$ is the matrix ensemble uniformly chosen among all symmetric matrices of size $n\times n$, whose entries are either 0 or 1, such that
\begin{align*}
    A_{ii}=0, \quad \textstyle{\sum}_{j=1}^{n}A_{ij}=d, \quad \forall i\in [n].
\end{align*}
Throughout this paper, we shall identify a graph with its adjacency matrix for brevity.

~

Now, we introduce the matrix ensemble considered in this paper.
Let $W = (W_{ij })_{i,j\in [n] }$ be a standard (symmetric) Wigner matrix independent of $A$, in other words $W_{ij }=W_{ji}$ 
and $\{W_{ij}\}_{1\le i< j\le n}$ are i.i.d random
variables. The matrix of interest  is {the Hadamard product} $X =A  \odot W$, i.e., $X_{ij} =A_{ij}W_{ij}$. This is a sparse random
matrix which can be regarded as the adjacency matrix of a network, whose underlying graph is the random $d$-regular graph $A$ induced with i.i.d.~edge-weights $\{W_{ij}\}_{1\le i< j\le n}$.  We assume that each edge-weight $W_{ij}$ is given by  the following Weibull random variables with a shape parameter $\alpha>0$:\begin{definition}[Weibull distribution]\label{def: weights}
 $W$ is called a \emph{Weibull} random variable with a shape parameter $\alpha>0$ if  
 there exist constants $C_{1}, C_{2}>0$ such that for all $t \ge 1$,
		\begin{equation*}
		C_{1}\exp(-t^{\alpha})/2 \le \P(W \ge t) \le C_{2}\exp(-t^{\alpha})/2,
		\end{equation*}
		and
		\begin{equation*}
		C_{1}\exp(-t^{\alpha})/2 \le \P(W \le -t) \le C_{2}\exp(-t^{\alpha})/2.
		\end{equation*}
	In particular, for all $t \ge 1,$
		\begin{equation*}
		C_{1}\exp(-t^{\alpha}) \le \P(|W| \ge t) \le C_{2}\exp(-t^{\alpha}).
		\end{equation*}
\end{definition}

We denote the eigenvalues of $X$, in a non-increasing order, by
\begin{equation*}
\lambda_{1}(X)\ge\lambda_{2}(X)\ge\cdots\ge\lambda_{n}(X).
\end{equation*}
One of the main results is about the asymptotic behavior of the largest eigenvalue, $\lambda_{1}(X)$. For any (fixed) integer $d\ge 3$, let us denote by $\mathbb{T}_d$  the infinite  $d$-regular tree.
\begin{maintheorem}[Law of large numbers for the largest eigenvalue]\label{thm: LLN} 
 Let $d\geq 3$ be any (fixed) integer. Then, there exists a continuous function $h_d:(2,\infty) \rightarrow (1,\infty)$ such that
	\begin{equation*}
	\lim_{n\to\infty} \frac{\lambda_{1}(X)}{(\log n)^{\frac{1}{\alpha}}} =
	\begin{cases*}
	1 & $0<\alpha\le 2$,\\
	 h_d(\alpha)& $\alpha > 2$,
	\end{cases*}
	\end{equation*}
	in probability. The function $h_d(\alpha)$ is given by
 \begin{align} \label{h}
     h_d(\alpha) \coloneqq  2\cdot \sup_{\textup{\textbf{u}}=(u_i)_{i \in V(\mathbb{T}_d)}, \norm{\textup{\textbf{u}}}_1=1} \Big( \sum_{ (i,j)\in E
     (\mathbb{T}_{d}) } 
 |u_{i}u_j| ^{ \frac{\alpha}{2(\alpha-1)} } \Big)^{ \frac{\alpha-1}{\alpha} },\quad \alpha>2,
 \end{align}
which satisfies $\lim_{\alpha \downarrow 2}h_d(\alpha) = 1 $ and  $\lim_{\alpha \rightarrow \infty} h_d(\alpha)  =2 \sqrt{d-1} .$
\end{maintheorem}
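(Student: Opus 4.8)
The plan is to prove matching lower and upper bounds for $\lambda_{1}(X)/(\log n)^{1/\alpha}$ in probability; the constant $h_d(\alpha)$ emerges as the optimal rate of a moderate-deviation problem for the largest eigenvalue of a randomly weighted finite tree, and \eqref{h} is obtained from the equivalent ``$\ell^2$-normalized'' expression via the substitution $v_i=u_i^2$. Write $N:=|E(A)|=dn/2$, so that $(\log N)^{1/\alpha}=(1+o(1))(\log n)^{1/\alpha}$, and recall that a random $d$-regular graph is locally tree-like: for fixed $r$, with high probability all but $o(n)$ vertices have their $r$-neighborhood isomorphic to a ball of $\mathbb{T}_d$, and for fixed $k$ the number of connected subgraphs with at most $k$ edges is $O(n)$.

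\emph{Lower bound.} Cauchy interlacing applied to the $2\times 2$ principal submatrix of $X$ on the endpoints of an edge, where $X$ has zero diagonal, gives $\lambda_{1}(X)\ge\max_{(i,j)\in E(A)}|W_{ij}|$, and extreme-value estimates for $N$ i.i.d.\ Weibull($\alpha$) variables give $\max_{(i,j)\in E(A)}|W_{ij}|=(1+o(1))(\log n)^{1/\alpha}$ in probability; this is already sharp for $0<\alpha\le 2$, where the extremal structure is a single edge. For $\alpha>2$ one plants a near-optimal subtree: given $\e'>0$, fix a finite subtree $T_0\subseteq\mathbb{T}_d$ and a unit vector $\mathbf{u}^0\ge 0$ on $V(T_0)$ with $\big(\sum_{(i,j)\in E(T_0)}(2u^0_iu^0_j)^{\alpha/(\alpha-1)}\big)^{(\alpha-1)/\alpha}\ge h_d(\alpha)-\e'$, which exists since, after $v_i=(u^0_i)^2$ (so $\|v\|_1=1$), this equals the finite-subtree truncation of the supremum in \eqref{h}. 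With high probability $A$ contains $\Theta(n)$ vertex-disjoint induced copies of $T_0$, carrying i.i.d.\ weight vectors, and a Cram\'er-type lower bound gives, on each,
\[
\P\Big(\textstyle\sum_{(i,j)\in E(T_0)}2|u^0_iu^0_j|\,|W_{ij}|\ \ge\ (1-\delta)\,h_d(\alpha)\,(\log n)^{1/\alpha}\Big)\ \ge\ n^{-(1-\delta)^\alpha+o(1)}.
\]
Since $(1-\delta)^\alpha<1$ and the copies are independent, some copy attains this bound with high probability; gauging the signs of $\mathbf{u}^0$ along that tree to match those of the relevant $W_{ij}$ produces a unit test vector witnessing $\lambda_{1}(X)\ge(1-\delta)(h_d(\alpha)-\e')(\log n)^{1/\alpha}$. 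Let $\delta,\e'\downarrow 0$.

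\emph{Upper bound.} I would follow the sparsification strategy of \cite{GN22}. Fix $\tau=\e(\log n)^{1/\alpha}$ and split $X=X_{\le\tau}+X_{>\tau}$ according to $|W_{ij}|\le\tau$ or $>\tau$. The light part has entries bounded by $\tau$ on a $d$-regular graph, so $\|X_{\le\tau}\|\le d\tau\le C_d\,\e\,(\log n)^{1/\alpha}$, and by Weyl's inequality it remains to bound $\lambda_{1}(X_{>\tau})$. The heavy subgraph $H=\{(i,j)\in E(A):|W_{ij}|>\tau\}$ is a strongly subcritical bond percolation of $A$ (a vertex meets a heavy edge with probability $\asymp n^{-\e^\alpha}\to 0$); combined with local tree-likeness, a first-moment bound shows that with high probability every connected component of $H$ is a tree with at most $m_0=m_0(d,\e)$ edges, a constant. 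Then $X_{>\tau}$ is block-diagonal along these components, so $\lambda_{1}(X_{>\tau})=\max_{\mathcal{C}}\lambda_{1}(W|_{\mathcal{C}})$, a maximum over the $n^{1+o(1)}$ subtrees of $A$ with at most $m_0$ edges. For a tree $T$, a gauge transformation (conjugation by a diagonal $\pm1$ matrix, possible since $T$ has no cycles) gives $\lambda_{1}(W|_T)=\lambda_{1}(|W|\text{ on }T)=\sup_{\|\mathbf{u}\|_2=1,\,\mathbf{u}\ge 0}\sum_{(i,j)\in E(T)}2u_iu_j|W_{ij}|$, a convex positively $1$-homogeneous function of $(|W_e|)_{e\in E(T)}$. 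Using the moderate-deviation estimate $\P(\sum_e c_e|W_e|\ge t)=\exp\big(-(1+o(1))\,t^\alpha/(\sum_e c_e^{\alpha/(\alpha-1)})^{\alpha-1}\big)$ for $\alpha>1$ (for $\alpha\le 1$ the rate is $t^\alpha/(\max_e c_e)^\alpha$) together with a discretization of the test vectors $\mathbf{u}$, and writing $\bar S_T:=\sup_{\|\mathbf{u}\|_2=1,\,\mathbf{u}\ge 0}\sum_{(i,j)\in E(T)}(2u_iu_j)^{\alpha/(\alpha-1)}$, one gets
\[
\P\big(\lambda_{1}(W|_T)\ge t\big)\ \le\ \exp\!\Big(-(1-o(1))\,t^\alpha\big/\bar S_T^{\,\alpha-1}\Big).
\]
A one-line AM--GM argument gives $\bar S_T\le d$ uniformly in $T$, and the substitution $v_i=u_i^2$ identifies $\bar S^\ast:=\sup_T\bar S_T$ (the supremum over finite subtrees of $\mathbb{T}_d$) through $(\bar S^\ast)^{(\alpha-1)/\alpha}=h_d(\alpha)$ for $\alpha>2$, while $\bar S^\ast=1$ for $\alpha\le 2$. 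Taking $t=(h_d(\alpha)+\eta)(\log n)^{1/\alpha}$ makes the union bound over the $n^{1+o(1)}$ subtrees summable, since $t^\alpha/(\bar S^\ast)^{\alpha-1}=(1+\eta/h_d(\alpha))^\alpha\log n$; hence $\lambda_{1}(X_{>\tau})\le(h_d(\alpha)+\eta)(\log n)^{1/\alpha}$ with high probability. Letting $\eta,\e\downarrow 0$ completes the upper bound, and together with the lower bound, the convergence in probability.

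\emph{Properties of $h_d$; main obstacle.} Continuity of $h_d$ on $(2,\infty)$ follows since it is a supremum over finite $(T_0,v)$ of continuous functions of $\alpha$ that are equicontinuous on compact $\alpha$-intervals by the uniform bound $\bar S_T\le d$. As $\alpha\downarrow 2$ the functional degenerates to $2\big(\sup_{\|v\|_1=1}\sum_{(i,j)\in E(\mathbb{T}_d)}v_iv_j\big)^{1/2}=2\cdot(1/4)^{1/2}=1$ (using that $\mathbb{T}_d$ is bipartite), and as $\alpha\to\infty$ to $2\sup_{\|w\|_2=1}\tfrac12\langle w,A_{\mathbb{T}_d}w\rangle=\|A_{\mathbb{T}_d}\|=2\sqrt{d-1}$; in both cases the limit passes through the supremum by the same uniform bound. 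The main difficulty is precisely the uniform moderate-deviation bound for $\lambda_{1}$ of weighted trees together with the exact identification of its rate: one must replace the supremum over infinitely many test vectors by a union bound via a net whose polynomial overhead is negligible beside $n^{1+o(1)}$, verify that none of the $(1+o(1))$ losses (from the net, the Weibull tail estimate, and the $\tau$-truncation) corrupts the constant, and confirm that $\sup_T\bar S_T$ reproduces \eqref{h}. The reduction to small tree components, via local tree-likeness and subcriticality, is more routine but still requires care to be uniform over all $\Theta(n)$ components.
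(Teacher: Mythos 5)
Your overall architecture matches the paper's: a lower bound by planting near-optimal weight configurations on $\Theta(n)$ vertex-disjoint local trees (with conditional independence given $A$), an upper bound by truncating the weights, discarding the bounded-spectral-norm light part, and controlling the sparse heavy part component-by-component via a H\"older/moderate-deviation bound whose rate is governed by the variational constant; and the same substitution $v_i=u_i^2$ linking the $\ell^1$- and $\ell^2$-normalized forms of \eqref{h}. The genuine difference is in the upper bound's truncation scale. The paper truncates at $b_n^{1/\alpha}$ with $b_n\ll\log n$, so components of the heavy subgraph have $\Theta(\log n/b_n)\to\infty$ edges and carry weights conditioned to exceed $b_n^{1/\alpha}$; the resulting heavier conditional tails force the additional step of splitting each tree according to the size of the eigenvector entries (the analogue of \cite[Proposition 5.7]{GN22}). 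You instead truncate at $\e(\log n)^{1/\alpha}$, so components have a \emph{constant} number $m_0(d,\e)$ of edges, and you bound $\max_{\mathcal C}\lambda_1(W|_{\mathcal C})$ by the maximum over all $O(n)$ subtrees of $A$ with at most $m_0$ edges using the \emph{unconditioned} weights — which sidesteps the conditioning issue entirely. This is a legitimate and cleaner route for the LLN (the entropy of the union bound and the net over test vectors are both $O(1)$ per tree), but it is intrinsically limited to a relative error of order $\e(\log n)^{1/\alpha}$ from the light part; the paper's more delicate choice of $b_n$ is what yields the finer upper-tail window $(\log n)^{\tau/\alpha}$, $\tau<1$, of Proposition \ref{prop: upper tail upper bound}, which is then essential for the localization Theorem \ref{thm: localization}.

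Two minor caveats. First, your continuity argument for $h_d$ ("equicontinuous by the uniform bound $\bar S_T\le d$") is not a proof as stated — a supremum of continuous functions is only lower semicontinuous in general; one needs a quantitative modulus (e.g.\ $\sum_e c_e^q|\log c_e|\le d/(e(q-1))$ for $c_e=2u_iu_j$, which does give equicontinuity of $q\mapsto\sum_e c_e^q$ on compact subintervals), or the paper's route via compactness of the $\ell^1$-ball in the product topology (Lemma \ref{lemma 5.1}); the same remark applies to interchanging the limits $\alpha\downarrow2$, $\alpha\to\infty$ with the supremum. Second, the uniform moderate-deviation upper bound $\P(\lambda_1(W|_T)\ge t)\le\exp(-(1-o(1))t^\alpha/\bar S_T^{\alpha-1})$ is asserted rather than derived; it does follow (e.g.\ by the paper's H\"older step \eqref{144} plus a tail bound on $\sum_e|W_e|^\alpha$ for constantly many unconditioned Weibulls, as in Lemma \ref{lem: conditioned alpha power sum asymp} with $b=1$), but this is the step you correctly identify as the crux and it should be written out.
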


Theorem \ref{thm: LLN}  first establishes the law of large numbers (LLN) behavior for the extreme eigenvalue of weighted random $d$-regular graphs. Authors might notice the following  aspects of Theorem  \ref{thm: LLN}:
\begin{enumerate}
    \item Limiting behavior of   the (normalized) largest eigenvalue, as $\alpha\to \infty$, is equal to $2\sqrt{d-1}$. This quantity is  the
    typical value  of the (nontrivial) largest eigenvalue of the \emph{unweighted} random $d$-regular graph, which  formally corresponds to the case ``$\alpha=\infty$''.
    \item Since $h_d(\alpha)>1$ for $\alpha>2$, the largest eigenvalue of the weighted random $d$-regular graph exhibits a phase transition at $\alpha=2,$ i.e.~when the edge-weights possess a Gaussian tail.
\end{enumerate}

We elaborate on these aspects in more details in the following remarks.
\begin{remark}[\emph{Unweighted} random $d$-regular graphs: Limiting behavior as $\alpha\rightarrow \infty$]
The random $d$-regular graph \emph{without} edge-weights can be formally regarded as the case ``$\alpha=\infty$''.  Note that by Alon's conjecture \cite{Alon86}, which was first verified by Friedman  \cite{Friedman08} and further by Bordenave \cite{Bordenave20}, the (nontrivial) largest eigenvalue of the random $d$-regular graph is concentrated near $2\sqrt{d-1}$ with high probability. Hence, this is compatible with the result $\lim_{\alpha \rightarrow \infty} h_d(\alpha)  =2 \sqrt{d-1}$ in  Theorem \ref{thm: LLN}.

Asymptotic behaviors of the (nontrivial) largest eigenvalue of unweighted random $d$-regular graphs is obtained using a sophisticated moment method.  However, the moment method is not applicable to analyze the extremal spectrum of \emph{weighted} random $d$-regular graphs.
As described in Section \ref{idea} later, our methodology is completely different from the moment \cite{Friedman08, Bordenave20} and resolvent method \cite{HY21+}, and rather  is based on the idea of sparsification exploited in the context of Erd\H{o}s-R\'{e}nyi graphs \cite{GN22}. It is an interesting problem to reprove the Alon's conjecture using our methods, by inducing edge-weights with  an  appropriate shape parameter $\alpha$ chosen to be slowly growing in $n$.
\end{remark}

\begin{remark}[\emph{Continuous} phase transition at $\alpha=2$]
    Theorem \ref{thm: LLN}, in particular $h_d(\alpha)>1$ for $\alpha>2$, says that the largest eigenvalue of the weighted random $d$-regular graph exhibits a phase transition at $\alpha=2$, i.e.~when the edge-weights possess a Gaussian tail. Since  $\lim_{\alpha \downarrow 2}h_d(\alpha) = 1 $, the phase transition occurs continuously.
\end{remark}

\begin{remark}[Spectral norm]
    By considering the matrix $-X = A \odot (-W)$, one can deduce that the largest eigenvalue of $-X$, which is equal to $-\lambda_n(X)$, satisfies the result in Theorem \ref{thm: LLN} as well. Therefore, we establish that the spectral norm  $\norm{X}$ satisfies
    \begin{equation*}
	\lim_{n\to\infty} \frac{\norm{X}}{(\log n)^{\frac{1}{\alpha}}} =
	\begin{cases*}
	1 & $0<\alpha\le 2$,\\
	 h_d(\alpha)& $\alpha > 2$,
	\end{cases*}
	\end{equation*}
	in probability.
\end{remark}

\begin{remark}[Random $d$-regular graph versus Erd\H{o}s-R\'{e}nyi graph]\label{compare}
Random $d$-regular graphs $\mathcal{M}_{n,d}$ and Erd\H{o}s-R\'{e}nyi graphs $\cG_{n,d/n}$ possess notably different  structures when $d$ is \emph{fixed}. For instance, in the latter case, there exists a vertex whose degree is of order $\frac{\log n}{\log \log n}$.
Therefore,  it is natural to expect that
    extreme eigenvalues of $\mathcal{M}_{n,d}$ and $\cG_{n,d/n}$ exhibit  different behaviors as well. Indeed, in the recent work \cite{GHN22+}, it is shown that the largest eigenvalue $\lambda_1$ of the weighted  Erd\H{o}s-R\'{e}nyi graph $\cG_{n,d/n}$ (whose edge-weights are given by i.i.d.~Weibull distributions with a shape parameter $\alpha>0$) satisfies
  \begin{align} \label{er}
     \lambda_1 \approx 
     \begin{cases*}
         (\log n)^{1/\alpha}&\quad $0<\alpha \le  2$,\\
         C_\alpha\frac{(\log n)^{1/2}}{(\log \log n)^{1/2 - 1/\alpha}}&\quad  $\alpha > 2$
     \end{cases*}
  \end{align}
($C_\alpha>0$ is an explicit constant). This together with Theorem \ref{thm: LLN} imply that when the edge-weights possess heavy-tails (i.e.~$\alpha\leq 2$), the typical value of the extremal eigenvalue of weighted random $d$-regular graphs  is same as that of  weighted  Erd\H{o}s-R\'{e}nyi graphs. Indeed in both cases, the largest eigenvalue is governed by the largest value of edge-weights.

However, in the light-tail case
$\alpha>2$, by Theorem \ref{thm: LLN} and \eqref{er}, the largest eigenvalue $\lambda_{1}$ of weighted   Erd\H{o}s-R\'{e}nyi graphs is much larger than that of weighted random $d$-regular graphs. In fact, governing  spectral mechanisms are substantially different. In the case of weighted random $d$-regular graphs, $\lambda_1$ is determined by the ``almost'' infinite $d$-regular tree induced with high edge-weights (see the discussion after Theorem \ref{thm: localization}).
Whereas  in the case of weighted  Erd\H{o}s-R\'{e}nyi graphs,   $\lambda_1$ is determined by a ``star graph'' of size  $\Theta(\frac{\log n}{\log \log n})$   with high 
edge-weights \cite{GHN22+}. This structural difference occurs since the Erd\H{o}s-R\'{e}nyi graph can have atypically large degree vertex whose associated star graph yields the desired value of $\lambda_1$. On the other hand, in the case of random $d$-regular graphs, since all degrees are equal to $d$, a   $d$-regular tree of a   large enough depth  is needed to attain the desired value of $\lambda_1$.
\end{remark}

Given the understanding on the largest eigenvalue, the next  fundamental question is to analyze the corresponding  top eigenvector. 
As discrete analogues 
of compact negatively 
curved surfaces, 
 eigenvectors of the 
 random $d$-regular graph have been extensively 
 studied in the context of quantum
 ergodicity (see \cite{icm2018} for the references). It is known that eigenvectors of (deterministic) $d$-regular graphs, possessing a locally tree-like structure, exhibit a \emph{delocalization}  property, i.e.~$\ell^2$-mass of eigenvectors cannot  be concentrated on the relatively small number of vertices \cite{MR3038543}. 
 Very recently, all eigenvectors of random $d$-regular graphs ($d\ge 3$ fixed) are shown to be completely delocalized with high probability \cite{HY21+}.

In this paper, we further pursue this direction and analyze the localization and delocalization property of the top eigenvector of the weighted random $d$-regular graph. The second main result of our paper is that the top eigenvector exhibits a \emph{localization}, once edge-weights are induced on the random $d$-regular graph. 
\begin{maintheorem}[Localization of the top eigenvector]\label{thm: localization} 
Let $d\geq 3$ be any (fixed) integer and $\alpha>0$. Let $\textup{\textbf{f}}=(f_i)_{i\in [n]}$ be a top eigenvector of $X$. Then, for any  $\e>0$,  for sufficiently large $n$ (depending on $\alpha$ and $\e$),
{with probability at least  $1-e^{ -   (\log n)^{1/2} }$},
there exists a subset $\mathcal{I} \in [n]$ with $ |\mathcal{I}| \le n^{\e }$  such that
    \begin{align} \label{2}
        \Big( \sum_{i\in \mathcal{I}} |f_i|^2 \Big)^{1/2}\ge (1-\e)  \norm{\textup{\textbf{f}}}_2.
    \end{align}
If in particular $\alpha \in (0,2)$, then there is a constant $\zeta_\alpha \in (0,1)$ such that {with probability at least $1-e^{ -   (\log n)^{1/2} }$}, there exists a collection $\mathcal{J}$ of at most $e^{\e(\log n)^{\zeta_{\alpha}}}$  vertex-disjoint edges in the random $d$-regular graph $A$ such that \eqref{2} holds with $\mathcal{I} \coloneqq \{v\in [n]: \text{$v$ is a vertex of some edge in $\mathcal{J}$}\}$.
     
\end{maintheorem}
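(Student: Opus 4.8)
The plan is to establish localization by combining the law-of-large-numbers result (Theorem \ref{thm: LLN}) with a matching lower bound coming from an explicit localized test vector, and then arguing by a "stability" argument that any near-maximizer of the Rayleigh quotient must itself be concentrated. The starting point is the observation that $\lambda_1(X) = \max_{\nrm{\vec f}_2 = 1} \langle \vec f, X \vec f\rangle$, and by Theorem \ref{thm: LLN} this value is $\approx (\log n)^{1/\alpha}$ (for $\alpha \le 2$) or $\approx h_d(\alpha)(\log n)^{1/\alpha}$ (for $\alpha > 2$), up to $o((\log n)^{1/\alpha})$ with probability tending to one — and in fact, tracking the proof of Theorem \ref{thm: LLN}, with probability at least $1 - e^{-(\log n)^{1/2}}$ (this is the exceptional-probability bound that should already be available from the sparsification machinery). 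The key structural input from that proof is that the largest eigenvalue is produced by a small subgraph $H \subseteq A$ of bounded size (a $d$-regular tree-like neighborhood, or in the heavy-tail case essentially a single heavily-weighted edge) carrying large edge-weights: more precisely, there is a subset $\mathcal I_0$ of at most $(\log n)^{O(1)}$ vertices such that $\lambda_1(X) \le \lambda_1(X|_{\mathcal I_0}) + o((\log n)^{1/\alpha})$, where $X|_{\mathcal I_0}$ is the restriction to rows/columns in $\mathcal I_0$. This restriction statement is really the heart of the sparsification approach and should be extractable from the earlier sections.

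**Given that,** the localization argument runs as follows. Write $\vec f$ for the (unit-norm) top eigenvector and decompose $\vec f = \vec f_{\mathcal I} + \vec f_{\mathcal I^c}$ where $\mathcal I$ is the vertex set on which we will concentrate. I would first handle the \emph{complementary} part: the point is that $X$ restricted to $[n] \setminus \mathcal I_0$, or more carefully to the graph with the "heavy" edges deleted, has spectral norm strictly smaller than $\lambda_1(X)$ — indeed by the same sparsification estimates its norm is at most $(1-\delta)\lambda_1(X)$ for some $\delta = \delta(\e) > 0$ once enough heavy structure has been removed, since removing the top few high-weight pieces drops the achievable Rayleigh quotient by a definite fraction. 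Then from $\lambda_1 \nrm{\vec f}_2^2 = \langle \vec f, X\vec f\rangle$ and a Cauchy–Schwarz / block decomposition of $\langle \vec f, X \vec f\rangle$ into the $\mathcal I\times\mathcal I$, $\mathcal I\times\mathcal I^c$, and $\mathcal I^c\times\mathcal I^c$ contributions, one gets
\begin{align*}
\lambda_1 \le \lambda_1(X|_{\mathcal I}) \nrm{\vec f_{\mathcal I}}_2^2 + 2\nrm{X}_{\mathcal I,\mathcal I^c}\nrm{\vec f_{\mathcal I}}_2 \nrm{\vec f_{\mathcal I^c}}_2 + \nrm{X|_{\mathcal I^c}}\nrm{\vec f_{\mathcal I^c}}_2^2,
\end{align*}
and since $\lambda_1(X|_{\mathcal I}), \nrm{X}_{\mathcal I,\mathcal I^c} \le \lambda_1(X)(1+o(1))$ while $\nrm{X|_{\mathcal I^c}} \le (1-\delta)\lambda_1(X)$, solving this inequality for $\nrm{\vec f_{\mathcal I^c}}_2^2 = 1 - \nrm{\vec f_{\mathcal I}}_2^2$ forces $\nrm{\vec f_{\mathcal I^c}}_2^2 \le \e$, i.e. \eqref{2}. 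The only subtlety is that the cross term $\nrm{X}_{\mathcal I,\mathcal I^c}$ a priori could be as large as $\lambda_1$; to kill it one enlarges $\mathcal I$ to include a full neighborhood so that edges across the cut carry only $O(1)$-sized weights (this is where $|\mathcal I| \le n^\e$ rather than $(\log n)^{O(1)}$ is genuinely used — absorbing polynomially-many sub-leading contributions), which makes $\nrm{X}_{\mathcal I,\mathcal I^c}$ itself $o((\log n)^{1/\alpha})$ and hence negligible against $\lambda_1$.

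**For the refined $\alpha \in (0,2)$ statement,** I would use that in the heavy-tail regime the proof of Theorem \ref{thm: LLN} shows the maximal Rayleigh quotient is already essentially saturated by a collection of vertex-disjoint heavily-weighted edges (each behaving like a rank-one $\pm$-pair), with the number of relevant such edges controlled by $e^{\e(\log n)^{\zeta_\alpha}}$ for an appropriate $\zeta_\alpha \in (0,1)$ coming from the tail exponent — this is because when $\alpha < 2$ the largest edge-weight alone is $\approx (\log n)^{1/\alpha}$ and only a sub-polynomial number of edges come within a $(1-\e)$ factor of it. One then takes $\mathcal J$ to be this family of disjoint heavy edges and repeats the block-decomposition argument with $\mathcal I$ the union of their endpoints, verifying that the rest of the matrix (including the $\mathcal I$–$\mathcal I^c$ cross terms, now of order $O(\sqrt{\log n})$ or smaller) is negligible.

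**The main obstacle** I anticipate is controlling the $\mathcal I$–$\mathcal I^c$ cross term cleanly — i.e. showing that expanding $\mathcal I$ to a polynomial-size (or sub-polynomial-size in the heavy-tail case) set genuinely suppresses all edges leaving $\mathcal I$ to weight $o((\log n)^{1/\alpha})$ with the claimed exceptional probability $e^{-(\log n)^{1/2}}$. This requires a union bound over the (random) choice of $\mathcal I$ and a quantitative statement that, outside the top heavy subgraph, no edge incident to the relevant $n^\e$ vertices has weight comparable to $\lambda_1$ — which in turn rests on the tail estimates for Weibull weights combined with the fact that the random $d$-regular graph has only $O(n^\e)$ edges incident to any fixed $n^\e$-vertex set. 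Packaging the various "remove heavy structure drops the norm by a factor $(1-\delta)$" claims in a way that is uniform over the choice of which heavy pieces to remove is the technically delicate part, but it should follow from the same decomposition lemmas already proved en route to Theorem \ref{thm: LLN}, so I would state it as a corollary of that machinery rather than reproving it.
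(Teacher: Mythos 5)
Your high-level strategy (compare a sharp upper bound on the Rayleigh quotient with the lower bound on $\lambda_1$, and conclude that the eigenvector mass must sit where the upper bound is nearly saturated) is in the same spirit as the paper's argument, but the central claim on which your block decomposition rests is false as stated. You assert that after removing a set $\mathcal I_0$ of $(\log n)^{O(1)}$ vertices carrying ``the top heavy subgraph,'' one has $\norm{X|_{\mathcal I_0^c}}\le(1-\delta)\lambda_1(X)$ for a definite $\delta>0$. This cannot hold: the near-maximizers are not unique or few. For $\alpha\le 2$, for instance, there are unboundedly many (indeed up to $n^{\Theta(\e)}$, or $e^{(\log n)^{\zeta_\alpha}}$ at the refined scale) vertex-disjoint edges whose weights are within $o((\log n)^{1/\alpha})$ of the maximum, each of which alone already gives a Rayleigh quotient $(1-o(1))\lambda_1(X)$; deleting any polylogarithmic-size set leaves almost all of them intact, so $\norm{X|_{\mathcal I_0^c}}=(1-o(1))\lambda_1(X)$ and no constant spectral gap is available. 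The correct mechanism -- and the heart of the paper's proof -- is to take $\mathcal I$ to be the union of \emph{all} tree components of the high-weight subnetwork $X^{(1,1)}$ whose $\ell^\alpha$-weight $S_k$ lies within the \emph{fluctuation scale} $\e^{1/2}\widetilde a_n^{1/\alpha}$ of $(\log n)^{1/\alpha}$ (and whose eigenvector profile nearly saturates the variational constant), to count these components via the Weibull tail (giving the $n^{\e}$, resp.\ $e^{\e(\log n)^{\zeta_\alpha}}$, bound), and to observe that the gap for the complementary components is only of order $\widetilde a_n^{1/\alpha}$, which forces the use of the refined lower bound \eqref{620} on $\lambda_1$ rather than the LLN alone. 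Relatedly, your count for $\alpha<2$ is quantitatively wrong: the number of edges with weight within a factor $(1-\e)$ of the maximum is $\approx n^{1-(1-\e)^\alpha}=n^{\Theta(\e)}$, which is polynomial, not sub-polynomial; sub-polynomiality only appears at the finer scale $(\log n)^{1/\alpha}-\e(\log n)^{\tau/\alpha}$, $\tau<1$.

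Two further ingredients are missing. First, the cross-term issue you worry about is handled in the paper not by ``enlarging $\mathcal I$ to a neighborhood'' but automatically by the weight truncation: $\mathcal I$ is a union of connected components of the graph $A^{(1)}$ of edges with $|W_{ij}|>b_n^{1/\alpha}$, so every edge leaving $\mathcal I$ lies in $X^{(2)}$ and the whole of $X^{(2)}$ has norm at most $db_n^{1/\alpha}=o(a_n^{1/\alpha})$; no union bound over choices of $\mathcal I$ is needed. Second, for the vertex-disjoint-edge statement with $1<\alpha<2$ the relevant components are trees of size up to $3\log n/b_n$, not single edges, so one needs a stability statement for the variational problem \eqref{general} with $\gamma=\beta/2>1$ (Lemma \ref{lemma 4.1}) to show that within each such tree the near-equality in H\"older forces the mass onto a single edge with $|f_{i_0}|^2,|f_{j_0}|^2\approx x_k/2$. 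Without these pieces the proposal does not close.
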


    Recalling that all eigenvectors, including edge eigenvectors, of  the ``unweighted''
 random $d$-regular graph (i.e.~formally, ``$\alpha= \infty$'') are delocalized \cite{HY21+},
Theorem \ref{thm: localization} shows that the behavior of the top eigenvector of the weighted random $d$-regular graph is substantially different from the unweighted case.  
In particular, the second statement   claims that when $0<\alpha<2,$ the top eigenvector is essentially supported on the vertices of sub-polynomially  many vertex-disjoint edges. Furthermore when $\alpha\ge 2$, our proof  shows that  the top eigenvector is supported on  at most $n^{\e}$ vertex-disjoint $d$-regular trees of reasonably controlled size.  
We refer to Remark \ref{local} for the elaboration on this.
     
\begin{remark}[More general weights]\label{poly}
Although we work with edge-weight distributions satisfying Definition \ref{def: weights}  for the sake of brevity, one can  generalize  this condition as follows. By a  simple rescaling argument with  
a little additional work, one can still establish Theorems \ref{thm: LLN} and \ref{thm: localization} for edge-weight distributions satisfying  that for $t\geq 1,$
\begin{align*}
\frac{C_1}{2}   t^{-c_1} e^{-\eta t^\alpha}  \leq \P\big (W   \geq  t \big ) \leq   \frac{C_2}{2} t^{-c_2} e^{-\eta t^\alpha}  \quad  \text{ and } \quad     \frac{C_1}{2}   t^{-c_1} e^{-\eta t^\alpha}  \leq \P \big ( W \leq  -t \big ) \leq  \frac{C_2}{2} t^{-c_2} e^{-\eta t^\alpha}
\end{align*}
($\eta>0$ is a  scale parameter and $c_1,c_2\geq 0$ are constants). Note that this includes the Gaussian distribution which corresponds to the case $\alpha = 2$  and $\eta = c_1=c_2 = \frac{1}{2}$.

\end{remark}

\subsection{Related results}
We briefly review previously known results regarding the spectral behavior of random graphs and matrices, by focusing on  (weighted) Erd\H{o}s-R\'{e}nyi graphs  and random $d$-regular graphs.

\subsubsection{Erd\H{o}s-R\'{e}nyi 
 graph}
Erd\H{o}s-R\'{e}nyi graph $\cG_{n,p}$ is a fundamental model of random graphs, where  every edge is included independently of the other edges. Its spectral statistics has been extensively studied so far \cite{MR2964770, MR3098073, MR4515695, MR3800840, MR4288336, MR4021251, MR4089498}. As mentioned before, in a dense regime $p\gg \frac{1}{n}$, the macroscopic spectral behavior of  $\cG_{n,p}$ resembles  that of standard Wigner matrices. However, this  ``dense'' phenomenon completely breaks down in the regime of {constant} average degree (i.e.~$p=\frac{d}{n}$ with $d$ 
 \emph{fixed}). In this sparsity regime, the spectral behavior is heavily affected by the geometry of the graph. For instance, the largest eigenvalue is determined by a vertex of maximum degree 
(see \cite{MR1967486} for details).
 
Spectrum of the Erd\H{o}s-R\'{e}nyi  graph induced with random edge-weights, or equivalently sparse Wigner matrices, has also attracted an immense interest. In the case of \emph{weighted} $\cG_{n,p}$ with dense $p$, 
the  spectral density
converges to the semicircle law \cite{rodgers1988density, KKPS89} and
the largest eigenvalue lies near the edge of the support of the semicircle distribution \cite{MR1825319}.
We also refer to the recent works \cite{bbk_dense, tikhomirov20} where some of the  assumptions in \cite{MR1825319} are relaxed.
Furthermore, the edge universality was established   in a dense regime of sparsity \cite{MR3098073, MR2964770, MR3800840, MR4089498, MR4288336}.

However, these results are  only valid when $p \gg \frac{1}{n}$ and break down when $p$ is proportional to $ \frac{1}{n} $. In this case, the classical arguments are not applicable and completely new ideas are needed to analyze the spectral behavior.
Very recently, the largest eigenvalue of such sparse Wigner matrices  is successfully analyzed \cite{GN22, GHN22+} (see \eqref{er} for details). 
Indeed, the largest eigenvalue is determined by the intricate interplay between  the geometry of the underlying Erd\H{o}s-R\'{e}nyi graph and  the value of edge-weights.  

\subsubsection{Random $d$-regular graph} 
The random $d$-regular graph is another fundamental model of random graphs. However,
due to dependency between edges, it is much more delicate to analyze its spectral properties.  The celebrated Kesten–McKay law describes the limiting (macroscopic) behavior of bulk eigenvalues.  
Regarding the edge spectrum, 
the second largest eigenvalue and the spectral gap have a significant importance due to its wide applications in graph theory and computer science \cite{broder1987second, MR2869010, bilu2006lifts}. 
By a series of important works \cite{Alon86, MR1124768, MR1208809, Friedman08, Bordenave20}, random $d$-regular graphs (with \emph{fixed} $d\ge 3$) are known to  have the second largest eigenvalue close to $2\sqrt{d-1}$ with high probability. A finer behavior around this value, conjectured to exhibit the Tracy-Widom distribution, is still open.  
{Very recently, the conjectured Tracy-Widom fluctuation is verified for certain dense regimes, i.e.~$d\rightarrow \infty$ as $n\to \infty$ \cite{he2022spectral, HY23+}.}

In the context of eigenvectors, there has been 
extensive interest in 
verifying the 
delocalization of eigenvectors 
of random $d$-regular graphs, in the context of quantum ergodicity \cite{MR3038543}.  
Recently,  for any fixed  $d \geq 3$, all eigenvectors  are shown to exhibit a complete delocalization  \cite{BHY19}.
It is crucial to note that this delocalization  result breaks down for the  Erd\H{o}s-R\'{e}nyi graph  in the regime of constant average degree, since in this case the largest eigenvalue is governed by a subgraph induced by an atypically large degree vertex and thus expected to exhibit a localization. 

As in the case of  Erd\H{o}s-R\'{e}nyi graphs,  a \emph{weighted} version of the  random $d$-regular graph is a natural model of consideration. This can be used as a mathematical model of synaptic matrices in neuroscience \cite{sompolinsky1988chaos, rajan2006eigenvalue, MR3403052}.
However, once edge-weights are induced, to the best of authors' knowledge, only few spectral results are known \cite{rrgweighted,bethe,MR3686490}.
In  \cite{rrgweighted}, for fixed $d$, authors showed the existence of the limiting spectral density for a large class of edge-weight distributions (see also \cite{bethe} for the Lifshitz tail behavior  on the Bethe lattice with a {bounded} disorder). Also in \cite{MR3686490}, the spectral density of weighted random $d$-regular ``directed'' graphs is shown to be asymptotically the circular law in a ``dense'' regime. Regarding the edge spectrum, no information was known so far.

\subsection{Notations}
Throughout the paper, the
letter $C$ denote a positive constant, whose values may change from line to line in the proofs.
Symbols $O(\cdot)$ and $o(\cdot)$  denote the standard big-$O$ and little-$o$ notation. For nonnegative ($n$-dependent) quantities $f_{n}$ and $g_{n}$, we write $f_{n}\lesssim g_{n}$ if there exists a constant $C>0$ such that $f_{n}\le Cg_{n}$ for all $n\in \N$. We say $f_{n}\asymp g_{n}$ if  $f_{n}\lesssim g_{n}$ and $g_{n}\lesssim f_{n}$.
{For $n\in\mathbb{N}\coloneqq\{1,2,\cdots\}$, we use the notation $[n]\coloneqq\{1,2,\cdots,n\}$ for brevity.}
Finally we denote by $G = (V,E,Y)$ a network having an underlying graph $G = (V,E)$ (we abuse the notation) with $Y$ as  a conductance matrix.

\subsection{Idea of proof}\label{idea}
$\empty$\\
\emph{Lower bound on the largest eigenvalue}
We recall the classical fact that random $d$-regular graphs possess a  locally tree-like structure, i.e., $\Theta(\log n)$-neighborhood  of the most of vertices is a $d$-regular tree.  We  lower bound the largest eigenvalue by inducing high value edge-weights  on these  $d$-regular  trees that are vertex-disjoint. More precisely, for each such $d$-regular tree $T$, we take edge-weights in an optimal way so that $\lambda_1(T)$ becomes large with the smallest probability cost. This  probability cost is related to the variation problem \eqref{h}.

~

\noindent
\emph{Upper bound on the largest eigenvalue}
At the high level, we analyze two parts of the network decomposed according to the value of edge-weights. 
Since the random $d$-regular graph has a uniformly bounded spectral norm, the network arising from small edge-weights has a  negligible contribution. In addition, the network restricted to high edge-weights becomes 
sparser, 
and thus it is decomposed
into 
relatively small connected 
components. This idea has been used  in \cite{GN22,GHN22+} to study the largest eigenvalue of weighted  Erd\H{o}s-R\'{e}nyi graphs (i.e.~sparse Wigner matrices). However, since the underlying graph considered in this paper is a random $d$-regular graph which possesses a  strong dependency between entries, additional difficulties appear in the analysis. Also,  due to the special degree-structure of random $d$-regular graphs,  the governing spectral mechanism  is significantly different from the case  of  Erd\H{o}s-R\'{e}nyi graphs.

~

We now elaborate on this in more details. For a suitable sequence $\{b_n\}_{n\geq 1}$ slowly growing in $n$, we truncate edge-weights $W_{ij} $ as follows:
\begin{equation*}
W_{ij}^{(1)} \coloneqq W_{ij}\1_{|W_{ij}|>b_n^{1/\alpha}}, \quad
W_{ij}^{(2)} \coloneqq W_{ij}\1_{|W_{ij}|\le b_n^{1/\alpha}}.
\end{equation*}
This yields a decomposition of the underlying  random $d$-regular graph $A = A^{(1)} + A^{(2)}$ as
\begin{equation*}
A_{ij}^{(1)} = A_{ij}\1_{|W_{ij}|>b_n^{1/\alpha}} , \quad
A_{ij}^{(2)} = A_{ij}\1_{|W_{ij}|\le b_n^{1/\alpha}},
\end{equation*}
and a decomposition of the corresponding network $X=X^{(1)}+X^{(2)}$ as
\begin{equation*}
X^{(1)}_{ij} = A_{ij}^{(1)}W_{ij}, \quad X^{(2)}_{ij} = A_{ij}^{(2)}W_{ij}.
\end{equation*}
Since the largest eigenvalue of the random $d$-regular graph $A$ is $d$, if the truncation parameter $b_n$ satisfies $b_n \ll \log n$, then
  the network $X^{(2)} = A ^{(2)}\odot W   = A \odot W ^{(2)}$ is spectrally negligible. 

Next, note that given  $A^{(1)}$, edge-weights on the network $X^{(1)}$ are i.i.d.~Weibull distributions conditioned to be greater than $b_n^{1/\alpha}$ in absolute value.  In addition,
$A^{(1)}$  can be regarded as a (bond) percolation with a connectivity probability
\begin{align} \label{p}
p=\P(|W_{ij}|>b_n^{1/\alpha})\le  C_2e^{-b_n}
\end{align} 
 on the random $d$-regular graph $A$.  This percolation effect makes the graph $A$ much sparser, and thus the size of every connected component in $A^{(1)}$ becomes relatively small
 with high probability. Indeed, we establish a general fact that the percolation, whose connectivity probability satisfies \eqref{p} with $b_n$ suitably growing in $n$, on any  graph with \emph{uniformly bounded degrees} has a sharp shattering effect (see Lemma \ref{lem: size of connected compo} for details).

We aim to analyze the largest eigenvalue of each component in $A^{(1)}$. A crucial observation is that although the random $d$-regular graph $A$ can contain a number of cycles, every component in $A^{(1)}$ is ``almost'' tree. This is because 
the random $d$-regular graph is \emph{locally} a  $d$-regular tree and the size of every component in $A^{(1)}$  is relatively small, as explained in the previous paragraph. Inspired by this observation, we further decompose $X^{(1)} : = X^{(1,1)}+X^{(1,2)}$, where the former part consists of   vertex-disjoint trees and the latter part consists of tree-excess edges (i.e.~edges in a graph which do not belong to the spanning tree) in the network $X^{(1)}$. Since the number of tree-excess edges is relatively small,  $X^{(1,2)}$ is spectrally negligible.

Hence, it reduces to analyze the network $X^{(1,1)}$, consisting of  vertex-disjoint trees. By the above discussion, the size of each  tree is relatively small
and crucially, the maximum degree is bounded by $d$. We develop a general strategy to   bound the largest eigenvalue  of such tree-network  which we call $T$. The idea is to control the largest eigenvalue in terms of the ($\alpha$th-power of) $\ell^\alpha$-norm  of $T$, i.e.~(twice of) i.i.d.~sum of the $\alpha$th-power of  edge-weights $Y_{ij}$ in $T$, described as follows. By a  variational  formula for the largest eigenvalue,   denoting  by $\textbf{v} = (v_i)_{i\in V(T)}$  the top eigenvector of $T$ with $\norm{\textbf{v}}_2=1$, applying H\"{o}lder's inequality,
\begin{align} \label{144}
\lambda_{1}(T)  =  \sum_{(i,j)\in  \overrightarrow{E\;}(T)} Y_{ij}v_{i}v_{j} \le \Big( \sum_{(i,j)\in \overrightarrow{E\;}(T)} |Y_{ij}|^{\alpha} \Big)^{\frac{1}{\alpha}} \Big(  \sum_{(i,j)\in \overrightarrow{E\;}(T)} |v_{i}v_{j}|^{\beta} \Big)^{\frac{1}{\beta}},
\end{align}
where $\beta$ denotes the H\"{o}lder conjugate of $\alpha$ (i.e.~$\frac{1}{\alpha}+\frac{1}{\beta}=1$) and  $\overrightarrow{E\;}(T)$ denotes the collection of directed edges in $T$ (see \eqref{directed} for a precise definition). Since $T$ is a tree  whose maximum degree is at most $d$, $T$ can be embedded into the    infinite $d$-regular tree  $\mathbb{T}_d$. Hence,  \emph{uniformly} in $T$,  the second quantity  above  is bounded by
\begin{align} \label{145}
\sup_{\textup{\textbf{u}}=(u_i)_{i \in V(\mathbb{T}_d)}, \norm{\textup{\textbf{u}}}_2=1}    \Big(  \sum_{(i,j)\in \overrightarrow{E\;}(\mathbb{T}_d )} |u_{i}u_{j}|^{\beta} \Big)^{\frac{1}{\beta}}  \le C(\beta,d),
\end{align}
where $C(\beta,d)$ is the constant  related to the variational problem  \eqref{h}. Hence,   the inequalities \eqref{144} and \eqref{145} bound the largest eigenvalue in terms of the $\ell^\alpha$-norm of the tree-network.
Note that
a particular choice of the $\ell^\alpha$-norm is motivated by the fact that the i.i.d.~sum of the $\alpha$th-power of the reasonable number of Weibull random variables with a  shape parameter $\alpha$ has a similar tail behavior as that of  the $\alpha$th-power of a single  Weibull random variable.

However, a naive application of \eqref{144} does not work. This is because edge-weights on the graph $A^{(1,1)}$ are i.i.d.~Weibull distributions \emph{conditioned} to be greater than  $ b_n^{1/\alpha}$ in absolute value, which makes the tail of $\ell^\alpha$-norm of the network much heavier.
To remedy 
this problem, inspired by {\cite[Proposition 5.7]{GN22}}, we split the network according 
to the values of $v_i$s (recall that $\textbf{v} = (v_i)_i$ denotes the top eigenvector). We first show 
that the network induced 
by low values of $v_i$s is negligible. This is obtained by  deducing that the second quantity in RHS of \eqref{144} becomes relatively small on this network,   by crucially relying on the uniform boundedness of the maximum  degree.
Hence, it suffices to analyze the network 
arising from   vertices possessing high values of $v_i$s. A crucial aspect is that the size of this network is bounded only in terms of the  truncation value of $v_i$s, not depending on $b_n$. This makes
the aforementioned strategy \eqref{144} and \eqref{145} effectively work.

~

\noindent
\emph{Localization of top eigenvector} Let $\textup{\textbf{f}}=(f_i)_{i\in [n]}$ be a top eigenvector  with $\norm{\textbf{f}}_2=1$ and let us first consider the case $\alpha>2$. Since networks $X^{(2)}$ and  $X^{(1,2)}$ are spectrally negligible, the contribution arising from $X^{(1,1)}$, consisting of  vertex-disjoint trees, is ``almost'' at least $h_d(\alpha) (\log n)^{1/\alpha}.$ 
By the inequality \eqref{144}  applied to each tree-component in $X^{(1,1)}$, we deduce that for any $\e>0$, $\textbf{f}$ is  essentially supported on the components whose $\ell^\alpha$-norm of edge-weights is at least $ (1-\e)(\log n)^{1/\alpha}$. 
By a tail estimate on the $\ell^\alpha$-norm of  edge-weights, 
the number of such components is at most $n^{\alpha \e}$. Since the size of every component in $X^{(1,1)}$  is relatively small, we establish a localization of the top eigenvector $\textbf{f}$ on  $n^{\alpha \e + o(1)}$ vertices.

In the case of edge-weights possessing heavier tails (i.e.~$0<\alpha<2$), one can improve the localization result by analyzing the contribution of  networks in a refined fluctuation scale. To be precise, using the inequality $\lambda_1(X) \geq \max_{(i,j) \in E(A)}| W_{ij}|$,  we obtain that there is a constant $0<\tau<1$ such that with high probability, 
\begin{align*}
    \lambda_1(X) \geq  (\log n)^{1/\alpha}- \e(\log n)^{\tau/\alpha}.
\end{align*}
One can also  control the contribution of   negligible parts of $X$ (i.e.~$X^{(2)}$ and  $X^{(1,2)}$) in the scale $(\log n)^{\tau/\alpha},$ by exploiting the (quantitative) local tree-structure of random $d$-regular graphs. This allows us to deduce that
$\textbf{f}$ is essentially supported on the components whose $\ell^\alpha$-norm is at least $(\log n)^{1/\alpha} - \e^{1/2} (\log n)^{\tau/\alpha}$. 
Note that the number of such components is sub-polynomial. To see the structure of the top eigenvector, for each tree-component $T_k$ in  $X^{(1,1)}$ which mainly contributes to $\lambda_1(X)$, by a bound \eqref{144}, the quantity
\begin{align*}
    \Big(  \sum_{(i,j)\in \overrightarrow{E\;}(T_k)} |f_{i}f_{j}|^{\beta} \Big)^{\frac{1}{\beta}}
\end{align*}
is close to the maximum possible value (which is related to variation problem \eqref{h}). In the case $\alpha<2$ (equivalently, $\beta>2$), one can show that this is only possible when $|f_i|^2,|f_j|^2 \approx \norm{\textbf{f}}^2_2/2$ for some vertices $i$ and $j$ connected by an edge. This verifies the localization of the top eigenvector on sub-polynomial vertex-disjoint edges.

\subsection{Organization of paper}
The rest of the paper is structured as follows. In Section \ref{sec2}, we  prove a lower bound for 
the largest eigenvalue. In Section  \ref{sec3}, we present central and  ubiquitous strategies to upper bound the largest eigenvalue. Section \ref{sec4}  contains the proof of a localization  of the top eigenvector. Section \ref{sec5} includes some properties of the LLN constant of the largest eigenvalue. Appendix contains a technical estimate on tails of the sum of i.i.d.~Weibull random variables.

\subsection{Acknowledgement}
KN's research is supported by the National Research Foundation of Korea (NRF-2019R1A5A1028324, NRF-2019R1A6A1A10073887). JL's research is  supported by the Hong Kong Research Grants Council (GRF-16301519, GRF-16301520).  We  thank Charles Bordenave  for pointing out the reference \cite{bethe}. We  also thank Noga Alon and 
 Roland Bauerschmidt  for helpful comments.

\section{Lower bound on the largest eigenvalue}\label{sec2}

In this section, we lower bound the largest eigenvalue of the weighted random $d$-regular graph.

\subsection{Locally tree-like structure}
As mentioned in Section \ref{idea}, we lower bound the largest eigenvalue by constructing $\Theta(n)$ vertex-disjoint  $d$-regular trees  induced with high values of edge-weights. This can be accomplished using the fact that
the random $d$-regular graph is locally a $d$-regular tree.

Before elaborating on this, we introduce some notations. For $i,j\in V(G)$, denote by $\dist(i,j)$ the graph distance between $i$ and $j$, i.e.~the minimum length of a path joining $i$ and $j$ in a graph $G$.
For  $i\in V(G)$ and $R\in\mathbb{N}$, let $B_{R}(i)$, a $R$-neighborhood of  $i$, be a subgraph induced  by  the collection of vertices $\{j\in V(G):\dist(i,j)\le R\}$. 
Next, for a connected graph $G$, denoting by $T$ any spanning tree of $G$, edges in $E(G) \setminus E(T)$ are called \emph{tree-excess} edges. Note that the spanning tree $T$ may not be unique and thus tree-excess edges depend on the choice of the spanning tree. The \emph{excess} of $G$ is defined to be the number of  tree-excess edges, i.e.~$|E(G) \setminus E(T) | = |E(G)| - |V(G)| + 1,$ which does not depend on the choice of the spanning tree.

It is known  (see for example \cite[Proposition 4.1]{BHY19} and \cite{LS10}) that for a logarithmic radius $R_n := \lfloor c \log n \rfloor$ ($c>0$ is a constant), all $R_n$-neighborhoods of the random $d$-regular tree have a small number of tree-excess edges and most of them (except polynomially many) are trees. The following proposition is a version of  \cite[Proposition 4.1]{BHY19},  stated for general values of the radius $R_n$. This will be crucially used to  bound the largest eigenvalue in a sharp quantitative sense.
 
\begin{proposition}\label{prop: almost tree nbhd}
Let $G$ be a random $d$-regular graph with $|V(G)|=n$. Let $w\in \N$ be a constant and $\{R_n\}_{n\ge 1}$ be a sequence of positive integers such that $1\ll R_n \le 0.9\log_{d-1}n$.
 Define the events
 \begin{align}
\mathcal{E}_{1,1}:=\{\text{For all $i\in V(G)$,  the excess of $B_{R_n}(i)$ is at most $\omega$}\}
 \end{align}
and
 \begin{align}
  \mathcal{E}_{1,2}:=\{\text{$\size{\parenn{i\in V(G):\text{$B_{R_n}(i)$ contains a cycle}}}\le (d-1)^{4R_{n}}$}\}.
 \end{align}
 Then,   for sufficiently large $n$, 
 \begin{align}
     \P(\mathcal{E}_{1,1}^c) \le C n^{-w} (d-1)^{2R_n(w+1)}
 \end{align}
 and  there exists a constant $c>0$ such that
  \begin{align} \label{114}
     \P(\mathcal{E}_{1,2}^c) \le dn\exp(-c(d-1)^{2R_n}).
 \end{align}
Further assume that $\log\log n\ll R_n  \ll \log n.$  Then, defining the event   $\mathcal{E}_1 \coloneqq \mathcal{E}_{1,1} \cap \mathcal{E}_{1,2}$, for any  constant $\e>0$, for sufficiently large $n$,
 \begin{align} \label{final}
     \P(\mathcal{E}_{1}^{c})  \le n^{-w+\e}.
 \end{align}
\end{proposition}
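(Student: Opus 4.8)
The plan is to carry everything out in the configuration (pairing) model on the $nd$ half-edges and to transfer all estimates to $\cM_{n,d}$ using the classical fact that, for fixed $d$, a uniformly random pairing is a simple graph with probability at least some constant $c_d>0$, so that any event of pairing-probability $p$ has $\cM_{n,d}$-probability at most $p/c_d$. Both pointwise bounds will come from analysing the breadth-first exploration of a ball $B_{R_n}(i)$, revealing the pairing one half-edge at a time, and \eqref{final} will then be pure arithmetic once the hypotheses $\log\log n\ll R_n\ll\log n$ are imposed. I expect the exponential tail bound for the number of short cycles (needed for the second estimate) to be the only genuinely non-routine ingredient.

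\emph{Excess bound.} First I would fix $i$ and run BFS from $i$, pairing every half-edge incident to a vertex already in the exploration, so that every edge of $B_{R_n}(i)$ gets revealed. Since $R_n\le 0.9\log_{d-1}n$, the number $N_n$ of half-edges touched is $\asymp(d-1)^{R_n}\le n^{0.9}=o(n)$, so throughout the exploration the number of still-free half-edges stays $\asymp nd$. Each time a half-edge is paired, the conditional probability that its partner belongs to a vertex already present --- which is the only way to create a tree-excess edge of $B_{R_n}(i)$ --- is at most $C(d-1)^{R_n}/n$. Hence $\P(\text{excess of }B_{R_n}(i)\ge\omega+1)\le\binom{N_n}{\omega+1}\bigl(C(d-1)^{R_n}/n\bigr)^{\omega+1}\le\bigl(C'(d-1)^{2R_n}/n\bigr)^{\omega+1}$, and a union bound over $i\in[n]$ gives $\P(\mathcal{E}_{1,1}^c)\le Cn^{-\omega}(d-1)^{2R_n(\omega+1)}$; taking $\omega=0$ also records $\P(B_{R_n}(i)\text{ is not a tree})\le C(d-1)^{2R_n}/n$.

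\emph{Cyclic-neighbourhood bound.} The first step here is geometric: if $B_{R_n}(i)$ is not a tree then, being connected, it contains the fundamental cycle of some edge outside a fixed spanning tree, and that cycle has length at most $2R_n+1$ and lies within distance $R_n$ of $i$. Writing $Z$ for the number of cycles of length at most $2R_n+1$ in the graph, it follows that $\#\{i:B_{R_n}(i)\text{ is not a tree}\}\le C R_n(d-1)^{R_n}\,Z$, so $\mathcal{E}_{1,2}^c\subseteq\{Z\ge t\}$ for a threshold $t\asymp(d-1)^{3R_n}/R_n$. Since $\E Z\le\sum_{\ell=3}^{2R_n+1}(d-1)^\ell\asymp(d-1)^{2R_n}$ is far below $t$, I would then invoke a Poisson-type upper-tail estimate for the number of short cycles in the pairing model to bound $\P(Z\ge t)\le\exp(-c(d-1)^{2R_n})$; the crude prefactor $dn$ in \eqref{114} absorbs remaining union-bound losses and the transfer to the simple graph. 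This is the hard part: the exploration computations above are bookkeeping, but passing from $\E Z$ to an exponential deviation bound requires controlling all the factorial moments $\E[(Z)_k]\lesssim(\E Z)^k$ (or a switching / Chen--Stein argument) and checking this remains valid for cycle lengths growing like $R_n$.

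\emph{Conclusion.} Finally, to obtain \eqref{final} I would add the two bounds. Since $R_n\ll\log n$ and $\omega$ is a fixed constant, $(d-1)^{2R_n(\omega+1)}=e^{o(\log n)}=n^{o(1)}$, so $\P(\mathcal{E}_{1,1}^c)\le Cn^{-\omega+o(1)}\le\tfrac12 n^{-\omega+\e}$ for $n$ large. Since $R_n\gg\log\log n$ we have $2R_n\log(d-1)-\log\log n\to\infty$, i.e.~$(d-1)^{2R_n}/\log n\to\infty$, so eventually $c(d-1)^{2R_n}\ge(\omega+1)\log n+\log(2d)$ and hence $\P(\mathcal{E}_{1,2}^c)\le dn\,e^{-c(d-1)^{2R_n}}\le\tfrac12 n^{-\omega+\e}$. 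Summing gives $\P(\mathcal{E}_1^c)\le n^{-\omega+\e}$. The argument is robust: any bound of the form $e^{-(d-1)^{\delta R_n}}$ with $\delta>0$ in place of \eqref{114} would suffice, since $(d-1)^{\delta R_n}\gg\log n$ already holds when $R_n\gg\log\log n$.
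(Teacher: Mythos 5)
Your proposal is correct and follows essentially the same route as the paper: a configuration-model exploration giving a binomial-type bound on the excess for $\mathcal{E}_{1,1}$ (the paper imports this as stochastic domination from \cite{LS10}), a geometric reduction of $\mathcal{E}_{1,2}^c$ to a tail bound on the number of short cycles, and the same elementary arithmetic to conclude \eqref{final}. The one step you flag as genuinely hard---an exponential upper-tail estimate for short-cycle counts at lengths growing like $R_n$---is exactly what the paper imports wholesale from McKay--Wormald--Wysocka \cite[Theorem 4]{MWW04}, which yields $\P(\mathcal{E}_{1,2}^c)\le dn\exp(-c(d-1)^{2R_n})$ without needing to redo the factorial-moment computation.
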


Since Proposition \ref{prop: almost tree nbhd} can be immediately obtained by the argument of \cite[Proposition 4.1]{BHY19}, we provide a proof in the appendix.
As $B_R(i)$ is always connected, $B_R(i)$ not containing a cycle is equivalent to   $B_R(i)$ being a tree. Noting that every vertex has a degree $d$, this is further equivalent to the fact that $B_R(i)$ is a $d$-regular tree of depth $R$\footnote{A tree is said to be a $d$-regular tree of depth $L$ (with a distinguished vertex called as $\textsf{root}$) if all vertices except leaves have degree $d$ and every leaf $v$ satisfies $\dist(\textsf{root} ,v) = L$.} with a root $i$.

Proposition \ref{prop: almost tree nbhd} motivates to introduce a version of the quantity $h_d$ (defined in \eqref{h}) for a $d$-regular tree of ``finite'' depth, which will be done in the next section. 

\subsection{Finitary version of \eqref{h}}
We define a version of the variational problem  \eqref{h}  on the $d$-regular tree of \emph{finite} depth.
For any graph $G$, let
\begin{equation} \label{directed}
\overrightarrow{E\;}(G) \coloneqq \{ (i,j)\in V(G)\times V(G) : \text{vertices $i$ and $j$ are connected by an edge} \}
\end{equation}
be a collection of directed edges. Note that  $|\overrightarrow{E\;}(G)| = 2|E(G)|.$

For $L\in\mathbb{N}$, we denote by  $\mathbb{T}_{d}^{(L)}$  the $d$-regular tree of depth $L$. 
For $\gamma>0$,  define {
\begin{equation}\label{eq: C_alpha,L}
K^{(L)}_d(\gamma) \coloneqq
\sup_{\textbf{u}=(u_i)_{i\in V(\mathbb{T}_{d}^{(L)})}, u_i\geq 0, \lVert \textbf{u} \rVert_1 = 1}
\Big( \sum_{ (i,j)\in \overrightarrow{E\;}(\mathbb{T}_{d}^{(L)}) } u_{i}^{\gamma} u_{j}^{\gamma} \Big)^{\frac{1}{2\gamma}}.
\end{equation}} 
Since $\{K^{(L)}_d(\gamma) \}_{L\in\mathbb{N}}$ is   non-decreasing in $L$, one can define
\begin{equation}\label{eq: C_alpha}
K_d(\gamma) \coloneqq \lim_{L\to\infty} K_d^{(L)}(\gamma)
\end{equation}
(which a priori can be infinity).
Equivalently,
\begin{equation}  \label{k}
K_d(\gamma) := 
\sup_{\textbf{u}=(u_i)_{i\in V(\mathbb{T}_{d})}, u_i\geq 0,\lVert \textbf{u} \rVert_1 = 1}
\Big( \sum_{ (i,j)\in \overrightarrow{E\;}(\mathbb{T}_{d}) } u_{i}^{\gamma} u_{j}^{\gamma} \Big)^{\frac{1}{2\gamma}}.
\end{equation}
The quantities $h_d$ (defined in \eqref{h}) and  $K_d$ are related as follows: For  $\alpha>2$,  denoting  by $\beta$ the conjugate of $\alpha$, i.e.~$\frac{1}{\alpha}+ \frac{1}{\beta}=1$,  
\begin{align} \label{relation}
    h_d(\alpha) = 2^{1/\alpha} K_d (\beta/2).
\end{align}

Note that  for any $\gamma \ge \frac{1}{2}$, the quantity $K_d(\gamma)$ is finite. Indeed, since $u_{i}\in [0,1]$ in the variational problem \eqref{k}, 
    \begin{equation} \label{112}
        \sum_{ (i,j)\in \overrightarrow{E\;}(\mathbb{T}_{d}) } u_{i}^{\gamma} u_{j}^{\gamma} \le \sum_{ (i,j)\in \overrightarrow{E\;}(\mathbb{T}_{d}) } \sqrt{u_{i} u_{j}}.
    \end{equation}
    The RHS above is bounded by the spectral norm of the infinity $d$-regular tree $\mathbb{T}_{d}$, which is finite (in particular, $2\sqrt{d-1}$). See the discussion below \eqref{infinite} for details.

Although there is no exact  formula of the quantity $K_d(\gamma)$ for general values of  $\gamma,$  one can exactly solve  the variational problem \eqref{k} when $\gamma \ge 1.$
\begin{lemma} \label{formula k}
    Let  $d\ge 3$ be any integer. Then, for any $L\in \N$ and $\gamma \ge 1$,  $K_d ^{(L)}(\gamma)=2^{ \frac{1}{2\gamma}-1 }$, implying that $K_d (\gamma)=2^{ \frac{1}{2\gamma}-1 }$.
    If in particular $\gamma>1$, then  the maximizer in \eqref{eq: C_alpha,L} (or \eqref{k}) is only obtained when $u_i=u_j = 1/2$ (other $u_k$s are zero) for some vertices $i$ and $j $ connected by an edge. 
\end{lemma}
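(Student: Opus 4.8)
The plan is to reduce the variational problem \eqref{eq: C_alpha,L} to two elementary inequalities. Since $\overrightarrow{E\;}(\mathbb{T}_d^{(L)})$ lists each undirected edge twice, for any admissible $\textbf{u}$ (i.e. $u_i\ge 0$, $\norm{\textbf{u}}_1=1$) one has $\sum_{(i,j)\in \overrightarrow{E\;}(\mathbb{T}_d^{(L)})} u_i^{\gamma} u_j^{\gamma} = 2\sum_{\{i,j\}\in E(\mathbb{T}_d^{(L)})}(u_iu_j)^{\gamma}$, so it suffices to show that $S_\gamma(\textbf{u}):=\sum_{\{i,j\}\in E(\mathbb{T}_d^{(L)})}(u_iu_j)^{\gamma}\le 4^{-\gamma}$, with equality (when $\gamma>1$) exactly for the half-and-half mass on the two endpoints of a single edge.

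For the upper bound I would first compare $S_\gamma$ with $S_1^{\gamma}$: writing $s:=S_1(\textbf{u})=\sum_{\{i,j\}}u_iu_j$, every summand satisfies $u_iu_j\le s$, hence $(u_iu_j)^{\gamma}\le s^{\gamma-1}(u_iu_j)$ because $\gamma\ge 1$, and summing over edges gives $S_\gamma(\textbf{u})\le s^{\gamma}$. Next I would bound $s\le 1/4$ using that $\mathbb{T}_d^{(L)}$ is a tree, hence bipartite: letting $V_1,V_2$ be the colour classes (vertices at even, resp. odd, distance from the root), every edge joins $V_1$ to $V_2$, so $s\le\bigl(\sum_{i\in V_1}u_i\bigr)\bigl(\sum_{j\in V_2}u_j\bigr)\le\frac14\bigl(\sum_i u_i\bigr)^2=\frac14$ by AM--GM. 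Combining, $S_\gamma(\textbf{u})\le 4^{-\gamma}$, so $K_d^{(L)}(\gamma)\le(2\cdot 4^{-\gamma})^{1/(2\gamma)}=2^{\frac{1}{2\gamma}-1}$. The matching lower bound is immediate: since $L\ge 1$ the tree has an edge $\{i_0,j_0\}$, and the vector with $u_{i_0}=u_{j_0}=1/2$ and all other coordinates $0$ is admissible and attains $2^{\frac{1}{2\gamma}-1}$. This gives $K_d^{(L)}(\gamma)=2^{\frac{1}{2\gamma}-1}$ for every $L$, and then $K_d(\gamma)=2^{\frac{1}{2\gamma}-1}$ by \eqref{eq: C_alpha}.

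For the uniqueness statement when $\gamma>1$, I would first note a maximizer exists (for finite $L$ the objective is continuous on the compact simplex $\{u_i\ge 0,\ \norm{\textbf{u}}_1=1\}$, and for $\mathbb{T}_d$ the explicit $\textbf{u}$ above attains the value just shown to be an upper bound). At a maximizer the whole chain is an equality: tightness of $(u_iu_j)^{\gamma}\le s^{\gamma-1}(u_iu_j)$ on every edge forces, using $\gamma>1$, each $u_iu_j\in\{0,s\}$, and since $s=1/4>0$ there is exactly one edge $e_0=\{i_0,j_0\}$ with $u_{i_0}u_{j_0}=s=1/4$ and $u_iu_j=0$ on all other edges. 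Then $u_{i_0}+u_{j_0}\le\norm{\textbf{u}}_1=1$ together with $u_{i_0}u_{j_0}\le\frac14(u_{i_0}+u_{j_0})^2$ forces $u_{i_0}+u_{j_0}=1$ and $u_{i_0}=u_{j_0}$, i.e. $u_{i_0}=u_{j_0}=1/2$, after which the $\ell^1$ constraint kills all remaining coordinates; the converse was checked above. The argument is essentially elementary, and the only step requiring care is this equality bookkeeping — in particular verifying that a maximizer places no mass off the single active edge — together with the observation that for the infinite tree $K_d(\gamma)$ the loss of compactness is harmless, since the upper bound needs no compactness and an explicit maximizer is available.
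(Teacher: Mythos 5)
Your proof is correct, and it reaches the same constant by a mildly different route than the paper. The paper (Lemma \ref{lem: pf of lem 2.1} and Remark \ref{remark53}) bounds the $\gamma$-powered sum directly: it splits $V(\mathbb{T}_d^{(L)})$ into the two colour classes $V_1,V_2$, dominates $\sum_{(i,j)\in E}u_i^\gamma u_j^\gamma$ by $(\sum_{V_1}u_i^\gamma)(\sum_{V_2}u_j^\gamma)$, and then uses superadditivity $\sum_k x_k^\gamma\le(\sum_k x_k)^\gamma$ within each class before applying AM--GM. You instead first linearize the exponent via the pointwise bound $(u_iu_j)^\gamma\le s^{\gamma-1}u_iu_j$ with $s=\sum_{\{i,j\}}u_iu_j$, reducing everything to the $\gamma=1$ statement $s\le 1/4$, which you then prove by the same bipartite-plus-AM--GM argument. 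Both proofs use bipartiteness in an essential way (as they must: a triangle with equal weights gives $s=1/3$), so neither is more general there, but your reduction step $S_\gamma\le S_1^\gamma$ is graph-independent and your equality analysis is more explicit than the paper's: the paper only asserts uniqueness in Remark \ref{remark53} as a byproduct of its chain of inequalities, whereas you track the equality cases concretely — strictness of $(u_iu_j)^{\gamma-1}<s^{\gamma-1}$ for $0<u_iu_j<s$ forces all the $\ell^1$-mass onto a single edge, and the AM--GM equality case then pins down $u_{i_0}=u_{j_0}=1/2$. The one step worth double-checking, which you did handle, is that a maximizer necessarily has $s=1/4$ (so that ``$u_iu_j\in\{0,s\}$ with $s>0$'' really localizes to exactly one edge); this follows since equality must hold in both links of the chain $S_\gamma\le s^\gamma\le 4^{-\gamma}$. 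No gaps.
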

Note that in the case $\gamma=1$,  there are other types of the maximizer. For example, denoting by $0$ and $\{1,2,\cdots,d\}$ the root and its children respectively, any vector $\textbf{u}=(u_i)_{i}$ with $u_0=1/2$ and $u_1+\cdots+u_d=1/2$ (other $u_k$s are zero)  attains the maximum as well. In addition,  one can  indeed show that 
for any tree $T$ and $\gamma \ge 1$,
  \begin{align} \label{general}
        \sup_{ \textbf{u} = (u_i)_{i\in V(T)}, u_i\ge 0,   \norm{\textbf{u}}_1=1} \Big( \sum_{ (i,j)\in \overrightarrow{E}(T) } u_{i}^\gamma u_{j}^\gamma \Big)^{\frac{1}{2\gamma}} =  2^{ \frac{1}{2\gamma}-1 }
    \end{align}
 (see Remark \ref{remark53} for explanations).
 
~

The following lemma illustrates the structure of the ``near''-maximizer of the above variational problem  in a quantitative sense,  in the case $\gamma>1.$ This will play a crucial role when showing the  localization of the top eigenvector on the relatively small number of vertex-disjoint edges, in the case $0<\alpha<2$ (see Theorem \ref{thm: localization} for a precise statement).

\begin{lemma} \label{lemma 4.1}
Let $\gamma>1$. Then, there exists a constant $c = c(\gamma)>0$ such that the   following statement holds:  Let $T=(V,E)$ be any tree. For any small enough constant $\e>0$ (depending only on $\gamma$), if the vector  $\mathbf{u}=(u_{i})_{i \in V}$ with $\lVert \mathbf{u} \rVert_{1}=1$ and $u_i \ge 0$ satisfies
\begin{equation} \label{652}
\Big(\sum_{(i,j)\in \overrightarrow{E\;} } u_{i} ^{\gamma}u_{j} ^{\gamma}\Big)^{\frac{1}{2\gamma}}  \ge  (1-\e)\cdot 2^{ \frac{1}{2\gamma}-1 },
\end{equation}
 then there exist vertices $i_0$ and $j_0$ connected by an edge such that
 \begin{equation*}
  u_{i_0}, u_{j_0}  \ge  \frac{1}{2} -c {\sqrt{\ep}}.
\end{equation*}
\end{lemma}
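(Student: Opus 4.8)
The plan is to exploit that a tree is bipartite, together with the super-additivity of $t\mapsto t^{\gamma}$ (valid for $\gamma\ge 1$), to show that the only way for the normalized edge-energy $\sum_{(i,j)\in\overrightarrow{E\;}}u_i^{\gamma}u_j^{\gamma}$ to come within $\e$ of its maximum (when $\gamma>1$) is that essentially all of the $\ell^{1}$-mass of $\mathbf{u}$ concentrates on two adjacent vertices. Write $V=A\sqcup B$ for the bipartition of $T$, so every edge joins $A$ and $B$ (we may assume $E\neq\emptyset$, since otherwise the left-hand side of \eqref{652} vanishes), and set $x:=\sum_{i\in A}u_i$, $y:=\sum_{j\in B}u_j$, so $x+y=1$. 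The first step is a \emph{balanced-bipartition} estimate: for each $i\in A$, super-additivity gives $\sum_{j\sim i}u_j^{\gamma}\le\bigl(\sum_{j\sim i}u_j\bigr)^{\gamma}\le y^{\gamma}$, whence, summing over undirected edges,
\[
S:=\sum_{\{i,j\}\in E}u_i^{\gamma}u_j^{\gamma}=\sum_{i\in A}u_i^{\gamma}\sum_{j\sim i}u_j^{\gamma}\le y^{\gamma}\sum_{i\in A}u_i^{\gamma}\le (xy)^{\gamma}.
\]
Since $\sum_{(i,j)\in\overrightarrow{E\;}}u_i^{\gamma}u_j^{\gamma}=2S$, condition \eqref{652} forces $(xy)^{\gamma}\ge(1-\e)^{2\gamma}2^{-2\gamma}$, i.e.\ $xy\ge(1-\e)^{2}/4$, and together with $x+y=1$ this pins both $x$ and $y$ to within $\sqrt{\e}$ of $\tfrac12$ (for $\e$ small, depending on nothing). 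I would reprove this rather than quote \eqref{general}, because the intermediate bounds $S\le y^{\gamma}\sum_{i\in A}u_i^{\gamma}\le(xy)^{\gamma}$ are reused below.

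Next I would locate one dominant vertex on each side. Let $i_0\in A$ attain $M:=\max_{i\in A}u_i$. Since $\gamma>1$, $\sum_{i\in A}u_i^{\gamma}\le M^{\gamma-1}\sum_{i\in A}u_i=M^{\gamma-1}x$; combining with $S\le y^{\gamma}\sum_{i\in A}u_i^{\gamma}$, $S\ge(1-\e)^{2\gamma}2^{-2\gamma}$, $y\le\tfrac12$ and $x\le\tfrac12+\sqrt{\e}$ gives $M^{\gamma-1}\ge 2^{1-\gamma}(1-c_1\sqrt{\e})$, hence $M\ge\tfrac12(1-c_1\sqrt{\e})^{1/(\gamma-1)}\ge\tfrac12-c\sqrt{\e}$ by Bernoulli's inequality (this is exactly where $\gamma>1$ is essential: at $\gamma=1$ the exponent $\gamma-1$ collapses and no such localization holds, consistent with the remark after Lemma \ref{formula k}). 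Then I would isolate the edge: split $S=u_{i_0}^{\gamma}\sum_{j\sim i_0}u_j^{\gamma}+\sum_{i\in A\setminus\{i_0\}}u_i^{\gamma}\sum_{j\sim i}u_j^{\gamma}$ and bound the second sum by $y^{\gamma}\bigl(\sum_{i\in A\setminus\{i_0\}}u_i\bigr)^{\gamma}=y^{\gamma}(x-M)^{\gamma}$, which is $O(\e^{\gamma/2})=O(\sqrt{\e})$ because $x-M\le(1+c)\sqrt{\e}$ and $\gamma>1$. As $u_{i_0}\le\tfrac12+\sqrt{\e}$ too, this leaves $\sum_{j\sim i_0}u_j^{\gamma}\ge 2^{-\gamma}(1-c_2\sqrt{\e})$. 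Finally, letting $j_0\sim i_0$ attain $N:=\max_{j\sim i_0}u_j$, the bound $\sum_{j\sim i_0}u_j^{\gamma}\le N^{\gamma-1}\sum_{j\sim i_0}u_j\le N^{\gamma-1}y\le N^{\gamma-1}/2$ gives $N^{\gamma-1}\ge 2^{1-\gamma}(1-c_2\sqrt{\e})$, hence $N\ge\tfrac12-c\sqrt{\e}$ as in the previous step. Taking $c=c(\gamma)$ to be the maximum of the finitely many constants that appear, $i_0$ and $j_0$ are adjacent and $u_{i_0},u_{j_0}\ge\tfrac12-c\sqrt{\e}$, as required.

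The hard part is not conceptual but the error bookkeeping: carrying the slack $\e$ cleanly through the chain, and repeatedly converting $\ell^{\gamma}$-control into $\ell^{\infty}$-control via $\sum u^{\gamma}\le(\max u)^{\gamma-1}\sum u$ and $(1-t)^{1/(\gamma-1)}\ge 1-\max\bigl(1,(\gamma-1)^{-1}\bigr)t$. The sole genuinely structural input is that a tree is bipartite, so that the edge-energy is controlled by $(xy)^{\gamma}$ with $x+y=1$; the two power-mean reductions (once on the whole side $A$, once on the neighbourhood of $i_0$) then cascade this single balance constraint down to one edge, and each of these two reductions crucially uses $\gamma>1$.
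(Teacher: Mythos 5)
Your proof is correct, and it departs from the paper's in two instructive ways, so it is worth spelling out the comparison. Both arguments open identically: exploit the bipartition $V=A\sqcup B$ of the tree (the paper uses parity of distance from a fixed vertex) and the super-additivity of $t\mapsto t^{\gamma}$ to land on the balance constraint $\big|\sum_{A}u_i-\tfrac12\big|,\big|\sum_{B}u_j-\tfrac12\big|\le\sqrt{\e}$. After that the two proofs diverge. The paper first deduces the lower bound $\sum_{V_1}u_i^{\gamma}>(\tfrac12-\sqrt{\e})^{\gamma}$ and then proves a standalone rearrangement claim (equation \eqref{65} in the paper) via the inductive inequality $x^{\gamma}+y^{\gamma}\le(x+z)^{\gamma}+(y-z)^{\gamma}$, which says that the constrained $\ell^{\gamma}$-sum is maximized by pushing mass onto a single coordinate; applied to each side this yields a near-$\tfrac12$ vertex $i_0\in V_1$ and $j_0\in V_2$, and then a separate one-line contradiction argument (subtracting $u_{i_0}^{\gamma}u_{j_0}^{\gamma}$ from the product bound) shows that $i_0\sim j_0$. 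You instead replace the rearrangement claim with the more elementary power-mean inequality $\sum u^{\gamma}\le(\max u)^{\gamma-1}\sum u$, which converts $\ell^{\gamma}$-control into $\ell^{\infty}$-control in a single line, and you locate $j_0$ not as the maximizer over all of $B$ but as the maximizer over the \emph{neighborhood} $\{j:j\sim i_0\}$, having first isolated $u_{i_0}^{\gamma}\sum_{j\sim i_0}u_j^{\gamma}$ inside $S$ and shown the remainder is $O(\e^{\gamma/2})$. This makes adjacency of $i_0,j_0$ automatic and eliminates the paper's final contradiction step. Your route is thus a bit shorter and more mechanical; the paper's rearrangement lemma is arguably the sharper tool (it gives the exact extremal configuration) and would be natural if one wanted a quantitative inverse statement with optimal constants, but for the present statement both do the job. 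One small imprecision: you write $y\le\tfrac12$, whereas what the balance step gives is $y\le\tfrac12+\sqrt{\e}$. Either assume WLOG that $B$ is the lighter side (which is legitimate since the conclusion is symmetric in the two sides), or carry $y\le\tfrac12+\sqrt{\e}$ through; in either case the constant $c_1$ merely changes, so this does not affect the argument.
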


Lemmas \ref{formula k} and \ref{lemma 4.1} will be proved in Section \ref{sec5} later.

\subsection{Lower bound on $\lambda_1(X)$}
Equipped with previous preparations, in this section, we  lower bound the largest eigenvalue.
In the following proposition, we provide a stretched exponential bound on the lower tail of the largest eigenvalue. Although Theorem \ref{thm: LLN} is stated for  $\alpha>2$   and $0<\alpha\le 2$ separately,  proofs are different depending on the cases $\alpha>1$ and $0<\alpha\le 1$, and thus we state the proposition accordingly.

\begin{proposition}[Lower tail upper bound]\label{prop: lower tail upper bound}
	Let $L\in\mathbb{N}$ and $\delta\in(0,1)$ be constants. Then, the following statements hold. \\
 1. In the case $\alpha > 1$, denoting by $\beta$ the H\"{o}lder conjugate of $\alpha$, for any small enough $\delta>0,$ 
	\begin{equation*}
	\P\paren{ \lambda_{1}(X) \le (1-\delta) 2^{1/\alpha}  K_d^{(L)} (\beta/2) (\log n)^{1/\alpha }}
\le  \exp(-n^{ 1 - (1-\delta)^{\alpha}+o(1)}).
	\end{equation*}
	2. In the case $0<\alpha\le 1$, 
	\begin{equation*}
	\P\paren{ \lambda_{1}(X) \le (1-\delta)(\log n)^{1/\alpha} }\le  \exp(-n^{ 1 - (1-\delta)^{\alpha}+o(1)}).
	\end{equation*}
\end{proposition}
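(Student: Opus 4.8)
The plan is to lower-bound $\lambda_1(X)$ by producing, inside the random $d$-regular graph $A$, many vertex-disjoint subgraphs on which the edge-weights can be forced to be atypically large at a controlled probabilistic cost, and then to use independence across these subgraphs. For $0<\alpha\le1$ (Part 2) each subgraph is a single edge; for $\alpha>1$ (Part 1) it is an induced copy of $\mathbb{T}_d^{(L)}$. \emph{Part 2.} Testing $X$ against the unit vector $\tfrac1{\sqrt2}(e_i+\sgn(W_{ij})e_j)$ gives the deterministic bound $\lambda_1(X)\ge\max_{\{i,j\}\in E(A)}|W_{ij}|$. Since $|E(A)|=dn/2$ is fixed and $\{W_{ij}\}$ is independent of $A$, conditioning on $A$ and applying Definition \ref{def: weights} (valid since $(1-\delta)(\log n)^{1/\alpha}\ge1$ for large $n$),
\[
\P\bigl(\lambda_1(X)\le(1-\delta)(\log n)^{1/\alpha}\bigr)\le\bigl(1-C_1 n^{-(1-\delta)^{\alpha}}\bigr)^{dn/2}\le\exp\!\bigl(-\tfrac{dC_1}{2}n^{1-(1-\delta)^{\alpha}}\bigr)=\exp\bigl(-n^{1-(1-\delta)^{\alpha}+o(1)}\bigr).
\]
(This also covers $1<\alpha\le2$, where by Lemma \ref{formula k} the Part-1 threshold is again $(1-\delta)(\log n)^{1/\alpha}$.)

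\emph{Part 1, graph input.} Write $\beta$ for the conjugate of $\alpha$, so $\alpha(\beta-1)=\beta$ and $1-\tfrac1\beta=\tfrac1\alpha$. Fix $R_n=\lfloor c\log n\rfloor$ with $c=c(d)>0$ small enough that $\theta:=2c\log(d-1)<\tfrac12$; then $(d-1)^{4R_n}=n^{2\theta+o(1)}=o(n)$, so by \eqref{114} of Proposition \ref{prop: almost tree nbhd}, with probability at least $1-\exp(-n^{\theta+o(1)})$ at most $o(n)$ vertices have a cycle in their $R_n$-ball. Since any sub-ball of a cycle-free $R_n$-ball is (all degrees being $d$) a full $d$-regular tree of depth $L$, a greedy selection of roots at pairwise distance $>2L$ yields, on this event, $m\ge c'n$ vertex-disjoint induced subgraphs $T_1,\dots,T_m$ of $A$ each isomorphic to $\mathbb{T}_d^{(L)}$. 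Fix also a maximizer $\textbf{u}=(u_i)$ of the variational problem \eqref{eq: C_alpha,L} with $\gamma=\beta/2$ (it exists by compactness), and set $v_i:=u_i^{1/2}$, so that $\norm{\textbf{v}}_2=1$, $v_i\ge0$, and $\sum_{(i,j)\in\overrightarrow{E\;}(\mathbb{T}_d^{(L)})}(v_iv_j)^{\beta}=:2S$ with $K_d^{(L)}(\beta/2)=(2S)^{1/\beta}$.

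\emph{Part 1, main estimate.} Pull $\textbf{v}$ back to each $T_k$ by a fixed isomorphism. On the edges $\{i,j\}$ of $T_k$ with $v_iv_j>0$, impose $W_{ij}\ge y_{ij}:=(1-\delta)(\log n)^{1/\alpha}\,S^{-(\beta-1)/\beta}(v_iv_j)^{\beta-1}$. Using $\alpha(\beta-1)=\beta$ one checks $\sum_{\{i,j\}}y_{ij}^{\alpha}=(1-\delta)^{\alpha}\log n$; and since $\textbf{v}\ge0$ and $T_k$ is an induced tree,
\[
\lambda_1(X)\ \ge\ \langle\textbf{v},X\textbf{v}\rangle\ =\ 2\!\!\sum_{\{i,j\}\in E(T_k)}\!\!W_{ij}v_iv_j\ \ge\ 2\sum_{\{i,j\}}y_{ij}v_iv_j\ =\ 2S^{1/\beta}(1-\delta)(\log n)^{1/\alpha}\ =\ 2^{1/\alpha}(1-\delta)K_d^{(L)}(\beta/2)(\log n)^{1/\alpha}
\]
on the event that these inequalities hold for this particular $k$. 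Since $y_{ij}=\Theta((\log n)^{1/\alpha})\ge1$ for large $n$, Definition \ref{def: weights} gives that a fixed $T_k$ satisfies them with probability at least $q:=(C_1/2)^{|E(\mathbb{T}_d^{(L)})|}n^{-(1-\delta)^{\alpha}}$, and these events are independent over $k$ because the $T_k$ are edge-disjoint. Hence, conditionally on $A$ on the good event,
\[
\P\bigl(\lambda_1(X)\le 2^{1/\alpha}(1-\delta)K_d^{(L)}(\beta/2)(\log n)^{1/\alpha}\ \,\big|\,\ A\bigr)\ \le\ (1-q)^{m}\ \le\ e^{-qm}\ \le\ e^{-c'qn}\ =\ \exp\bigl(-n^{1-(1-\delta)^{\alpha}+o(1)}\bigr),
\]
and adding the bad-event probability $\exp(-n^{\theta+o(1)})$ finishes the proof as soon as $\delta$ is small enough that $1-(1-\delta)^{\alpha}<\theta$ (possible since $\theta$ may be taken arbitrarily close to $\tfrac12$ while $1-(1-\delta)^{\alpha}\to0$). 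The harmless gap between $<$ and $\le$ is absorbed by inflating the target by a factor $1+1/\log n$, which changes the cost only by $O(1)$.

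\emph{Main obstacle.} The one genuinely combinatorial ingredient — and the source of the ``$\delta$ small enough'' hypothesis in Part 1 — is the existence, with failure probability small enough (here $\exp(-n^{\theta})$, $\theta<\tfrac12$) to be negligible against the target $\exp(-n^{1-(1-\delta)^{\alpha}})$, of $\Theta(n)$ vertex-disjoint induced copies of $\mathbb{T}_d^{(L)}$ in $A$; this is exactly where $d$-regularity enters. Everything else is a Hölder-extremality computation on a fixed finite tree (the choice $y_{ij}\propto(v_iv_j)^{\beta-1}$ being the equality case of Hölder, which is why the bound lands precisely on $K_d^{(L)}(\beta/2)$) together with an elementary independence/union argument.
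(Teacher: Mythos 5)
Your proof is correct and follows essentially the same route as the paper: reduce Case~2 to $\max_{\{i,j\}\in E(A)}|W_{ij}|$, and for Case~1 use Proposition~\ref{prop: almost tree nbhd} to extract $\Theta(n)$ vertex-disjoint induced copies of $\mathbb{T}_d^{(L)}$, plant edge-weights matched to the maximizer of \eqref{eq: C_alpha,L} at a cost $n^{-(1-\delta)^{\alpha}+o(1)}$ per tree, and invoke conditional independence; your threshold $y_{ij}\propto(v_iv_j)^{\beta-1}$ is algebraically identical to the paper's choice $(\tau v_i^{\beta}v_j^{\beta}\log n)^{1/\alpha}$, and the constraint ``$\delta$ small enough'' plays the same role (making $1-(1-\delta)^{\alpha}$ smaller than the exponent in the $\mathcal{E}_{1,2}^c$ bound). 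The explicit remark that $y_{ij}$ is the H\"older equality case, the observation that for $1<\alpha\le2$ the Case~2 argument already yields Case~1 via Lemma~\ref{formula k}, and the $1+1/\log n$ inflation handling the $<$ vs.\ $\le$ point are harmless refinements not present in the paper's write-up but do not alter the argument.
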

 
\begin{proof}

$\empty$\\
\indent 
\textbf{Case 1.~$\boldsymbol{\alpha > 1}$.} For any constant $L\in\mathbb{N}$, we aim to take an enough number of  vertex-disjoint $d$-regular trees of depth $L$ in  the random $d$-regular graph $A$.
Let $w\coloneqq1$ and $R_n \coloneqq \lfloor \e \log_{d-1} n \rfloor$ ($\e>0$ is a small constant chosen later) in  Proposition \ref{prop: almost tree nbhd}, and consider the event $\cE_{1,2}$.
A vertex $i$ is called \emph{good} 
 if its $R_n$-neighborhood $B_{R_n}(i)$ is a tree, and let $S$ be a collection of good vertices.
 Under the event $\cE_{1,2}$,
 \begin{align} \label{s}
     |S| \geq n- (d-1)^{4R_n} \ge n- n^{4\e}.
 \end{align}  Also, by \eqref{114} in  Proposition \ref{prop: almost tree nbhd}, for  large $n$, 
 \begin{align} \label{621}
     \P(\cE_{1,2}^c) \le  dn\exp(-c(d-1)^{2R_n}) \le  \exp(-n^\e).
 \end{align}
 
 Choose any $i_{1}\in S$ and set $T_{1}\coloneqq B_{L}(i_{1})$. Since $i_1$ is a good vertex and $L<R_n$ for large $n$, $T_1$ is a $d$-regular tree of depth $L$ with a  root $i_{1}$.  Next, take any  $i_{2}\in S \setminus V(B_{2L}(i_{1}))$ and similarly set $T_{2}\coloneqq B_{L}(i_{2})$. Such choice of $i_2$ ensures that $T_1$ and $T_2$ are vertex-disjoint.

We proceed with  this procedure repeatedly: given $k$ chosen vertices $i_1,\cdots,i_k\in S$ such that $T_\ell\coloneqq B_L(i_\ell)$ are vertex-disjoint $d$-regular trees of depth $L$, take any $i_{k+1} \in  S \setminus  \cup_{\ell=1}^k V(B_{2L}(i_\ell) ) $.  Then, $T_{k+1} \coloneqq B_L(i_{k+1})$ is a 
 $d$-regular tree of depth $L$ vertex-disjoint from $T_1,\cdots,T_k.$  Since $|V(B_{2L}(i))| \leq 2d^{2L}$ for any vertex $i$ and recalling \eqref{s},  under the event $\cE_{1,2}$,
one can take  
\begin{equation} \label{ell}
m\coloneqq \left\lfloor \frac{n-n^{4\e}}{2d^{2L}} \right\rfloor
\end{equation}
vertex-disjoint $d$-regular trees  $T_1,T_2,\cdots,T_m $ of depth $L$. Regarding $T_k$ as a tree-network induced with edge-weights $W_{ij}$, vertex-disjointness implies that
\begin{align*}
    \lambda_{1}(X)\ge\max_{1\le k\le m}\lambda_{1}(T_{k}) .
\end{align*}
Since   $\lambda_1(T_k) $s are  conditionally  independent given  $A$,
\begin{align} \label{eq: lower tail ineq 1}
    \P\big( \lambda_{1}(X) & \leq (1-\delta)  2^{1/\alpha} K_d^{(L)} (\beta/2) 
 (\log n)^{1/\alpha } \big) \nonumber \\
    &\leq  \E \big[  \P \big(  \lambda_{1}(X) \leq (1-\delta)  2^{1/\alpha} K_d^{(L)} (\beta/2) 
 (\log n)^{1/\alpha } \mid A \big) \1_{\cE_{1,2}} \big] + \P(\cE_{1,2}^c)    \nonumber \\
&\leq  \E \Big[\prod_{k=1}^m  \P \big(  \lambda_{1}(T_{k})\leq  (1-\delta) 2^{1/\alpha} K_d^{(L)} (\beta/2) 
 (\log n)^{1/\alpha } \mid A \big) \1_{\cE_{1,2}} \Big] +\P(\cE_{1,2}^c)    .
\end{align}

We now  bound $\lambda_1(T_k)$ for each $k$. 
Let $\textbf{u}=(u_i)_{i\in V(T_k)}$ be a vector realizing the supremum in \eqref{eq: C_alpha,L} with $\gamma := \beta/2$ (recall that $T_k$ is isomorphic to $\mathbb{T}_{d}^{(L)}$, a $d$-regular tree of depth $L$).   Then,  the vector $\textbf{v}=(v_i)_{i\in V(T_k)}$ defined by $v_i \coloneqq \sqrt{u_i}$ satisfies $ \norm{\textbf{v}}_2=1$ and
\begin{align} \label{kd}
  K_d^{(L)} (\beta/2) = 
\Big( \sum_{ (i,j)\in \overrightarrow{E\;}(T_k) } v_{i}^{\beta} v_{j}^{\beta} \Big)^{\frac{1}{\beta} }.
\end{align}
Let
$\tau\coloneqq2 (1-\delta)^{\alpha}K_d^{(L)} (\beta/2)^{-\beta }   $.
Conditionally on the event $\cE_{1,2}$, for each $k=1,\cdots,m$, define the event  
\begin{equation*}
\cF_{k}\coloneqq \left\{W_{ij} \ge 
 (\tau    v_{i}^\beta  v_{j}^{\beta}  \log n )^{1/\alpha }  \text{ for all }(i,j)\in E(T_{k}) \right\}.
\end{equation*}
Recalling $ \norm{\textbf{v} }_2=1$, by the variational formula  for the largest eigenvalue (see Lemma \ref{variation} in Appendix), under the event $\cF_{k}$,
\begin{align*}
\lambda_{1}(T_{k})\ge \sum_{(i,j)\in \overrightarrow{E\;}(T_{k})}W_{ij}v_{i}v_{j} &\ge  \tau^{1/\alpha }  \Big( \sum_{(i,j)\in \overrightarrow{E\;}(T_{k})} (v_{i}v_{j})^{1+\frac{\beta}{\alpha}}  \Big) (\log n)^{1/\alpha }  \\
&\overset{\eqref{kd}}{=}\tau^{1/\alpha } K_d^{(L)} (\beta/2)^{\beta } (\log n)^{1/\alpha } = (1-\delta)2^{1/\alpha} K_d^{(L)} (\beta/2) 
 (\log n)^{1/\alpha } ,
\end{align*}
where we used the fact $1+\frac{\beta}{\alpha} = \beta$. In addition,  by  the independence of $W$ and $A$,
\begin{align*}
    \P(\cF_{k}  \mid  A)& = \prod_{(i,j)\in E(T_{k})} \P\paren{ W_{ij}  \ge  (\tau v_{i}^\beta  v_{j}^{\beta}  \log n)^{1/\alpha} {\,|\, A}}\\
&\ge C\exp \Big ( -\tau  \log n  \sum_{(i,j) \in E (T_{k})} v_{i}^\beta  v_{j}^{\beta}   \Big ) \overset{\eqref{kd}}{=}  C\exp(-\tau  K_d^{(L)} (\beta/2)^{\beta }  \log n/2) = n^{- (1-\delta)^\alpha + o(1) } 
\end{align*}
($C = C(C_1,d,L)>0$ is a constant),
where the summation above is taken over undirected edges, which yields a division by 2 in the exponent. 
Hence,  under the event $\cE_{1,2}$,
\begin{equation*}
\P\paren{ \lambda_{1}(T_{k}) \ge (1-\delta) 2^{1/\alpha} K_d^{(L)} (\beta/2) 
 (\log n)^{1/\alpha } \mid A } \ge \P(\mathcal{F}_k \mid A) \ge   n^{- (1-\delta)^\alpha + o(1)}.
\end{equation*}
Applying this  to \eqref{eq: lower tail ineq 1}, recalling $m\asymp n$ for small enough $\e>0$ (say, $\e=1/8$, see \eqref{ell}),
 \begin{align} \label{eq: lower tail finish 1}
    \P\big( \lambda_{1}(X)  \leq (1-\delta)  2^{1/\alpha} K_d^{(L)} (\beta/2) 
 (\log n)^{1/\alpha } \big) \le \exp(-n^{1-(1-\delta)^{\alpha} + o(1)}) + \P(\cE_{1,2}^c)    .
\end{align}
Combining this with \eqref{621}, we  conclude the proof.

~

\textbf{Case 2.~$\boldsymbol{0 < \alpha \le 1}$.} 
Since $X$ is a symmetric matrix {with zero diagonal,} 
\begin{align} \label{large}
    \lambda_{1}(X)\ge \max_{1\leq i<j\leq n} |X_{ij}| =  \max_{(i,j)\in E(A)}|W_{ij}|
\end{align} (see  Lemma \ref{variation2} for details). Thus,  by  the independence of $W$ and $A$, along with  the fact  $|E(A)|=\frac{nd}{2}$,
\begin{align}\label{eq: lower tail finish 2}
    \P\big(\lambda_{1}(X) \leq  (1-\delta) (\log n)^{1/\alpha }  \big) &\le \P\paren{ \max_{(i,j)\in E(A)}|W_{ij}| \leq (1-\delta) (\log n)^{1/\alpha } } \nonumber \\
&\le \big( 1 - C_{1} n^{-(1-\delta)^{\alpha}} \big)^{\frac{nd}{2}} = \exp(-n^{ 1 - (1-\delta)^{\alpha}+o(1)}).
\end{align}

\end{proof}

Since $L \in \N$ in Proposition \ref{prop: lower tail upper bound} is arbitrary and $K_d^{(L)} (\beta/2) \uparrow K_d(\beta/2)$ as $L\rightarrow \infty$, when $\alpha>1$, for any small constant $\delta>0,$
\begin{align*}
\P( \lambda _1(X) \le (1-\delta)   2^{1/\alpha} K_d(\beta/2) (\log n)^{1/\alpha } ) \le  {\exp(-n^{ 1 - (1-\delta/2)^{\alpha}+o(1)})}.
\end{align*}
By the relation \eqref{relation}, we deduce that for $\alpha>2$,
\begin{align} \label{640}
\P( \lambda _1(X) \le (1-\delta)  h_d(\alpha) (\log n)^{1/\alpha } ) \le  {\exp(-n^{ 1 - (1-\delta/2)^{\alpha}+o(1)})}.
\end{align} 
In addition, when $0<\alpha\le 2,$  as a consequence of \eqref{large} (and by the argument in \eqref{eq: lower tail finish 2}), 
\begin{equation}   \label{641}
    \P\big(\lambda_{1}(X) \leq  (1-\delta) (\log n)^{1/\alpha }\big)  \le {\exp(-n^{ 1 - (1-\delta)^{\alpha}+o(1)})}.
\end{equation}
Indeed, as explained in the following remark, one can lower bound $\lambda_1(X)$ in a more refined scale. This plays a crucial role when improving the localization result of the top eigenvector in Section \ref{sec4}.

\begin{remark}[Lower tail fluctuation scale] \label{finer lower}
When $0<\alpha\le 2$, for any constant $\tau \in ({\max} \{0,1-\alpha \} ,1)$,
\begin{equation} \label{620}
    \P\big(\lambda_{1}(X) \leq  (\log n)^{1/\alpha } - (\log n)^{\tau/\alpha}  \big)  \le {\exp(-n^{ {\min(\alpha,1)\cdot} (\log n)^{(\tau-1)/\alpha} + O(1/\log n) })} = o(1).
\end{equation}
 To see this, using \eqref{large}, setting $ \delta_{n} \coloneqq (\log n)^{(\tau-1)/\alpha}$, the above probability is bounded by
\begin{align*}
   \P\paren{ \max_{(i,j)\in E(A)}|W_{ij}| \leq (\log n)^{1/\alpha } - (\log n)^{\tau/\alpha} } 
    &\le \exp(-n^{ 1 - (1-\delta_{n})^{\alpha} + O(1/\log n) }) \\
    &\le \exp(-n^{ {\min(\alpha,1)\cdot} \delta_{n} + O(1/\log n) }),
\end{align*}
where in the last inequality we used the fact $(1-x)^\alpha \le 1-{\min(\alpha,1)\cdot}x$ for $0\le x\le 1$.
Since  $\delta_{n}\gg (\log n)^{-1}$ (recall $\tau>{\max}\{0,1-\alpha \}$), the above bound is $o(1).$

In addition, when $\alpha>2,$ we similarly deduce from \eqref{eq: lower tail finish 1} that  (note that the estimate  \eqref{eq: lower tail finish 1} holds for varying $\delta$ as well)  for any constant  $\tau>0,$
\begin{align*}
     \P\big( \lambda_{1}(X)  \leq 2^{1/\alpha} K_d^{(L)}(\beta/2)
 (\log n)^{1/\alpha} - (\log n)^{\tau/\alpha} \big)  = o(1).
\end{align*}
However, unless $\tau \ge 1,$ this is \emph{not} enough to obtain the  following fluctuation result
 \begin{equation*}
    \P\big( \lambda_1(X) \le h_d(\alpha)
 (\log n)^{1/\alpha} - (\log n)^{\tau/\alpha} \big)  =o(1),
 \end{equation*}
 since we are not aware of the quantitative convergence rate $K^{(L)}_d \uparrow K_d$ as $L\to \infty$. The best (lower) fluctuation upper bound of $\lambda_1(X)$, which can be obtained by our approach, is of order $(\log n)^{1/\alpha}$, i.e.~the estimate \eqref{640}.
 
\end{remark}

\section{Upper bound on the largest eigenvalue} \label{sec3}

In this section,  we establish an upper bound for the upper tail of the largest eigenvalue.

\begin{proposition}[Upper tail upper bound]\label{prop: upper tail upper bound}
The following upper tail estimates hold.

    \noindent
	1. In the case $\alpha > 1$, for any  $\tau> \frac{\alpha+1}{2\alpha+1} $, denoting by $\beta$ the conjugate of $\alpha,$
	\begin{equation} \label{301}
	\lim_{n\to \infty} \P\paren{ \lambda_{1}(X) \ge 2^{1/\alpha} K_d (\beta/2) (\log n)^{1/\alpha } + (\log n)^{\tau/\alpha} } = 0.
	\end{equation}
 	2. In the case $0<\alpha\le 1$, for any  $\tau> \frac{2}{\alpha+2}  $, 
	\begin{equation} \label{302}
	\lim_{n\to \infty} 
 \P\paren{ \lambda_{1}(X) \ge  (\log n)^{1/\alpha } + (\log n)^{\tau/\alpha} }=0.
	\end{equation}
\end{proposition}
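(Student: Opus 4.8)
The plan is to implement the sparsification strategy sketched in Section~\ref{idea}, combining the truncation decomposition $X = X^{(1)} + X^{(2)}$ with the tree-excess splitting $X^{(1)} = X^{(1,1)} + X^{(1,2)}$, and then controlling each piece at the appropriate fluctuation scale. I would begin by fixing a slowly-growing truncation parameter $b_n$; a natural choice is $b_n \asymp (\log n)^{\theta}$ for a suitable $\theta \in (0,1)$ calibrated to the target exponent $\tau$, so that on one hand the percolation connectivity probability $p = \P(|W_{ij}| > b_n^{1/\alpha}) \le C_2 e^{-b_n}$ is small enough to force the shattering effect on $A^{(1)}$, and on the other hand the discarded mass is below the fluctuation scale $(\log n)^{\tau/\alpha}$.

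\textbf{Step 1: negligibility of $X^{(2)}$.} Since $\|A^{(2)}\| \le \|A\| = d$ and the surviving weights satisfy $|W_{ij}^{(2)}| \le b_n^{1/\alpha}$, a crude operator-norm bound gives $\|X^{(2)}\| \le d\, b_n^{1/\alpha} \ll (\log n)^{\tau/\alpha}$ provided $b_n = o((\log n)^{\tau})$. Actually one should be slightly more careful: $X^{(2)}$ is a weighted $d$-regular graph with bounded weights, so a trace/moment estimate or a simple Schur-test argument on the bounded-degree structure gives $\|X^{(2)}\| \lesssim b_n^{1/\alpha}$ with overwhelming probability, which is below scale. \textbf{Step 2: shattering of $A^{(1)}$.} Invoke Lemma~\ref{lem: size of connected compo} (the general percolation-on-bounded-degree-graph shattering statement) to conclude that with probability $1-o(1)$, every connected component of $A^{(1)}$ has at most $K_n := (\log n)^{O(1)}$ vertices, say polylogarithmic in $n$. \textbf{Step 3: removing tree-excess edges.} On the event $\mathcal{E}_1$ from Proposition~\ref{prop: almost tree nbhd} (with $R_n$ chosen $\gg$ the component size bound $K_n$, still $\ll \log n$), each component of $A^{(1)}$ has excess at most $\omega$, except on a negligible set, and those exceptional components are confined to $\le (d-1)^{4R_n} = n^{o(1)}$ vertices. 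Peeling off one tree-excess edge per unit of excess, $X^{(1,2)}$ consists of $O(1)$-per-component edges; bounding its norm by the max weight times the number of components intersecting any fixed vertex, and using that truncated weights on $A^{(1)}$ are conditionally i.i.d.~Weibull-above-$b_n^{1/\alpha}$, one gets $\|X^{(1,2)}\| \le (\log n)^{1/\alpha + o(1)}$ — but here I need the finer statement that $\|X^{(1,2)}\|$ is actually $\le (\log n)^{\tau/\alpha}$ with high probability, which should follow because each component is small and carries only $O(\omega)$ excess edges, so its contribution is that of $O(1)$ conditioned Weibulls on a bounded-size graph, whose norm concentrates below scale after a union bound over the $O(n)$ components.

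\textbf{Step 4: the tree network $X^{(1,1)}$.} This is the heart of the argument and I expect it to be the main obstacle. Here one must bound $\max_k \lambda_1(T_k)$ over all vertex-disjoint tree-components $T_k$. For each $T_k$, use the variational formula and H\"older as in \eqref{144}--\eqref{145}: with $\mathbf{v}$ the top eigenvector of $T_k$, $\lambda_1(T_k) \le (\sum_{(i,j)\in \overrightarrow{E}(T_k)} |Y_{ij}|^\alpha)^{1/\alpha} \cdot (\sum_{(i,j)} |v_i v_j|^\beta)^{1/\beta}$ when $\alpha>1$, and the second factor is bounded uniformly by $C(\beta,d) = 2^{1/\alpha} K_d(\beta/2)$ via embedding into $\mathbb{T}_d$ (using Lemma~\ref{formula k} / \eqref{112}). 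When $0<\alpha\le 1$ the H\"older step degenerates and instead one bounds $\lambda_1(T_k) \le \sum_{(i,j)} |Y_{ij}|\,|v_iv_j| \le (\max_{(i,j)}|Y_{ij}|)\cdot \sum |v_iv_j| \le \max_{(i,j)}|Y_{ij}| \cdot \|\mathbf{v}\|_2^2 \cdot (\text{deg factor})$, effectively reducing to the single-edge maximum — this is why the threshold is $\tau > \frac{2}{\alpha+2}$ rather than $\frac{\alpha+1}{2\alpha+1}$. \textbf{Step 5: the $\ell^\alpha$-norm tail and the $v_i$-splitting.} A naive tail bound on $\|Y\|_\alpha^\alpha = \sum_{(i,j)\in E(T_k)} |Y_{ij}|^\alpha$ fails because the $Y_{ij}$ are conditioned to exceed $b_n^{1/\alpha}$. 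Following \cite[Prop.~5.7]{GN22}, split each $T_k$ according to whether $v_i$ is large or small: on the low-$v_i$ part, the H\"older second factor is small (using bounded degree $\le d$ and the tree embedding, the sum $\sum |v_iv_j|^\beta$ over edges touching only small-$v_i$ vertices is $o(1)$), so that part contributes below scale; on the high-$v_i$ part, the number of vertices is bounded by a constant depending only on the $v_i$-truncation level (crucially \emph{not} on $b_n$), so $\|Y\|_\alpha^\alpha$ restricted there is a sum of $O(1)$ conditioned Weibulls, and Lemma~\ref{app: weibull sum} in the appendix gives the right tail: $\P(\|Y\|_\alpha^\alpha \ge t \log n) \le n^{-t+o(1)}$. \textbf{Step 6: union bound.} There are at most $n$ components; requiring $\lambda_1(T_k) \ge 2^{1/\alpha}K_d(\beta/2)(\log n)^{1/\alpha} + (\log n)^{\tau/\alpha}$ forces $\|Y\|_\alpha^\alpha \gtrsim \log n + c(\log n)^{\tau}$ on some component, which by the tail bound has probability $\le n^{-1-c(\log n)^{\tau-1}+o(1)}$ per component, and $n$ times this is $o(1)$ precisely when $\tau$ exceeds the stated threshold — the constraint $\tau > \frac{\alpha+1}{2\alpha+1}$ arising from balancing the error in the H\"older/low-$v_i$ truncation against the gain in the tail exponent. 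Finally, assembling Steps~1--3 (negligible parts contribute $\le (\log n)^{\tau/\alpha}$, possibly after adjusting constants in $\tau$) with Steps~4--6 (the tree part is $\le 2^{1/\alpha}K_d(\beta/2)(\log n)^{1/\alpha} + (\log n)^{\tau/\alpha}$ whp) and using $\lambda_1(X) \le \|X^{(2)}\| + \|X^{(1,2)}\| + \max_k \lambda_1(T_k)$ yields \eqref{301}; the case $\alpha \le 1$ is \eqref{302} with the simpler Step~4 bound. The main difficulty, as flagged, is Step~5: making the $v_i$-splitting quantitative enough that the high-$v_i$ part has $b_n$-independent size while the low-$v_i$ error stays below the desired fluctuation scale, which is exactly where the exponent $\tau = \frac{\alpha+1}{2\alpha+1}$ (resp.~$\frac{2}{\alpha+2}$) is pinned down.
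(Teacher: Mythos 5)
Your proposal follows essentially the same route as the paper: truncate at $b_n^{1/\alpha}$, discard the bounded part $X^{(2)}$ by the operator-norm bound $\|X^{(2)}\|\le d\,b_n^{1/\alpha}$, shatter $A^{(1)}$ via the bounded-degree percolation Lemma~\ref{lem: size of connected compo}, strip tree-excess edges using Proposition~\ref{prop: almost tree nbhd}, then bound each tree component by H\"older against $K_d(\beta/2)$ together with the $v_i$-splitting of \cite[Proposition~5.7]{GN22}, and conclude with a union bound (this is exactly Proposition~\ref{keyprop}).

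Two quantitative points that you have flagged as sketchy are in fact load-bearing and need to be tightened. First, in Step~5 the high-$v_i$ set $I^{c}$ is not of $O(1)$ size: one must take the truncation level $\eta_n\to 0$ (so that the low-$v_i$ H\"older factor $d\,\eta_n^{2(\beta-1)}$ vanishes), and then $|I^c|\le\eta_n^{-2}\to\infty$. Because $I^c$ is a random subset of a component of size $\asymp\log n/b_n$, the union bound over all admissible $I^c$ contributes an entropy factor $(\log n/b_n)^{\eta_n^{-2}}$, and the constraints $a_n\eta_n^{-2}\gg\log n$, $\eta_n^{-2}b_n\ll(\log n)^{1-1/\alpha}a_n^{1/\alpha}$, and $a_n\gg\max\{b_n,\log n/b_n\}$ are precisely what pin down $b_n\asymp(\log n)^{\alpha/(2\alpha+1)}$ and the threshold $\tau>\frac{\alpha+1}{2\alpha+1}$ (resp.~$\frac{2}{\alpha+2}$ when $\alpha\le 1$); calling the high-$v_i$ part ``$O(1)$ conditioned Weibulls'' would give a cleaner but incorrect exponent. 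Second, for $X^{(1,2)}$ a union bound over $O(n)$ components only gives $\|X^{(1,2)}\|\le(\log n)^{1/\alpha+o(1)}$, which is at the leading-order scale and hence not negligible. The paper's argument instead uses the events $\mathcal{E}_{1,1}\cap\mathcal{E}_{1,2}$ to show that $X^{(1,2)}$ consists of at most $(d-1)^{4R_n}=e^{O(\log n/b_n)}$ vertex-disjoint edges, so the union bound is only over $n^{o(1)}$ terms; combined with $a_n\gg\max\{b_n,\log n/b_n\}$, this gives $\|X^{(1,2)}\|\le a_n^{1/\alpha}$ with probability $1-o(1)$, which is below the fluctuation scale. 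With those two points made precise, your proposal matches the paper's proof.
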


\begin{remark}[Upper tail fluctuation scale]
Although it suffices to verify
\begin{align*}
\begin{cases}
    \alpha >2:\lim_{n\to \infty} \P\paren{ \lambda_{1}(X) \ge h_{d}(\alpha) (\log n)^{1/\alpha } + \e (\log n)^{1/\alpha} } = 0, \\
 \alpha<2  : \lim_{n\to \infty} \P\paren{ \lambda_{1}(X) \ge   (\log n)^{1/\alpha } + \e (\log n)^{1/\alpha} } = 0
\end{cases}
\end{align*}
($\e>0$ is constant)  to prove the upper bound in Theorem \ref{thm: LLN},
we establish an upper bound in a finer fluctuation scale, since this will play a crucial role in establishing a sharp localization result for the top eigenvector.

In the case of unweighted random $d$-regular graphs (i.e.~``$\alpha=\infty$''), it is a long-standing conjecture that the (non-trivial) extreme eigenvalue $\lambda_2$ satisfies $n^{2/3} [{(d-1)^{-1/2}}\lambda_2 -2 ] - c_{n,d} \overset{\text{d}}{\to} \text{Tracy-Widom}$ ($ c_{n,d}>0$ is some constant). This in particular would imply that the typical value and fluctuations 
of the extreme eigenvalue are of order 1 and $n^{-2/3}$ respectively, provided that $c_{n,d} = O(1)$. Recently, as a crucial step towards this conjecture, Huang-Yau \cite{HY21+} proved that the fluctuation is at most 
polynomial order, i.e.~there exists a small enough constant $\tau>0$ such that $\lambda_2 \le 2{\sqrt{d-1}}+n^{-\tau}$ with high probability. Very recently, the conjectured Tracy-Widom fluctuation behavior was rigorously verified \cite{he2022spectral, HY23+} in certain ``dense'' regimes, i.e.~$d\to \infty $ as $n\to \infty$  (precisely, $n^{\ep}\le d\le n^{\frac{1}{3}-\ep}$ or $n^{\frac{2}{3}+\ep}\le d\le n$ for some $\e>0$).

Once  the random $d$-regular graph  is induced with Weibull weights with  a shape parameter $\alpha>1$, our results (Theorem  \ref{thm: LLN} and Proposition \ref{prop: upper tail upper bound}) rigorously justify that  the typical value and 
 (upper) fluctuations 
of the extreme eigenvalue are of order  $(\log n)^{  \frac{1}{\alpha} }$   and at most $(\log n)^{   \frac{1}{\alpha}\cdot \frac{\alpha+1}{2\alpha+1} }$  respectively. 
 It is an important and interesting problem to find  the correct order of  fluctuations and characterize the limiting distribution after normalization.
 
\end{remark}

In order to prove Proposition \ref{prop: upper tail upper bound}, as mentioned in Section \ref{idea}, we proceed in the following  steps:
\begin{enumerate}[(1)]
\item \label{outline_light_step_decompose}
For a suitable sequence $\{b_{n}\}_{n \ge 1},$ we truncate edge-weights $W_{ij} $ as follows:
\begin{equation*}
W_{ij}^{(1)} \coloneqq W_{ij}\1_{|W_{ij}|>b_{n}^{1/\alpha}}, \quad
W_{ij}^{(2)} \coloneqq W_{ij}\1_{|W_{ij}|\le b_{n}^{1/\alpha}}.
\end{equation*}
This yields a decomposition of the underlying  random $d$-regular graph $A = A^{(1)} + A^{(2)}$ as
\begin{equation*}
A_{ij}^{(1)} = A_{ij}\1_{|W_{ij}|>b_n^{1/\alpha}} , \quad
A_{ij}^{(2)} = A_{ij}\1_{|W_{ij}|\le b_n^{1/\alpha}},
\end{equation*}
and a decomposition of the corresponding network $X=X^{(1)}+X^{(2)}$ as
\begin{equation*}
X^{(1)}_{ij} = A_{ij}^{(1)}W_{ij}, \quad X^{(2)}_{ij} = A_{ij}^{(2)}W_{ij}.
\end{equation*}
Note that given  $A^{(1)}$, the edge-weights on the network $X^{(1)}$ are i.i.d.~Weibull distributions conditioned to be greater than $b_{n}^{1/\alpha}$ in absolute value.

\item \label{outline_light_step_component_analysis} 
 We analyze the component structure of $A^{(1)}$. By a tail decay of Weibull distributions, $A^{(1)}$  can be regarded as a (bond) percolation with a connectivity probability
\begin{align} \label{qq}
p=\P(|W_{ij}|>b_{n}^{1/\alpha})\le C_2e^{-b_n}
\end{align} 
 on the random $d$-regular graph $A$. We show that this percolation has a sparsification effect, which makes the size of every connected component in $A^{(1)}$  relatively small with high probability (see Lemma \ref{lem: size of connected compo} for details).
 
\item We further decompose  the random graph $A^{(1)}=A^{(1,1)}+A^{(1,2)}$, where $A^{(1,1)}$ consists of  several vertex-disjoint trees and $A^{(1,2)}$ consists of the tree-excess edges ($X^{(1,1)}$ and $X^{(1,2)}$ denote the corresponding networks). As mentioned in Section \ref{idea}, this decomposition allows one to effectively control the contribution of $X^{(1,1)}$ in terms of a universal object, the infinite $d$-regular tree. The network 
 $X^{(1,2)}$ is shown to be negligible, since the number of tree-excess edges is relatively small (see Proposition \ref{prop: almost tree nbhd}).

\item In order to control the main part $X^{(1,1)}$, consisting of vertex-disjoint trees,  we devise a general strategy to bound the largest eigenvalue of such tree-networks (see Proposition \ref{keyprop}). A crucial feature of the network of consideration is that the maximum degree is uniformly bounded, since $A^{(1,1)}$ is a subgraph of the random $d$-regular graph.  This plays a crucial role in bounding the largest eigenvalue in terms of the $\ell^\alpha$-norm of the conductance matrix in a sharp way.

\end{enumerate}

\subsection{Percolation on graphs with uniformly bounded degrees}
Recall that $A^{(1)}$  can be regarded as a (bond) percolation with a connectivity probability
$p\le C_2e^{-b_n}$
 on the random $d$-regular graph $A$ (see \eqref{qq}).  
 In the following lemma, we establish a connectivity property of  the percolation on the general graph having uniformly bounded degrees.

\begin{lemma}\label{lem: size of connected compo}
Let $C>0$, $d\in \N$ be constants and $\{b_n\}_{n\geq 1}$ be a sequence such that $b_n \gg \log \log n$.
Let $G$ be any graph with $|V(G)|=n$ whose maximum degree is at most $d$.
Consider the (bond) percolation on $G$ with connectivity probability $p \leq C e^{-b_n},$ which yields a (random) subgraph  $G'$ of $G$ consisting of open edges. Then, for sufficiently large $n,$  with probability at least $1-n^{-1}$, the  number of edges in every connected component in $G'$ is at most $ 
 \lfloor  3\log n/b_{n} \rfloor $.

	\begin{proof}
		For any $v\in V(G)$, the event that the connected component in $G'$, including $v$, has {at least $\ell$ edges} implies the following: There exist open edges $e_1,\cdots,e_\ell$ such that $v$ is one of  the endpoints of $e_1$ and, for $2\le i\le \ell$, $e_i$ is connected to  the subgraph induced by  edges $e_1,\cdots, e_{i-1}$. Since $|V(e_1\cup \cdots \cup e_{i-1})| \le i$ and degrees are bounded by $d$, by a union bound over all such possible collection of edges,
  \begin{align*}
      \P&(\exists 
 \ \text{connected component including $v$ has at least $\ell$ edges}) \\
  &\le ( d \cdot 2d \cdot 3d \cdot \cdots \cdot \ell d ) \cdot p^{\ell} \le d^{\ell}\ell^{\ell}C^{\ell}e^{-b_n \ell}.
  \end{align*}
   By a union bound over all vertices $v$ in $V(G)$,
   \begin{equation*}
       \P(\exists 
  \ \text{connected component having at least $\ell$ edges}) \le n \cdot d^{\ell}\ell^{\ell}C^{\ell}e^{-b_n \ell}.
   \end{equation*}
Taking  $\ell\coloneqq\lfloor 3\log n/b_{n} \rfloor +1$, using the condition $b_n \gg \log \log n$, we deduce that the above bound is at most $n^{-1}.$
   
	\end{proof}
\end{lemma}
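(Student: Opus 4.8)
The plan is to bound, for a fixed vertex $v\in V(G)$, the probability that the open component containing $v$ has at least $\ell$ edges, and then union bound over $v$. The key observation is that if the component of $v$ in $G'$ contains at least $\ell$ edges, then one can expose an ordered list $e_1,\dots,e_\ell$ of open edges where $e_1$ is incident to $v$ and each subsequent $e_i$ ($i\ge 2$) is incident to the vertex set of $e_1\cup\cdots\cup e_{i-1}$; this is just a greedy connectivity exposure within the component. First I would count the number of such ordered lists: having fixed $e_1,\dots,e_{i-1}$, the set $V(e_1\cup\cdots\cup e_{i-1})$ has at most $i$ vertices, and since every vertex of $G$ has degree at most $d$, there are at most $id$ choices for $e_i$. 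Hence the number of candidate lists is at most $\prod_{i=1}^{\ell} id = d^\ell \ell!$, which I will crudely bound by $d^\ell\ell^\ell$.

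The second step is to note that for any fixed such list $e_1,\dots,e_\ell$ of \emph{distinct} edges, the events $\{e_i\text{ is open}\}$ are independent (this is bond percolation, edges are included independently), so the probability that all $\ell$ are open is exactly $p^\ell\le C^\ell e^{-b_n\ell}$. Combining with the counting bound and a union over the (at most) $d^\ell\ell^\ell$ lists,
\begin{equation*}
\P(\text{component of }v\text{ has }\ge\ell\text{ edges})\le d^\ell \ell^\ell C^\ell e^{-b_n\ell}.
\end{equation*}
A union bound over the $n$ vertices gives $n\, d^\ell\ell^\ell C^\ell e^{-b_n\ell}$ for the probability that some component has at least $\ell$ edges.

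Finally I would choose $\ell=\lfloor 3\log n/b_n\rfloor+1$ and check that this quantity is at most $n^{-1}$ for large $n$. Taking logarithms, the exponent is $\log n+\ell(\log d+\log\ell+\log C-b_n)$; since $b_n\gg\log\log n$ and $\ell\le 3\log n/b_n+1$, we have $\log\ell = O(\log\log n)=o(b_n)$ and $\log d+\log C=O(1)=o(b_n)$, so for large $n$ the bracket is at most $-b_n/2$, say, whence $\ell(\cdots)\le -(b_n/2)\ell\le -(b_n/2)(3\log n/b_n)=-\tfrac32\log n$, and the total exponent is at most $\log n-\tfrac32\log n=-\tfrac12\log n<-\log n+\log n$; more carefully one gets $\le -2\log n$ for large $n$, which is far below $-\log n$. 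The only mild subtlety — and the one place to be careful — is making sure the slowly-varying factors $d^\ell,\ell^\ell,C^\ell$ are genuinely dominated by $e^{-b_n\ell}$, which is exactly where the hypothesis $b_n\gg\log\log n$ is used; everything else is a routine union-bound computation.
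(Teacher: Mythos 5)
Your proposal is correct and is essentially identical to the paper's proof: the same greedy edge-exposure giving at most $d^\ell\ell!\le d^\ell\ell^\ell$ candidate lists, the same $p^\ell$ factor from independence, the same union bounds, and the same choice $\ell=\lfloor 3\log n/b_n\rfloor+1$ with $b_n\gg\log\log n$ absorbing the subexponential factors. The final numerical check is also sound (the refined estimate giving exponent $-2\log n+o(\log n)$ is the right way to see the bound $n^{-1}$).
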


\subsection{Largest eigenvalue of tree-networks}
 Recall that from the aforementioned idea of proofs, it is crucial to analyze the spectral behavior of  $X^{(1,1)}$, a collection of 
 vertex-disjoint trees. By Lemma \ref{lem: size of connected compo}, the size of every connected component in  $X^{(1)}$  (and thus in $X^{(1,1)}$) is relatively small with high probability.
In the following key proposition, we provide a general tail bound on the largest eigenvalue of such tree-networks whose degrees are uniformly bounded. 
\begin{proposition} \label{keyprop}
Let $c>0$, $d \in \mathbb{N}$ be constants and  $\{b_n\}_{n\geq 1}$ be  a sequence such that $\log \log n \ll b_n = O(\log n) $.
Let  $G=(V,E,Y)$ be a  tree-network $(Y = (Y_{ij})_{i,j\in V}$ denotes the conductance matrix$)$ satisfying {the following properties:}
\begin{enumerate}
\item[(i)] Maximum degree is at most $d$.
\item[(ii)]  $|V| \leq c\frac{\log n }{b_n}$.  
\end{enumerate}
Suppose that edge-weights $Y_{ij}$ are i.i.d.~Weibull distributions  with a shape parameter $\alpha>0$ conditioned to be  greater than  $b_{n}^{1/\alpha}$
in absolute value. Then, the following statements hold: 
\begin{enumerate}[1.]
\item In the case $\alpha>1$, for any sequence $\{a_n\}_{n\geq 1}$  satisfying $a_n  \gg  b_n^{\alpha / (\alpha+1)} (\log n)^{1/(\alpha+1)}$,  denoting by $\beta>1$ the H\"{o}lder conjugate of $\alpha,$   
\begin{align}\label{keydisp}
\mathbb{P} \left ( \lambda_1 \geq 2^{1/\alpha} K_d (\beta/2) (\log n)^{1/\alpha } +a_n^{1/\alpha} \right ) = o(n^{-1}),
\end{align}
where $K_d$ is the function defined in  \eqref{k}.

\item  In the case $0<\alpha\leq 1$, for any sequence $\{a_n\}_{n\geq 1}$  satisfying   $a_n \gg  b_n^{\alpha/2} (\log n)^{1-(\alpha/2)}$,  
\begin{align} \label{keydisp2}
\mathbb{P} \left ( \lambda_1 \geq (\log n)^{1/\alpha} + a_n^{1/\alpha} \right ) = o(n^{-1}).
\end{align}
\end{enumerate}

\end{proposition}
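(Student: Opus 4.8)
\textbf{Proof plan for Proposition \ref{keyprop}.}
The plan is to bound $\lambda_1 = \lambda_1(G)$ by splitting the top eigenvector $\mathbf{v}=(v_i)_{i\in V}$, normalized by $\norm{\mathbf{v}}_2=1$, according to the sizes of the coordinates $v_i$, exactly in the spirit of the inequalities \eqref{144}--\eqref{145} sketched in Section \ref{idea}. Fix a small threshold $\theta_n$ (a negative power of $\log n$, to be tuned against $a_n$) and set $S\coloneqq\{i\in V: |v_i|\ge\theta_n\}$, $S^c\coloneqq V\setminus S$. Since $\norm{\mathbf v}_2=1$, we have $|S|\le\theta_n^{-2}$, and this bound is \emph{independent of $b_n$}, which is the crucial point. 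Write $\lambda_1 = \sum_{(i,j)\in\overrightarrow E(G)} Y_{ij}v_iv_j$ and decompose the sum into three parts: both endpoints in $S$; at least one endpoint in $S^c$ with the other in $S$; and both endpoints in $S^c$. The last two ``off-$S$'' parts will be shown to be negligible (of order $o(a_n^{1/\alpha})$, in fact much smaller), and the main contribution comes from edges inside $S$.

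First I would dispose of the off-$S$ contributions. For any edge $(i,j)$ with, say, $j\in S^c$, we have $|v_j|\le\theta_n$, and by H\"older's inequality as in \eqref{144}, restricting to these edges,
\[
\Big|\sum_{(i,j):\,j\in S^c} Y_{ij}v_iv_j\Big|
\le \Big(\sum_{(i,j)\in\overrightarrow E(G)}|Y_{ij}|^\alpha\Big)^{1/\alpha}
\Big(\sum_{(i,j):\,j\in S^c}|v_iv_j|^\beta\Big)^{1/\beta},
\]
and since $G$ embeds into $\mathbb T_d$ and the maximum degree is at most $d$, the second factor is bounded (uniformly) by $\theta_n^{\,\beta-1}$ times a constant $C(\beta,d)$ coming from the variational bound \eqref{145} — here I use that pulling out one factor $|v_j|\le\theta_n$ from an exponent-$\beta$ term leaves $|v_j|^{\beta-1}$, and $\beta>1$, so this is a genuine gain of a power of $\theta_n$ when $\alpha>1$; when $\alpha\le1$ one instead uses $\sum|Y_{ij}||v_iv_j|\le\max|Y_{ij}|\sum|v_iv_j|$ and again gains from $\theta_n$. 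The first factor $\big(\sum|Y_{ij}|^\alpha\big)^{1/\alpha}$ is the $\ell^\alpha$-norm of at most $d|V|=O(\log n/b_n)$ i.i.d.\ $\alpha$-Weibulls conditioned to exceed $b_n^{1/\alpha}$ in absolute value; by the Appendix tail estimate on sums of i.i.d.\ Weibulls, this is $O(\log n)^{1/\alpha}$ except with probability $o(n^{-1})$ (the conditioning shifts the mean by $O(\log n/b_n)\cdot b_n=O(\log n)$ in the $\alpha$-th power, which is the same order). So the off-$S$ parts are $O(\theta_n^{\,c}(\log n)^{1/\alpha})$ for some $c>0$, negligible provided $\theta_n$ is a small enough power of $\log n$.

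Next, the main part: edges inside $S$. Apply H\"older again to this restricted sum,
\[
\Big|\sum_{(i,j)\in\overrightarrow E(G),\,i,j\in S} Y_{ij}v_iv_j\Big|
\le \Big(\sum_{(i,j)\in\overrightarrow E(G)}|Y_{ij}|^\alpha\Big)^{1/\alpha}
\Big(\sup_{\norm{\mathbf u}_2=1}\sum_{(i,j)\in\overrightarrow E(\mathbb T_d)}|u_iu_j|^\beta\Big)^{1/\beta}
= \Big(\sum|Y_{ij}|^\alpha\Big)^{1/\alpha}\cdot K_d(\beta/2)
\]
when $\alpha>1$ (using $\norm{\mathbf u}_2=1 \Leftrightarrow \norm{\mathbf u^{\odot2}}_1=1$ and the definition \eqref{k} of $K_d$ with $\gamma=\beta/2$); when $\alpha\le1$ the analogous bound is $\max_{(i,j)}|Y_{ij}|\cdot\sum_{i,j\in S}|v_iv_j|\le\max|Y_{ij}|$ since $\sum|v_iv_j|\le\norm{\mathbf v}_2^2\cdot(\text{something}\le1)$ — more precisely $\sum_{(i,j)\in\overrightarrow E(T)}|v_iv_j|\le 1$ via $2|v_iv_j|\le v_i^2+v_j^2$ and bounded degree, which is the $\gamma=1$ case of \eqref{general}. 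Now the only remaining randomness is the scalar $\sum_{(i,j)\in E(G)}|Y_{ij}|^\alpha$ (undirected sum), a sum of at most $|E(G)|\le|V|-1\le c\log n/b_n$ i.i.d.\ copies of $|Y|^\alpha$ where $|Y|$ is $\alpha$-Weibull conditioned on $|Y|\ge b_n^{1/\alpha}$, i.e.\ $|Y|^\alpha$ is (stochastically) $b_n+\text{Exp}(1)$-ish with the correct tail. The required estimate is: with the number of summands $N=O(\log n/b_n)$,
\[
\P\Big(\sum_{k=1}^N |Y_k|^\alpha \ge \tfrac12\log n + a_n\Big) = o(n^{-1}),
\]
and then multiplying by the deterministic constant $(2\cdot\tfrac12\log n)^{1/\alpha}K_d(\beta/2)=2^{1/\alpha}K_d(\beta/2)(\log n)^{1/\alpha}$ (the factor $2$ is the directed-vs-undirected discrepancy) yields \eqref{keydisp}; the $\alpha\le1$ case \eqref{keydisp2} is the $N$-free statement $\P(\max_k|Y_k|^\alpha\ge\log n+a_n)=o(n^{-1})$ combined with a union bound over the $O(\log n/b_n)$ edges, which is immediate from the Weibull tail. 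This sum-of-Weibulls tail bound is precisely what the Appendix technical lemma supplies: for $\alpha>1$ the tail of $\sum_{k=1}^N|Y_k|^\alpha$ with $N\ll\log n/b_n$ summands (each conditioned above $b_n$) at level $\tfrac12\log n+a_n$ is governed by a single large summand, costing $\exp(-(\tfrac12\log n+a_n)+O(N b_n))=\exp(-\tfrac12\log n - a_n + O(\log n))$ — wait, one must be careful that $Nb_n = O(\log n)$ does not swamp the $\tfrac12\log n$; this is handled by choosing the constant $c$ in hypothesis (ii) appropriately and, more importantly, by the fact that the \emph{typical} value of $\sum|Y_k|^\alpha$ is $Nb_n + O(N) = O(\log n/b_n)\cdot b_n$, which is $\le\epsilon\log n$ if the relevant constant is small, but since $b_n = O(\log n)$ only, one actually needs $N b_n \le c\log n$ with $c$ small, which is exactly hypothesis (ii) — so the genuine content is that the overshoot above the typical value by $\gtrsim\tfrac12\log n$ is dominated by one summand and costs $n^{-1/2-\text{(gain from }a_n)}$.

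\textbf{Main obstacle.} The delicate point — and the place where the precise hypotheses on $a_n$ and $b_n$ enter — is the balancing in the last step between (a) the number of summands $N\asymp\log n/b_n$, (b) the per-summand shift $b_n$ from the conditioning, and (c) the fluctuation scale $a_n$. One needs the sum-of-Weibulls tail bound to hold with the excess level $a_n$ small enough to be a genuine $o((\log n)^{1/\alpha})$ fluctuation yet large enough that the probability is $o(n^{-1})$ rather than merely $o(1)$; this is why the hypothesis is $a_n\gg b_n^{\alpha/(\alpha+1)}(\log n)^{1/(\alpha+1)}$ and not simply $a_n\to\infty$ — the exponent $\frac{\alpha}{\alpha+1}$ comes from optimizing the number of ``typical-sized'' summands one allows to be moderately large versus forcing a single giant one, exactly the trade-off analyzed in the Appendix lemma. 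Carrying that optimization through cleanly, and making sure the threshold $\theta_n$ for the eigenvector-splitting can be chosen as a power of $\log n$ that is simultaneously (i) small enough that the off-$S$ error $\theta_n^{\,c}(\log n)^{1/\alpha}$ is $o(a_n^{1/\alpha})$ and (ii) not so small that $|S|=\theta_n^{-2}$ becomes polynomial in $n$ (it must stay sub-polynomial so that the $o(n^{-1})$ probabilities survive a union bound over the structure of $S$), is the main technical work. Everything else — H\"older, the embedding into $\mathbb T_d$, the definition of $K_d$, Lemma \ref{formula k} for the $\alpha\le1$ constant — is routine given the earlier parts of the paper.
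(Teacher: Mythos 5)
Your overall architecture — split the top eigenvector by coordinate size, apply H\"older, embed the tree into $\mathbb{T}_d$ to produce $K_d(\beta/2)$, and control the remaining $\ell^\alpha$-norm of the weights via the Appendix tail lemma — is the same as the paper's. But your treatment of the main term has a genuine gap. For the edges inside $S$ you bound the first H\"older factor by $\big(\sum_{(i,j)\in E(G)}|Y_{ij}|^\alpha\big)^{1/\alpha}$, the sum over \emph{all} $N\asymp\log n/b_n$ edges, and you then need $\P\big(\sum_{k=1}^N|Y_k|^\alpha\ge \log n+\delta_n\big)=o(n^{-1})$ with $\delta_n\asymp(\log n)^{1-1/\alpha}a_n^{1/\alpha}=o(\log n)$ (your stated level $\tfrac12\log n+a_n$ is an arithmetic slip: $(2\cdot\tfrac12\log n)^{1/\alpha}=(\log n)^{1/\alpha}$, not $2^{1/\alpha}(\log n)^{1/\alpha}$, so the undirected sum must be controlled at level $\log n$). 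This estimate is false: each conditioned weight satisfies $|Y_k|^\alpha\ge b_n$, so the sum is deterministically at least $|E|\,b_n$, which is of order $c\log n$, and Lemma \ref{lem: conditioned alpha power sum asymp} at $L=\log n+\delta_n$ gives only $e^{-L+mb_n+o(\log n)}=n^{-1+c+o(1)}e^{-\delta_n}$, never $o(n^{-1})$ for a fixed $c>0$ because $\delta_n\ll\log n$. You notice this yourself, but the proposed remedy of ``choosing the constant $c$ in hypothesis (ii) appropriately'' is not available: $c$ is an arbitrary given constant, and it equals $3$ in the application to Proposition \ref{prop: upper tail upper bound}.

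The repair is exactly what the paper does: in the main term, restrict the $\ell^\alpha$-sum to the edges meeting the set of large coordinates (the paper's $E_L$, of cardinality at most $d\eta_n^{-2}$, independent of $b_n$), so that the conditioning cost is only $O(\eta_n^{-2}b_n)$; the hypothesis $a_n\gg b_n^{\alpha/(\alpha+1)}(\log n)^{1/(\alpha+1)}$ is precisely what guarantees a threshold $\eta_n$ exists making this cost $\ll(\log n)^{1-1/\alpha}a_n^{1/\alpha}$ while the small-coordinate part stays negligible — that is where the exponent $\alpha/(\alpha+1)$ comes from, not from a one-big-jump optimization over the number of large summands. Note also that the paper's negligible part is ``both coordinates small,'' which gains $\eta_n^{2(\beta-1)}$ in the $\beta$-sum, whereas your ``at least one coordinate small'' part gains only $\theta_n^{\beta-1}$; this forces a larger set $S$ and makes the entropy and conditioning terms in the main part harder to close even after the repair. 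Finally, since $S$ depends on the eigenvector, the main-term bound must be closed by a union bound over the possible positions of the $\le\theta_n^{-2}$ large coordinates, which you mention only in passing but which is where the second constraint on $\eta_n$ in \eqref{312} is actually used.
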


\begin{proof}
  Denote by $\textbf{f}=(f_{i})_{i\in V}$ a unit eigenvector corresponding to the largest eigenvalue, i.e.~$\norm{\textbf{f}}_2=1$ and
\begin{align*}
\lambda_1 =  \sum_{(i,j)\in \overrightarrow{E}} Y_{ij}f_{i}f_{j},
\end{align*}
where $\overrightarrow{E}$ denotes the set of directional version of edges in $G$ (i.e.~every edge is counted twice).

We decompose the network according to the value of $f_i$s with respect to the 
 truncation level $\eta_{n}>0$ satisfying  
\begin{equation} \label{312}
a_{n}\eta_{n}^{-\min\{2,2\alpha\}} \gg\log n,\quad  (\log n)^{1-1/\alpha}a_{n}^{1/\alpha}\gg  \eta_{n}^{-2}b_n.
\end{equation}
Such $\eta_n>0$ exists due to the  condition imposed on the sequences  $\{a_n\}_{n\geq 1}$ and  $\{b_n\}_{n\geq 1}$. 
Since $b_n = O(\log n)$ and the probabilities in \eqref{keydisp} and \eqref{keydisp2} are monotone in $a_n$, without loss of generality, we may assume that $a_n  = O(\log n)$. Since $b_n \gg \log \log n,$  by the second condition  in \eqref{312},
\begin{align} \label{313}
    \eta_n^{-2} \ll  \frac{ (\log n)^{1-1/\alpha}a_{n}^{1/\alpha}}{\log \log n}  = O\Big(\frac{\log n}{\log \log n}\Big).
\end{align}
 
Define the collection of vertices whose corresponding value of $f_i$  is less than $\eta_n$ in absolute value:
\begin{equation}\label{eq: index set for small coordinates}
I \coloneqq \{ i\in V : |f_{i}|<\eta_{n}\}.
\end{equation}
Then, since $\norm{\textbf{f}}_2=1$,
\begin{align} \label{321}
    |I^c| \le \eta_{n}^{-2}.
\end{align}
Now, we decompose the collection of directed edges $\overrightarrow{E\;}$ as
\begin{equation}\label{eq: E_S E_L}
\overrightarrow{E}_{S}\coloneqq \{ (i,j)\in\overrightarrow{E\;}  : i,j\in I \},\quad \overrightarrow{E}_{L}\coloneqq \overrightarrow{E\;}\backslash\overrightarrow{E}_{S},
\end{equation}
and also denote by $E_{S}$ and $E_{L}$ the undirected counterpart of $\overrightarrow{E}_{S}$ and $\overrightarrow{E}_{L}$ respectively. We write $\lambda_{1}= \lambda_{S} + \lambda_{L}$ as
\begin{equation}\label{eq: lambda_S lambda_L}
\lambda_{S} \coloneqq \sum_{(i,j)\in \overrightarrow{E}_{S}} Y_{ij}f_{i}f_{j} = 2\sum_{(i,j)\in E_{S}} Y_{ij}f_{i}f_{j}, \quad
\lambda_{L} \coloneqq \sum_{(i,j)\in \overrightarrow{E}_{L}} Y_{ij}f_{i}f_{j}
= 2\sum_{(i,j)\in E_{L}} Y_{ij}f_{i}f_{j}.
\end{equation}

\textbf{Case 1.~$\mathbf{\boldsymbol{\alpha}>1}$.}
Since $\lambda_{1}=\lambda_{S}+\lambda_{L}$, 
\begin{multline}\label{eq: upper tail light 3}
\P\paren{ \lambda_{1} > 2^{1/\alpha} K_d (\beta/2) (\log n)^{1/\alpha}+ a_{n}^{1/\alpha} } \\
\le \P\paren{ \lambda_{S} > 2^{-1}a_{n}^{1/\alpha} }
+ \P\paren{ \lambda_{L} > 2^{1/\alpha} K_d (\beta/2) (\log n)^{1/\alpha} + 2^{-1}a_{n}^{1/\alpha} }.
\end{multline}
We first  bound $\lambda_S$. By H\"{o}lder's inequality,
\begin{equation*}
\lambda_{S}  \le \Big( 2 \sum_{(i,j)\in E_{S}} |Y_{ij}|^{\alpha} \Big)^{\frac{1}{\alpha}} \Big(2 \sum_{(i,j)\in E_{S}} |f_{i}f_{j}|^{\beta} \Big)^{\frac{1}{\beta}}.
\end{equation*}
The second term above can be bounded via
\begin{align} \label{334}
2\sum_{(i,j)\in E_{S}} |f_{i}f_{j}|^{\beta} = 2\sum_{(i,j)\in E_{S}} |f_{i}f_{j}|^{\beta-1} |f_{i}f_{j}| 
&\le \  \sum_{(i,j)\in E_{S}} 
 |f_{i}f_{j}|^{\beta-1} (|f_{i}|^{2}+|f_{j}|^{2}) \nonumber  \\
&\le  \eta_{n}^{2(\beta-1)}  \sum_{(i,j)\in E_{S}} (|f_{i}|^{2}+|f_{j}|^{2}) \leq d\eta_{n}^{2(\beta-1)}  ,
\end{align}
where we used the condition (i) together with  $\norm{\textbf{f} }_2=1$ in the last inequality.  
Hence, by Lemma \ref{lem: conditioned alpha power sum asymp} together with the fact $|E|= |V|-1 \le c\log n/b_{n}$,
\begin{multline}\label{eq: upper tail light 4}
\P (\lambda_{S} > 2^{-1}a_{n}^{1/\alpha} ) \le \P\Big( \sum_{(i,j)\in E_S} |Y_{ij}|^{\alpha} >  \frac{2^{-\alpha-1}a_{n}}{d^{\alpha/\beta}\eta_{n}^{2}} \Big) 
\le \P\Big( \sum_{(i,j)\in E} |Y_{ij}|^{\alpha} >  \frac{2^{-\alpha-1}a_{n}}{d^{\alpha/\beta}\eta_{n}^{2}} \Big) \\
\le \exp\paren{-\frac{2^{-\alpha-1}a_{n}}{d^{\alpha/\beta}\eta_{n}^{2}} + c\log n + {o(\log n)}}=o(n^{-1})
\end{multline}
(Lemma \ref{lem: conditioned alpha power sum asymp} is applicable since $ a_{n}\eta_{n}^{-2} \gg \log n \gg  \log n / b_n$, see \eqref{312}),
where  in the last estimate we used again the first condition in  \eqref{312}.

Next, we  bound $\lambda_{L}$. By  H\"{o}lder's inequality,
\begin{align*}
\lambda_{L} \le \Big( 2\sum_{(i,j)\in E_{L}} |Y_{ij}|^{\alpha} \Big)^{\frac{1}{\alpha}} \Big(  2\sum_{(i,j)\in E_{L}} |f_{i}f_{j}|^{\beta} \Big)^{\frac{1}{\beta}}.
\end{align*}
Since $G$ is a tree with maximum degree at most $d$, it can be embedded into the infinite $d$-regular tree $\mathbb{T}_d$. Thus, by the definition of the function $K_d$  in  \eqref{k},
\begin{align}  \label{330}
   \Big( 2\sum_{(i,j)\in E_{L}} |f_{i}f_{j}|^{\beta} \Big)^{\frac{1}{\beta}} \le   \Big(2 \sum_{(i,j)\in E} |f_{i}f_{j}|^{\beta} \Big)^{\frac{1}{\beta}} \le   
 \sup_{ \norm{\textbf{v}}_2=1} \Big( 2\sum_{(i,j)\in E(\mathbb{T}_d)} |v_{i}v_{j}|^{\beta} \Big)^{\frac{1}{\beta}} = K_d(\beta/2),
\end{align} 
 where the last equality follows from the observation that the constraint above is w.r.t. the  $\ell^2$-norm $\norm{\textbf{v}}_2=1$.
Hence, by the above displays,
\begin{align} \label{355}
     \lambda_L\le 2^{1/\alpha} K_d(\beta/2)  \Big( \sum_{(i,j)\in E_{L}} |Y_{ij}|^{\alpha} \Big)^{\frac{1}{\alpha}} .
\end{align}
Note that the event $
\big\{ \sum_{(i,j)\in E_L} |Y_{ij}|^{\alpha} > t  \big\}$
implies the existence of a ({random}) subset $J\subseteq V$ with $|J|\le\eta_{n}^{-2}$ such that  $
\sum_{(i,j)\in E,  \ i  \ \text{or} \  j \in J} |Y_{ij}|^{\alpha} > t$. This yields the entropy factor bounded by $(c \log n/b_{n}) ^{\eta_{n}^{-2}}$.
Since
\begin{align} \label{323}
    |E_{L}|\le d |I^c| \overset{\eqref{321}}{\le}  d\eta_{n}^{-2}
\end{align}
(the condition (i) is used in the first inequality), by \eqref{355} and  Lemma \ref{lem: conditioned alpha power sum asymp}  
together with a union bound, for  some constant $c'>0$,
\begin{align}\label{eq: upper tail light 5}
\P&\paren{ \lambda_{L} > 2^{1/\alpha} K_d (\beta/2)(\log n)^{1/\alpha} + 2^{-1}a_{n}^{1/\alpha} } \nonumber \\
&\le ( c\log n/b_{n}) ^{\eta_{n}^{-2}}
\exp\big(- \log n - c' (\log n)^{1-1/\alpha}a_{n}^{1/\alpha} + d\eta_{n}^{-2}b_{n} + o(\eta_{n}^{-2}b_{n})\big) = o(n^{-1})
\end{align}
(Lemma \ref{lem: conditioned alpha power sum asymp} is applicable since $\log n\gg \eta_{n}^{-2}$, see \eqref{313}).
Here, we used {$(x+y)^\alpha \ge x^\alpha + \alpha x^{\alpha-1} y$ 
 ($x,y\ge 0$ and $\alpha>1$)} in the first inequality. Also, in the last estimate above, we used the second condition in \eqref{312} and the fact
\begin{align} \label{entropy}
    ( c\log n/b_{n}) ^{\eta_{n}^{-2}} = \exp ( o((\log n)^{1-1/\alpha}a_{n}^{1/\alpha}))
\end{align}  
which follows from the first inequality in \eqref{313}.

Therefore, \eqref{eq: upper tail light 4} and \eqref{eq: upper tail light 5} conclude the proof.

 ~

\textbf{Case 2.~$\boldsymbol{0 < \alpha \le 1}$.}
Similarly as above,
\begin{align}\label{eq: small + large}
\P\paren{ \lambda_{1}  >  (\log n)^{1/\alpha} + a_{n}^{1/\alpha} }
\le \P\paren{ \lambda_{S} > 2^{-1}a_{n}^{1/\alpha} }
+ \P\paren{ \lambda_{L} > (\log n)^{1/\alpha} + 2^{-1}a_{n}^{1/\alpha} }.
\end{align}
We  first bound $\lambda_{S}$.
 For $0<\alpha<1$, by the monotonicity of $\ell^p$-norm in $p$,
 \begin{equation}\label{eq: low value part}
\lambda_{S} \le  2  \eta_{n}^{2} \sum_{(i,j)\in E_{S}} |Y_{ij}|\le 2 \eta_{n}^{2}\Big( \sum_{(i,j)\in E_{S}} |Y_{ij}|^{\alpha} \Big)^{\frac{1}{\alpha}}.
\end{equation}
Using the fact $|E_S| \le |E| = |V|-1 \leq c \log n/b_{n},$ by Lemma \ref{lem: conditioned alpha power sum asymp},
\begin{align}\label{eq: small part}
\P\paren{ \lambda_{S} > 2^{-1}a_{n}^{1/\alpha} }
\le \exp\paren{-\frac{a_{n}}{2^{2\alpha}\eta_{n}^{2\alpha}} + c\log n + o(\log n)} = o(n^{-1}),
\end{align}
where the first {condition} \eqref{312} is used in the last estimate.

Next, we bound $\lambda_{L}$. Using the fact $|f_{i}f_{j}|\le 1/2$ for $i\neq j$ (recall   $\sum_{i\in V}|f_{i}|^{2}=1$), by  the monotonicity of $\ell^p$-norm again,
\begin{equation*}
\lambda_{L}  \le   \sum_{(i,j)\in E_{L}}  |Y_{ij} |\le \Big( \sum_{(i,j)\in E_{L}} |Y_{ij}|^{\alpha} \Big)^{\frac{1}{\alpha}} .
\end{equation*}
Hence, using the fact $|E_{L}|\le d\eta_{n}^{-2}$ (see \eqref{323}), by Lemma \ref{lem: conditioned alpha power sum asymp} together with the fact
\begin{equation}\label{3a77}
    (x+y)^\alpha \ge x^\alpha  + \alpha 2^{\alpha-1} x^{\alpha-1} y,  
    \quad \forall x \ge y \ge 0 \text{ and } 0<\alpha\le 1,
\end{equation}
we deduce that for some constant $c''>0,$ 
\begin{align*} 
\P\big( \lambda_{L} &> (\log n)^{1/\alpha} + 2^{-1}a_{n}^{1/\alpha} \big) \nonumber \\
&
\le ( c\log n/b_{n}) ^{\eta_{n}^{-2}} \exp\paren{- \log n - c'' (\log n)^{1-1/\alpha}a_{n}^{1/\alpha} + d\eta_{n}^{-2}b_{n} + o(\eta_{n}^{-2}b_{n})} = o(n^{-1}).
\end{align*} 
Therefore, this along with \eqref{eq: low value part}  conclude the proof. 
  
\end{proof}

\subsection{Proof of Proposition \ref{prop: upper tail upper bound}}
Given the previous preparations, we prove 
Proposition \ref{prop: upper tail upper bound}.

\begin{proof}[Proof of Proposition \ref{prop: upper tail upper bound}]
For a small constant $\kappa>0$,  setting
\begin{align}\label{eq: a_n}
     a_n \coloneqq  \begin{cases}
        (\log n)^{ \frac{\alpha+1}{2\alpha+1} + \kappa} \quad &\alpha>1, \\
        (\log n)^{ \frac{2}{\alpha+2} + \kappa} &0<\alpha\leq 1,
    \end{cases}
 \end{align}
it suffices to verify that  
\begin{align}
 \lim_{n\to \infty }    \P\paren{\lambda_{1}(X) \ge 2^{1/\alpha} K_d (\beta/2) (\log n)^{1/\alpha} + 3a_{n}^{1/\alpha} }  = 0 \qquad (\alpha>1),
\end{align}
and
\begin{align}
 \lim_{n\to \infty }    \P\paren{\lambda_{1}(X) \ge (\log n)^{1/\alpha} + 3a_{n}^{1/\alpha} }  = 0 \qquad  (0<\alpha\le 1),
\end{align}
{By monotonicity, we assume  that $\kappa>0$ is small enough so that $a_{n}\ll \log n$.}
Let us define  
 \begin{align}\label{eq: b_n}
      \quad b_n \coloneqq  \begin{cases}
         (\log n)^{\frac{\alpha}{2\alpha+1}} \quad &\alpha>1, \\
        (\log n)^{\frac{\alpha}{\alpha+2}} &0<\alpha\leq 1.
    \end{cases}
 \end{align}
Then, the sequences $\{a_n\}_{n\ge 1}$ and $\{b_n\}_{n\ge 1}$  satisfy 
\begin{align} \label{325}
    \log\log n \ll b_{n}\ll a_{n} 
    ,\quad \log\log n \ll \log n/b_{n} \ll a_{n} \ll \log n.
\end{align}
as well as 
\begin{align} \label{340}
    \begin{cases}
       a_n  \gg  b_n^{\alpha / (\alpha+1)} (\log n)^{1/(\alpha+1)} \quad &\alpha>1, \\
        a_n \gg  b_n^{\alpha/2} (\log n)^{1- (\alpha/2)} \quad &0<\alpha\leq 1.
    \end{cases}
\end{align} 
We decompose the network $X=X^{(1)}+X^{(2)}$ with
\begin{equation} \label{611}
X^{(1)}_{ij} = (A_{ij}  \1_{|W_{ij}|>b_{n}^{1/\alpha}} ) W_{ij}, \quad X^{(2)}_{ij} = (A_{ij} \1_{|W_{ij}|\le b_{n}^{1/\alpha}}  ) W_{ij}.
\end{equation}
We separately consider the cases $\alpha>1$ and $0<\alpha\leq 1.$

~

\textbf{Case 1.~$\boldsymbol{\alpha > 1}$.} 
We have
\begin{multline}\label{330vv}
\P\paren{\lambda_{1}(X) \ge 2^{1/\alpha} K_d (\beta/2) (\log n)^{1/\alpha} + 3a_{n}^{1/\alpha} } \\
\le \P\paren{\lambda_{1}(X^{(1)}) \ge 2^{1/\alpha} K_d (\beta/2) (\log n)^{1/\alpha} + 2a_{n}^{1/\alpha} }
 + \P\paren{\lambda_{1}(X^{(2)}) \ge a_{n}^{1/\alpha} }.
\end{multline}
 First,
$\lambda_{1}(X^{(2)})$ can be bounded in a straightforward way. In fact, since the largest eigenvalue of the $d$-regular graph is equal to $d$,
\begin{align} \label{x2}
\lambda_{1}(X^{(2)}) \le  b_{n}^{1/\alpha} \lambda_{1}(A) = b_{n}^{1/\alpha}d    .
\end{align}
Since $a_{n}\gg b_{n}$ (see \eqref{325}), 
\begin{equation} \label{331}
\P\paren{\lambda_{1}(X^{(2)}) \ge a_{n}^{1/\alpha} }= 0.
\end{equation}

Next, we bound  $\lambda_{1}(X^{(1)}).$  Let $\cE_{1}$ be the event  defined in Proposition \ref{prop: almost tree nbhd} with $R_n\coloneqq\lfloor 3\log n/b_{n} \rfloor$ and $w\coloneqq 1$, i.e.,
\begin{align} \label{341}
    \text{$B_{R_n}(i)$ (in the  graph $A$) has at most one tree-excess edge},\quad \forall i \in [n],
\end{align}
and
\begin{align} \label{342}
    \size{\parenn{i\in [n]:\text{$B_{R_n}(i)$ (in the  graph $A$) contains a cycle}}}\le (d-1)^{4R_n}.
\end{align}
Note that these properties  hold for any subgraph of $A$ as well (w.r.t.~graph distance induced by the subgraph), in particular for  the subgraph $A^{(1)}$.
Then, by \eqref{final} in  Proposition \ref{prop: almost tree nbhd} (which is applicable since $\log\log n\ll R_n=\lfloor 3\log n/b_{n} \rfloor \ll \log n$, see \eqref{325}), for sufficiently large $n$,
\begin{align}  \label{332}
  \P(\cE_{1}^{c}) \le n^{-1/2}.
\end{align}
Next, define the event $\cE_{2}$ by
\begin{equation} \label{ce2}
\cE_{2} \coloneqq \left\{\text{Every connected component  in $A^{(1)}$ has at most $R_n (=\lfloor 3\log n/b_{n}\rfloor)$ edges}\right\}.
\end{equation}
Since $A^{(1)}$ can be regarded as  a  (bond) percolation with a connectivity probability $p\leq C_2 e^{-b_n}$ on the random $d$-regular graph $A$  with $|V(A)| = n$,
by Lemma \ref{lem: size of connected compo}, 
\begin{align}\label{eq: component size}
  \P(\cE_{2}^{c})  = \E   [\P(\cE_{2}^{c}| A )  ] \le  n^{-1}.
\end{align}
We now extract tree-excess edges in the graph $A^{(1)}$ and then analyze their contribution.
Let $C_{1},\cdots,C_{m}$ be connected components in $A^{(1)}$. For each $k\in [m]$, we denote by $E_{\mathsf{Excess}}(k)$ the collection of tree-excess edges in  the component $C_{k}$ (the spanning tree of $C_k$ is taken  arbitrarily). 
Then, define 
\begin{equation*}
E_{\mathsf{Excess}} \coloneqq \bigcup_{k\in [m]} E_{\mathsf{Excess}}(k) \subseteq E(A^{(1)} ).
\end{equation*} 
This induces a further decomposition $X^{(1)}=X^{(1,1)}+X^{(1,2)}$ as follows:
\begin{equation}\label{eq: further decomp}
X^{(1,1)}_{ij} = X^{(1)}_{ij}\1_{ (i,j)\notin E_{\mathsf{Excess}}} , \quad X^{(1,2)}_{ij} = X^{(1)}_{ij}\1_{ (i,j)\in E_{\mathsf{Excess}}} .
\end{equation}
In other words,  $X^{(1,1)}$  and   $X^{(1,2)}$ can be regarded as networks on the random graphs 
\begin{align}\label{eq: further decomp +}
    A^{(1,1)}_{ij} = A^{(1)}_{ij}\1_{ (i,j)\notin E_{\mathsf{Excess}}},\quad A^{(1,2)}_{ij} = A^{(1)}_{ij}\1_{ (i,j)\in E_{\mathsf{Excess}}}
\end{align}
with edge-weights given by i.i.d.~Weibull distributions  with a shape parameter $\alpha,$ conditioned to be  greater than $b_{n}^{1/\alpha} $ in absolute value.  Then, we write 
\begin{multline}\label{eq: upper tail light 0}
\P\paren{\lambda_{1}(X^{(1)}) \ge 2^{1/\alpha} K_d (\beta/2) (\log n)^{1/\alpha} + 2a_{n}^{1/\alpha} } \\
\le   \P\paren{\lambda_{1}(X^{(1,1)}) \ge 2^{1/\alpha} K_d (\beta/2) (\log n)^{1/\alpha} + a_{n}^{1/\alpha} } +\P\paren{\lambda_{1}(X^{(1,2)}) \ge a_{n}^{1/\alpha} }.
\end{multline}

We claim that under the event $\cE_{1}\cap\cE_{2}$,
\begin{align} \label{315}
    \text{$A^{(1,2)}$ consists of at most $(d-1)^{4{R_n}}$ vertex-disjoint edges}
\end{align} 
and
\begin{align} \label{316}
    \text{every connected component in  $A^{(1,1)}$  is tree}.
\end{align} 
To deduce \eqref{315}, first notice that since the number of edges in every  component $C_{k}$ of $A^{(1)}$ is at most $R_n$ (see \eqref{ce2}), the diameter of $C_k$ is at most $R_n.$ Thus by \eqref{341}, $C_k$ has at most one tree-excess edge {(recall that properties \eqref{341} and \eqref{342} hold for the subgraph $A^{(1)}$ of $A$ as well)}. Also, by \eqref{342}, the number of components  having an tree-excess edge is at most  $(d-1)^{4{R_n}}$. Hence, we obtain \eqref{315}. In addition, \eqref{316} follows from the observation that all tree-excess edges in every  component of $A^{(1)}$ are classified into $A^{(1,2)}$.

~

Now, we deduce that  $\lambda_{1}(X^{(1,2)})$ is  negligible. By Lemma \ref{lem: conditioned alpha power sum asymp} (with $m=1$) and a union bound together with \eqref{315}, for any $x>1$, under the event $\cE_1 \cap \cE_2,$
\begin{align} \label{345}
\P\paren{ \lambda_{1}(X^{(1,2)}) \ge x \mid A^{(1,2)} }
\le (d-1)^{4R_n} \exp( - x^\alpha + b_{n} + O(\log x)).
\end{align}
Setting $x:=a_n^{1/\alpha}$ and  recalling $R_n=\lfloor 3\log n/b_{n} \rfloor$, using \eqref{332} and  \eqref{eq: component size},
\begin{equation}\label{eq: upper tail light 1}
\P\paren{\lambda_{1}(X^{(1,2)}) \ge a_{n}^{1/\alpha} }
\le  { (d-1)^{4\lfloor 3\log n/b_{n} \rfloor} \exp( - a_{n} + b_{n} + o(b_{n}) )
+ o(1) = o(1)},
\end{equation}
where the last estimate follows from the facts $a_{n}\gg \log n/b_{n}$ and $a_n \gg b_n\gg\log\log n$ (see  \eqref{325}).

Next, we bound $\lambda_{1}(X^{(1,1)})$. {Let $T_{1},\cdots,T_{m}$ be connected components in  $X^{(1,1)}$.}  Since
\begin{equation*}
\lambda_{1}(X^{(1,1)}) = \max_{1\le k\le m} \lambda_{1}({T_{k}}),
\end{equation*}
we aim to bound $\lambda_1(T_k)$ for each component $T_{k}$.

Note that under the event $\cE_{2},$  $|V(T_{k})| = |E(T_{k})| +1 \leq \lfloor 3\log n/b_{n} \rfloor +1$. Hence, since $T_k$ is tree  (see \eqref{316}) whose maximum degree is at most $d$,  by Proposition \ref{keyprop} (which is applicable by the condition \eqref{340}), under the event $\cE_{1}\cap\cE_{2}$, 
\begin{align*}
      \P\paren{ \lambda_{1}(T_{k}) \ge 2^{1/\alpha} K_d (\beta/2) (\log n)^{1/\alpha} + a_{n}^{1/\alpha} \mid A^{(1,1)} } = o(n^{-1}).
 \end{align*}
Therefore,
by a union bound over components $T_1,\cdots,T_m$ (note that $m\le n$) together with  \eqref{332}  and \eqref{eq: component size},
\begin{align}  \label{333}
     \P \big( \lambda_{1}(X^{(1,1)}) \ge 2^{1/\alpha} K_d (\beta/2) (\log n)^{1/\alpha} + a_{n}^{1/\alpha} \big) = n \cdot o(n^{-1}) + o(1) = o(1).
\end{align} 
Therefore, by \eqref{331}, \eqref{eq: upper tail light 1} and \eqref{333}, we conclude the proof.

~

\textbf{Case 2.~$\boldsymbol{0 < \alpha \le 1}$.}
We have
\begin{equation*}
\P\paren{\lambda_{1}(X) \ge (\log n)^{1/\alpha} + 3a_{n}^{1/\alpha} }
\le \P\paren{\lambda_{1}(X^{(1)}) \ge (\log n)^{1/\alpha} + 2a_{n}^{1/\alpha} }
+ \P\paren{\lambda_{1}(X^{(2)}) \ge a_{n}^{1/\alpha} }.
\end{equation*} 
Since $\lambda_1(X^{(2)})$ can be  bounded as in \eqref{331}, it is enough to bound $\lambda_1 (X^{(1)}).$

Let us further decompose $X^{(1)}=X^{(1,1)}+X^{(1,2)}$ as  in \eqref{eq: further decomp}, and then  bound
\begin{equation*}
\P\paren{\lambda_{1}(X^{(1)}) \ge (\log n)^{1/\alpha} + 2a_{n}^{1/\alpha} }
\le \P\paren{\lambda_{1}(X^{(1,2)}) \ge a_{n}^{1/\alpha} }
 + \P\paren{\lambda_{1}(X^{(1,1)}) \ge (\log n)^{1/\alpha} + a_{n}^{1/\alpha} }.
\end{equation*}
By the same reasoning as before, we bound $\lambda_{1}(X^{(1,2)}) $ as in \eqref{eq: upper tail light 1}. To bound $\lambda_{1}(X^{(1,1)}),$ 
let $T_{1},\cdots,T_{m}$ be  components in $X^{(1,1)}$. 
By Proposition \ref{keyprop}  (which is applicable by   the condition \eqref{340}), under the event $\cE_{1}\cap\cE_{2}$,
\begin{equation*}
\P\paren{ \lambda_{1}(T_{k}) \ge (\log n)^{1/\alpha} + a_{n}^{1/\alpha} \mid A^{(1,1)} } = o(n^{-1}).
\end{equation*} 
By a union bound together with \eqref{332} and \eqref{eq: component size},
\begin{equation} \label{336}
\P\paren{\lambda_{1}(X^{(1,1)})  \ge  (\log n)^{1/\alpha}+ a_{n}^{1/\alpha} } = n\cdot o(n^{-1}) + o(1) = o(1),
\end{equation}
Therefore, combining the above displays, we conclude the proof.

\end{proof}

Now, we establish Theorem \ref{thm: LLN}.

\begin{proof}[Proof of Theorem \ref{thm: LLN}]

{Recalling the relation \eqref{relation} and Lemma \ref{formula k},}
the lower bounds in
 \eqref{640} and \eqref{641} along with the upper bound in Proposition \ref{prop: upper tail upper bound}  immediately conclude the proof. 
\end{proof}

\section{Localization of the top eigenvector}\label{sec4}

In this section, we establish that the top eigenvector is localized with high probability. Before proceeding with the proof, we provide some main steps. Without loss of generality, we assume that the top eigenvector $\mathbf{f}=(f_{i})_{i \in [n]}$ has a unit $\ell^2$-norm, i.e.~$\norm{\mathbf{f}}_2=1.$
\begin{enumerate}
    \item  Let us first consider the case $\alpha>2$. As in the proof of Proposition \ref{prop: upper tail upper bound}, 
 we decompose  $X = X^{(1,1)}+X^{(1,2)}+X^{(2)}$, where  $X^{(1,2)}$ and $X^{(2)}$ are spectrally negligible and $X^{(1,1)}$ consists of vertex-disjoint trees. 
 By the lower bound on $\lambda_1(X)$,  for any $\e>0$, the contribution from the network $X^{(1,1)}$ is at least $(1-\e) h_{d}(\alpha)(\log n)^{1/\alpha}.$

 \item For each tree-component {$T_k$}  in $X^{(1,1)}$, define $x_k:= \sum_{i \in V(T_k)} |f_i|^2$ and $S_k$ to be the $\ell^\alpha$-norm of edge-weights in $T_k$. By H\"older inequality, the contribution from $T_k$ is bounded by
 \begin{align}\label{jl: gap} 
     \Big(  \sum_{(i,j)\in \overrightarrow{E\;}(T_k)} |f_{i}f_{j}|^{\beta} \Big)^{\frac{1}{\beta}}  \Big( \sum_{(i,j)\in \overrightarrow{E\;}(T_k)} |W_{ij}|^{\alpha} \Big)^{\frac{1}{\alpha}} 
     \le K_d(\beta/2)x_k\cdot  2^{1/\alpha}  S_k  \overset{\eqref{relation}}{=}  h_d(\alpha) x_k S_k
 \end{align}
 ($\beta$ is {the H\"older} conjugate of $\alpha$).
If some component $T_k$, whose $\ell^\alpha$-norm $S_k$ is at most $(1-\e^{1/2})(\log n)^{1/\alpha}$, has a non-negligible value $x_k$, then the contribution from the network $X^{(1,1)}$ cannot be larger than the desired value mentioned in Step (1). Hence, $\ell^2$-mass of top eigenvector is essentially concentrated on the components whose $S_k$-value is at least $(1-\e^{1/2})(\log n)^{1/\alpha}$. By a tail estimate of $S_k$, the number of  such components is at most $n^{\alpha \e^{1/2}}$.
 Since the size of every component in $X^{(1,1)}$ is relatively small, we establish the localization on $n^{\alpha \e^{1/2} + o(1)}$ vertices.

 \item If edge-weights possess a heavier tail (i.e.~$0<\alpha<2$), then one can refine  the above arguments by working on the smaller fluctuation scale.  
 For $a_n  \ll \log n$ defined in \eqref{eq: a_n}, with the aid of  a sharper lower bound \eqref{620}, we show that   the contribution from $X^{(1,1)}$ is at least $  (\log n)^{1/\alpha} - \e a_n^{1/\alpha}$.  
 By the same reasoning as above, the top eigenvector is essentially concentrated on the components whose $S_k$-value is at least $ (\log n)^{1/\alpha} - \e^{1/2}a_n^{1/\alpha}$, {and the number of such such components is at most $\exp\big(\alpha\ep^{1/2} (\log n)^{\zeta_{\alpha}} \big)$ for some $\zeta_{\alpha}\in(0,1)$.
 } 

 \item  There is another scenario which yields a gap in the inequality \eqref{jl: gap}. Let $F_k$ be the first factor in LHS of \eqref{jl: gap}. The existence of a component $T_{k}$, whose $F_k$-value is at most $(1-\ep)K_{d}(\beta/2) x_{k}$, with a non-negligible value $x_{k}$ 
 contradicts Step (1).  
By the structure of the ``near''-maximizer of the variational problem \eqref{general} (see Lemma \ref{lemma 4.1}), in each component $T_k$, the top eigenvector is localized on the single edge.
 
\end{enumerate}

\begin{proof} [Proof of Theorem \ref{thm: localization}]

We use the same notations as in the proof of Proposition \ref{prop: upper tail upper bound}. Recall that  for a  small enough constant $\kappa>0,$ 
\begin{align}  \label{610}
     a_n \coloneqq  \begin{cases}
        (\log n)^{\frac{\alpha+1}{2\alpha+1} + \kappa} \quad &\alpha>1, \\
        (\log n)^{\frac{2}{\alpha+2} + \kappa} &0<\alpha\leq 1
    \end{cases}
 \ \text{ and }  \quad b_n \coloneqq  \begin{cases}
         (\log n)^{\frac{\alpha}{2\alpha+1}} \quad &\alpha>1, \\
        (\log n)^{\frac{\alpha}{\alpha+2}} &0<\alpha\leq 1
    \end{cases}
 \end{align} 
 ($b_n$ denotes the  truncation parameter of edge-weights, see \eqref{611}).
 Then, these sequences satisfy the relations \eqref{325} and \eqref{340}. 
 
~

We first recall the lower bound on $\lambda_1(X).$ Let $\e>0$ be a small constant.
By \eqref{640},  
for $\alpha>2$, 
 \begin{equation} \label{524--}
          \P(\lambda_1(X) \ge (1 - \ep)h_d(\alpha)(\log n)^{1/\alpha}) \ge 1-{\exp( - n^{\ep/4})},
 \end{equation}
and in the case $0<\alpha \le 2,$ {as in \eqref{620} (with the fluctuation $(\log n)^{\tau/\alpha}$ replaced by $\e a_n^{1/\alpha}$),}
 \begin{align}\label{eq: lower bound heavy}
     \P(\lambda_1(X) \ge  (\log n)^{1/\alpha} - \e a_n^{1/\alpha}) \ge 1- {\exp( - n^{   \ep \min(\alpha,1) \cdot a_{n}^{1/\alpha}(\log n)^{-1/\alpha}+ o(1)}  )} \ge 1-\exp(-\e^\alpha a_n),
 \end{align}
 where we recall the quantity $a_n$ in  \eqref{610}, in the last inequality.

 ~

To  control $\lambda_1(X)$ from above, let $\mathbf{f}=(f_{i})_{i \in [n]}$ be a (unit) top eigenvector of $X$ and write
\begin{align} \label{600}
    \lambda_1(X) = \sum_{(i,j)\in\overrightarrow{E}(A) }X_{ij}f_{i}f_{j} = \sum_{(i,j)\in\overrightarrow{E}(A)}X^{(1,1)}_{ij}f_{i}f_{j}+\sum_{(i,j)\in\overrightarrow{E}(A)}X^{(1,2)}_{ij}f_{i}f_{j}+\sum_{(i,j)\in\overrightarrow{E}(A)}X^{(2)}_{ij}f_{i}f_{j} 
\end{align}
(recall \eqref{611} and \eqref{eq: further decomp} for the decomposition).
By a variational formula together with \eqref{x2} and the first condition in \eqref{325}, 
\begin{equation}\label{51200}
    \sum_{(i,j)\in\overrightarrow{E}(A) }X^{(2)}_{ij}f_{i}f_{j} \le \lambda_{1}(X^{(2)}) \le b_{n}^{1/\alpha}d \ll \e a_{n}^{1/\alpha}.
\end{equation} 
Also, by \eqref{345} (with $x\coloneqq \e a_n^{1/\alpha}$, recall $R_n=\lfloor 3\log n/b_{n} \rfloor$) along with \eqref{332} and \eqref{eq: component size},  
\begin{align} \label{512--}
\P\Big(\sum_{(i,j)\in\overrightarrow{E} (A)} & X^{(1,2)}_{ij}f_{i}f_{j}  \ge   \e   a_{n}^{1/\alpha} \Big) \le  \P\big(  \lambda_1(X^{(1,2)}) \ge   \e   a_{n}^{1/\alpha} \big) \nonumber \\
&\le    (d-1)^{4\lfloor 3\log n/b_{n} \rfloor}  \exp( -\e^\alpha  a_{n} + b_{n} +  O(\log a_n) ) + 2n^{-1/2} \le \exp( -\e^\alpha  a_{n} /2 ),
    \end{align}
where we used $\log n \gg a_{n}\gg b_{n}$ and $a_{n}\gg \log n/ b_{n}$ (see  \eqref{325}) in the last inequality.

~
 
To analyze  $X^{(1,1)}$, consisting of vertex-disjoint trees
  $T_1,\cdots,T_m$,  for each $k\in [m]$, define
\begin{equation}\label{eq: def of x_k}
x_{k} \coloneqq \sum_{i\in V(T_{k})}|f_{i}|^{2} . 
\end{equation}
Since $\norm{\textup{\textbf{f}}}_2=1,$ we have  $\sum_{k=1}^m x_k = 1.$ As in the proof of Proposition \ref{keyprop}, let $\eta_{n}>0$ be a truncation level  satisfying  \eqref{312}:
\begin{align} \label{cond}
    a_{n}\eta_{n}^{-\min\{2,2\alpha\}} \gg\log n,\quad  (\log n)^{1-1/\alpha}a_{n}^{1/\alpha}\gg \eta_{n}^{-2}b_n
\end{align}
(such $\eta_n>0$ exists due to the  condition \eqref{340}).
For each $k\in [m]$, define
\begin{align} \label{eq: I_k}
    I_k \coloneqq \{ i\in V(T_k) : |f_{i}|^{2}<\eta_{n}^{2}x_{k}\}.
\end{align} 
Then,   by \eqref{eq: def of x_k},
\begin{align} \label{528--}
     |V(T_k) \setminus I_k| \le \eta_{n}^{-2}.
\end{align} 
We write 
    \begin{align} \label{525--}
\sum_{(i,j)\in\overrightarrow{E}(A)}X^{(1,1)}_{ij}f_{i}f_{j} & = \sum_{k=1}^{m}\sum_{\substack{(i,j)\in \overrightarrow{E}(T_{k})\\ i,j\in I_k}}X^{(1,1)}_{ij}f_{i}f_{j}  +\sum_{k=1}^{m}\sum_{\substack{(i,j)\in \overrightarrow{E}(T_{k})\\ i\text{ or }j\notin I_k}}X^{(1,1)}_{ij}f_{i}f_{j} 
 \nonumber \\
 &=: \sum_{k=1}^{m} \lambda_k^{(1)}+ \sum_{k=1}^{m} \lambda_k^{(2)}. 
    \end{align}
     
\textbf{Step 1. Proof of $\ell^2$-localization \eqref{2} for $\alpha>1$.} As in the proof of Proposition \ref{keyprop}, the first term above is shown to be negligible. To see this,
 let  $\beta>1$ be the  H\"older conjugate of $\alpha$. Then, 
similarly as in \eqref{334},  for each $k\in [m],$
\begin{align*}
    2\sum_{\substack{(i,j)\in E(T_{k})\\ i,j\in I_k}} |f_{i}f_{j}|^{\beta}
\le   \eta_{n}^{2(\beta-1)}x_{k}^{\beta-1} \sum_{(i,j)\in E(T_{k})} (|f_{i}|^{2}+|f_{j}|^{2}) \leq d \eta_{n}^{2(\beta-1)} x_k^{\beta}.
\end{align*}
 Under the event $\cE_2$ (recall its definition in   \eqref{ce2}),
 \begin{align} \label{111}
     |E(T_k)| \le \frac{3\log n}{b_n} \ll a_n \eta_{n}^{-2}  
 \end{align} 
 (see the first condition in \eqref{cond}). Hence,
 by H\"older inequality together with Lemma \ref{lem: conditioned alpha power sum asymp},
\begin{align*}
    \P\big( \lambda_k^{(1)}  \ge \e  x_{k} a_{n}^{1/\alpha} \mid A^{(1,1)}  \big) 
&\le  \P\Big({\sum_{
(i,j)\in E(T_{k})
} |X^{(1,1)}_{ij}|^\alpha}  \ge  2^{-1} \e^\alpha a_n \cdot  d^{-\alpha/\beta}   \eta_{n}^{-2} \mid A^{(1,1)}  \Big)  \\
&\le \exp\big(-2^{-1} \e^\alpha d^{-\alpha/\beta}  a_{n}\eta_{n}^{-2} + O(\log n) \big).
\end{align*}
Using the fact $\sum_{k=1}^mx_k=1$ together with a union bound, using  \eqref{eq: component size} and recalling  $a_{n}\eta_{n}^{-2} \gg \log n$, 
    \begin{align} \label{527--}
    	\P\Big(\sum_{k=1}^{m} \lambda_k^{(1)}  \ge \e a_{n}^{1/\alpha}   
     \Big) \le 2n^{-1}.  
    \end{align}
Next, we bound the second term in \eqref{525--}. Let
\begin{equation} \label{40}
S_{k}\coloneqq
\Big(\sum_{\substack{(i,j)\in E(T_{k})\\ i\text{ or }j\notin I_k}}|X^{(1,1)}_{ij}|^{\alpha}\Big)^{\frac{1}{\alpha}},\quad  F_k\coloneqq \Big(\sum_{\substack{(i,j)\in \overrightarrow{E}(T_{k})\\ i\text{ or }j\notin I_k}}|f_{i}f_{j}|^{\beta}\Big)^{\frac{1}{\beta}}
\end{equation}
(note that in the definition of $ F_k,$ the summation is taken over directed edges, this will be convenient when relating to the quantity $K_d$ in \eqref{k}).  
Then, by H\"older inequality,
\begin{equation} \label{601}
     \lambda_k^{(2)} = \sum_{\substack{(i,j)\in \overrightarrow{E}(T_{k})\\ i\text{ or }j\notin I_k}}X^{(1,1)}_{ij}f_{i}f_{j}  \le   2^{1/\alpha} S_kF_k.
\end{equation} 
Also, since $T_k$ can be embedded into the infinite $d$-regular tree, by the definition of $K_d$ in \eqref{k}, 
\begin{align} \label{665}
    F_k\le K_d(\beta/2)x_k
\end{align}
(the quantity $x_k$ arises from the identity $\sum_{i\in V(T_{k})}|f_{i}|^{2}=x_k$). 

~

Setting the quantity $\widetilde{a}_{n}$ by
\begin{equation} \label{622}
    \widetilde{a}_{n}\coloneqq
    \begin{cases*}
     a_{n} \quad & $1<\alpha\le 2$,\\
   \log n \quad & $\alpha > 2$,
    \end{cases*}
\end{equation}
we decompose the collection of components according to the values of $S_k$ and $F_k$:
\begin{align*} 
\mathcal{K}_{1,1} &\coloneqq \{k\in[m]: S_{k} > (\log n)^{1/\alpha} -  \e^{1/2} \widetilde{a}_{n}^{1/\alpha}, \  F_k > \big(1-\ep^{1/2} \big) K_d(\beta/2)x_{k}\}, \\ 
\mathcal{K}_{1,2} &\coloneqq \{k\in[m]: S_{k} > (\log n)^{1/\alpha} - \e^{1/2} \widetilde{a}_{n}^{1/\alpha},  \ F_k \le (1-\ep^{1/2})K_d(\beta/2)x_{k}\}, \\\mathcal{K}_2 &\coloneqq \{k\in[m]: S_{k} \le (\log n)^{1/\alpha} - \e^{1/2} \widetilde{a}_{n}^{1/\alpha}\}. 
\end{align*}
Let us write
\begin{equation} \label{517vvv}
x_{\mathcal{K}_{1,1}} \coloneqq \sum_{k\in \mathcal{K}_{1,1}}x_{k} ,\quad x_{\mathcal{K}_{1,2}} \coloneqq \sum_{k\in \mathcal{K}_{1,2}}x_{k},\quad   x_{\mathcal{K}_2} \coloneqq \sum_{k\in \mathcal{K}_2}x_{k}.
\end{equation}
From now on, to alleviate notations, we extend the function $h_d$ in Theorem \ref{thm: LLN} to 
 $(1,\infty)$ as follows: $h_d(\alpha) \coloneqq 1$ for all $\alpha \in (1,2].$ Then, by \eqref{relation} (for $\alpha>2$) and Lemma \ref{formula k},
\begin{align*}
    h_d(\alpha) = 2^{1/\alpha} K_d(\beta/2),\quad \forall \alpha>1.
\end{align*}
This together with  \eqref{601}  and  \eqref{665} imply that
\begin{align*}
    \lambda_k^{(2)} \le  
    \begin{cases}
        h_d(\alpha) x_kS_k\quad &k\in \mathcal{K}_{1,1}\cup \mathcal{K}_{2},\\ 
         (1-\ep^{1/2}) h_d(\alpha) x_kS_k\quad  &k\in \mathcal{K}_{1,2}.
    \end{cases}
\end{align*}
Hence, under the event   $\{\max_{1\le k\le m}S_{k} < (\log n)^{1/\alpha} + \e a_{n}^{1/\alpha} \},$ 
\begin{multline} \label{530}
      \sum_{k=1}^m \lambda_k^{(2)}
    \le h_d(\alpha)\big[ ((\log n)^{1/\alpha} + \ep a_{n}^{1/\alpha})x_{\mathcal{K}_{1,1}} 
    + (1-\ep^{1/2})((\log n)^{1/\alpha} + \ep a_{n}^{1/\alpha})x_{\mathcal{K}_{1,2}} \\
    + ((\log n)^{1/\alpha} - \e^{1/2}  \widetilde{a}_{n}^{1/\alpha} )x_{\mathcal{K}_{2}}\big].
\end{multline}
We verify that  under the event $\cE_2$,
    \begin{align} \label{520--}
    	\P\big( \max_{1\le k\le m}S_{k} \ge (\log n)^{1/\alpha} + \e a_{n}^{1/\alpha} \mid A^{(1,1)} \big)  \le { \exp( -  \ep \alpha (\log n)^{1-1/\alpha}a_{n}^{1/\alpha} /2 )}.
    \end{align}
Since the maximum degree of $T_k$ is at most $d$, by \eqref{528--}, for each $k\in [m],$ 
\begin{align} \label{556}
     | \{(i,j)\in E(T_k): i\text{ or }j\notin I_k \}| \le d \eta_{n}^{-2}.
\end{align}
   Hence, by Lemma \ref{lem: conditioned alpha power sum asymp}  together with a union bound over all possible subsets $I_k^c$ in $V(T_k)$ and the inequality $(x+y)^\alpha \ge x^\alpha + \alpha x^{\alpha-1} y$, under the event $\cE_2$ (note that $|V(T_k)| = |E(T_k)|+1 \le 3\log n / b_n+1$, see \eqref{111}), 
\begin{align} \label{666}
    \P\Big( S_{k} & \ge (\log n)^{1/\alpha} + \e a_{n}^{1/\alpha}  \mid A^{(1,1)}  \Big)   \nonumber \\
&     \le  { \Big( \frac{3\log n}{b_{n}} +1 \Big)^{\eta_{n}^{-2}} \cdot } \exp\big( - \log n - \e \alpha (\log n)^{1-1/\alpha}a_{n}^{1/\alpha} + d \eta_{n}^{-2}b_{n} + O( \eta_{n}^{-2} \log \log n) \big)
\end{align}
(Lemma \ref{lem: conditioned alpha power sum asymp}  is applicable since $\log n \gg \eta_n^{-2}$, see \eqref{cond} and recall $a_n \ll \log n$). By \eqref{entropy} and    recalling $(\log n)^{1-1/\alpha}a_{n}^{1/\alpha} \gg  \eta_{n}^{-2}b_n$ (see \eqref{cond}), the above quantity is bounded by $$ \exp\big( - \log n - \e \alpha (\log n)^{1-1/\alpha}a_{n}^{1/\alpha} /2\big).$$
By a union bound over all components $T_1,\cdots,T_m$ (note that $m\le n$), we obtain \eqref{520--}.

~

Therefore,   by 
 \eqref{51200}, \eqref{512--}, \eqref{527--} and \eqref{530}, with probability at least $1-\exp(-\e^\alpha a_n/3)$ (recall $a_n \ll \log n$),
\begin{multline} \label{605}  
      \lambda_1(X)
    \le h_d(\alpha)\big[ ((\log n)^{1/\alpha} + \ep a_{n}^{1/\alpha})x_{\mathcal{K}_{1,1}} 
    + (1-\ep^{1/2})((\log n)^{1/\alpha} + \ep a_{n}^{1/\alpha})x_{\mathcal{K}_{1,2}} \\
    + ((\log n)^{1/\alpha} -  \e^{1/2}\widetilde{a}_{n}^{1/\alpha})x_{\mathcal{K}_{2}}\big] + 3 \e a_{n}^{1/\alpha} .
\end{multline} 
We combine this with the lower bound \eqref{524--} (for $\alpha>2$) and \eqref{eq: lower bound heavy} (for $1<\alpha\le 2$) to show that   $x_{\mathcal{K}_{1,1}}$ is close to 1. Indeed,  when $\alpha>2$, recalling the quantity $\widetilde{a}_{n}$ in \eqref{622} and using $x_{\mathcal{K}_{1,1}}=1-x_{\mathcal{K}_{1,2}}-x_{\mathcal{K}_{2}}$, 
with probability at least $1-\exp(-\e^\alpha a_n/4)$,
\begin{align*}
    - \ep (\log n)^{1/\alpha} \le C \ep a_{n}^{1/\alpha}  - (\ep^{1/2}(\log n)^{1/\alpha} +\ep^{3/2}a_{n}^{1/\alpha}) x_{\mathcal{K}_{1,2}}
    - (\ep a_{n}^{1/\alpha}+  \ep^{1/2}(\log n)^{1/\alpha}) x_{\mathcal{K}_{2}} .
\end{align*}
Using $a_n \ll \log n$, this implies that for {sufficiently large $n,$} 
\begin{equation}\label{conclusion44}
    x_{\mathcal{K}_{1,2}} + x_{\mathcal{K}_{2}} \le 10\e^{1/2}.
\end{equation}
Also in the case $1<\alpha\le 2$, by similar calculations, we deduce the above bound as well.  
Hence, 
\begin{align}\label{516}
\P(x_{\mathcal{K}_{1,1}} \ge 1 - 10\e^{1/2}) \ge
 1-\exp(-\e^\alpha a_n/4) \overset{\eqref{610}}{\ge}    1-\exp(-(\log n)^{1/2})/2,  
\end{align}
i.e.~$\ell^2$-mass of the top eigenvector is essentially concentrated on the components in $\mathcal{K}_{1,1}.$

~

Next, we establish that $|\mathcal{K}_{1,1}|$ is relatively small.
Note that although edge-weights are conditionally independent given $A^{(1,1)}$, $S_k$s may \emph{not} be since $I_k$s are random subsets of $V(T_k)$. To detour this problem, observe that conditionally on $A^{(1,1)}$, 
 for any collection of \emph{deterministic} subsets  $J_{\ell_1}\subseteq V(T_{\ell_1}),\cdots, J_{\ell_t}\subseteq V(T_{\ell_t})$ ($1\le t\le m$, $ 1\le \ell_1<\cdots<\ell_t\le m$) such that $|J_{\ell_k}|\le\eta_{n}^{-2}$ for $k=1,\cdots,t$, by  Lemma \ref{lem: conditioned alpha power sum asymp},
\begin{multline*}
    \P \Big( \Big(\sum_{\substack{(i,j)\in E(T_{\ell_k}),  \  i\text{ or }j\in J_{\ell_k}}}|X^{(1,1)}_{ij}|^{\alpha}\Big)^{\frac{1}{\alpha}}
    > (\log n)^{1/\alpha} -  \e^{1/2}  \widetilde{a}_{n}^{1/\alpha}, \quad \forall k=1,\cdots,t  \mid A^{(1,1)} \Big) \\
    \le \Big(\exp\big( -\log n + \alpha (\log n)^{1-1/\alpha} \e^{1/2}  \widetilde{a}_{n}^{1/\alpha} + d\eta_{n}^{-2}b_{n} + O( \eta_{n}^{-2} \log \log n) \big)\Big)^{t},
\end{multline*}
where we used $| \{(i,j)\in E(T_{\ell_{k}}): i\text{ or }j \in J_{\ell_{k}} \}| \le d \eta_{n}^{-2}$ (similarly as in \eqref{556})
and the inequality $(x-y)^\alpha \ge x^\alpha - \alpha x^{\alpha-1} y$ 
 ($\forall x\ge y \ge  0$ and $\alpha>1$).
Hence, using the fact $\eta_{n}^{-2}b_n\ll (\log n)^{1-1/\alpha}\widetilde{a}_{n}^{1/\alpha} $ (see \eqref{cond} and recall $\widetilde{a}_{n} \ge a_n$), by a union bound over all such possible collections $\{J_{\ell_k}\}_{k=1,\cdots,t}$, under the event $\cE_2$, 
\begin{multline*} 
    \P(|\mathcal{K}_{1,1}|\ge t  \mid A^{(1,1)}  )
    \le 
    {\binom{m}{t}} \cdot \Big( \frac{3\log n}{b_{n}}  +1 \Big)^{\eta_{n}^{-2} t} \cdot 
    \Big(\exp\big( -\log n +1.5  \e^{1/2}  \alpha (\log n)^{1-1/\alpha}  \widetilde{a}_{n}^{1/\alpha}   \big) \Big)^{t}.
\end{multline*}
By \eqref{entropy} along with the fact $t ! \ge (t/e)^t$ and $m\le n,$ for sufficiently large $n$,
\begin{equation} \label{604}
	\P(|\mathcal{K}_{1,1}|\ge t   \mid A^{(1,1)} ) \le (e/t)^t \exp( 2\alpha \e^{1/2}   (\log n)^{1-1/\alpha} \widetilde{a}_{n}^{1/\alpha} t ).
\end{equation}
In particular, setting $t \coloneqq \lfloor \exp( 3\alpha \e^{1/2}(\log n)^{1-1/\alpha} \widetilde{a}_{n}^{1/\alpha} ) \rfloor$,
\begin{align} \label{606}
    \P(|\mathcal{K}_{1,1}|\ge \lfloor \exp( 3\alpha \e^{1/2}(\log n)^{1-1/\alpha} \widetilde{a}_{n}^{1/\alpha} ) \rfloor   \mid A^{(1,1)} ) \le   \exp(-e^{3\alpha \e^{1/2}   (\log n)^{1-1/\alpha} \widetilde{a}_{n}^{1/\alpha}} ).
\end{align} 
Since $|V(T_k)|  = |E(T_k)| +1\le \lfloor3\log n / b_{n}\rfloor+1$  under the event $\cE_{2}$ (see \eqref{111}), by \eqref{eq: component size},
with probability at least $1-2n^{-1}$,
\begin{align} \label{612}
|\cup_{k\in \mathcal{K}_{1,1} } V(T_k)|\le 
    \begin{cases}
         \exp(4\alpha \e^{1/2} \log n)& \alpha > 2,\\
          \exp(4\alpha \e^{1/2} (\log n)^{ \frac{2\alpha}{2\alpha+1} + \frac{\kappa}{\alpha} } ) & 1<\alpha\le 2.
    \end{cases}
\end{align}
Therefore, by taking a small  $\kappa>0$ and adjusting the value of $\e>0$, equipped with \eqref{516} and \eqref{606}, we establish \eqref{2} with  $\mathcal{I}:= \cup_{k\in \mathcal{K}_{1,1} } V(T_k)$, in the case $\alpha>1$.

 ~

\textbf{Step 2. Localization on vertex-disjoint edges for $1<\alpha<2$.} We show that  when $1<\alpha<2$, the top eigenvector $\mathbf{f}$ is essentially localized on only few number of vertex-disjoint edges. Recall that in \eqref{516}, we established that with high probability, most of the $\ell^2$-mass of $\mathbf{f}$ is concentrated on the components in $\mathcal{K}_{1,1}$:
\begin{equation}\label{abc1}
    \sum_{k\in \mathcal{K}_{1,1}} \sum_{i\in V(T_{k})} |f_{i}|^{2} =  \sum_{k\in \mathcal{K}_{1,1}} x_{k}  \ge 1- 10\e^{1/2}.
\end{equation}
For $k\in \mathcal{K}_{1,1}$,  
\begin{equation*}
    \Big(\sum_{(i,j)\in \overrightarrow{E}(T_{k})}|f_{i}f_{j}|^{\beta}\Big)^{\frac{1}{\beta}} \ge F_k \ge \big(1-\ep^{1/2} \big) K_d(\beta/2)x_{k} =   \big(1-\ep^{1/2} \big) 2^{\frac{1}{\beta}-1 } x_{k} ,
\end{equation*}
where we used Lemma \ref{formula k} together with the fact $\beta \ge 2$ in the last identity.
Hence, recalling that $T_k$ is tree and $\sum_{i\in V(T_k)} |f_i|^2 = x_k$,
by Lemma \ref{lemma 4.1} (note that our vector $\textbf{f}$ satisfies the $\ell^2$-constraint instead of the $\ell^1$-constraint), there is a constant {$c=c(\alpha)>0$} such that  for each $k\in \mathcal{K}_{1,1}$, there exists an edge $(i^{(k)},j^{(k)})\in E(T_{k})$ satisfying
\begin{equation*}
    |f_{i^{(k)}}|^{2} + |f_{j^{(k)}}|^{2} \ge (1-{c\ep^{1/4}}) x_{k}.
\end{equation*}
Combining this with \eqref{abc1},
\begin{equation*}
    \sum_{k\in \mathcal{K}_{1,1}} 
 ( |f_{i^{(k)}}|^{2} + |f_{j^{(k)}}|^{2} )  \ge (1-{c\ep^{1/4}}) \sum_{k\in \mathcal{K}_{1,1}} x_{k}  \ge  (1-{c\ep^{1/4}})(1-10\e^{1/2}) \ge 1-{(c+10)\e^{1/4}}.  
\end{equation*} 
Noting that edges  $\{(i^{(k)},j^{(k)})\}_{k\in \mathcal{K}_{1,1}}$ are vertex-disjoint, recalling the bound on $|\mathcal{K}_{1,1} |$ in \eqref{606} and the quantity $\widetilde{a}_{n}$ in \eqref{622},
  we conclude the proof by adjusting the value of $\e>0$ and taking a small $\kappa>0$.

~

\textbf{Step 3. Localization for $\mathbf{0\boldsymbol{<\alpha\le}1}$.} Since the argument is similar as in the case $\alpha>1$, except the application of H\"older  inequality  in \eqref{601}, we briefly sketch a proof.
To bound $\lambda_k^{(1)}$, similarly as in \eqref{eq: low value part} but recalling that the threshold value is $\eta_n^2x_k$  (see \eqref{eq: I_k}),
\begin{equation*}
    \lambda_k^{(1)} \le  2 \eta_{n}^{2}x_{k} \Big( \sum_{\substack{(i,j)\in {E}(T_{k})\\ i,j\in I_k}}|X^{(1,1)}_{ij}|^{\alpha} \Big)^{\frac{1}{\alpha}}.
\end{equation*}
Thus, by Lemma \ref{lem: conditioned alpha power sum asymp} and a union bound, using  the condition \eqref{cond},
there exists a  constant $c = c(\alpha,\e)>0$ such that under the event $\cE_2$,
    \begin{align} \label{527--vv}
    	\P\Big(\sum_{k=1}^{m} \lambda_k^{(1)}  > \ep a_{n}^{1/\alpha}  \mid A^{(1,1)}  \Big)
     \le n \exp\Big(- 2^{-\alpha} \ep^{\alpha} \eta_{n}^{-2\alpha} a_{n} + O(\log n)\Big) \le { \exp( - c \eta_{n}^{-2\alpha} a_{n} ) }.
    \end{align} 
To bound $\lambda_k^{(2)},$ define 
 \begin{equation*}
     M_{k} \coloneqq \max_{(i,j)\in E(T_{k})} (|f_{i}|^{2} + |f_{j}|^{2}).
 \end{equation*}
Using the fact $|f_if_j| \le M_k/2$ for any $(i,j)\in E(T_{k})$, 
\begin{align}
     \lambda_k^{(2)} = 2  \sum_{\substack{(i,j)\in E(T_{k})\\ i\text{ or }j\notin I_k}}X^{(1,1)}_{ij}f_{i}f_{j} \le M_k \sum_{\substack{(i,j)\in E(T_{k})\\ i\text{ or }j\notin I_k}} |X^{(1,1)}_{ij} |\le  M_kS_k
 \end{align}
(recall that $S_k$ is defined in  \eqref{40}), where in the last inequality we used the  monotonicity of $\ell^{p}$-norm in $p$ 
 (recall that $0<\alpha\le 1$).

Now,  define
\begin{align*}
\mathcal{K}_{1,1} &\coloneqq \{k\in[m]: S_{k} > (\log n)^{1/\alpha} - \e^{1/2} a_{n}^{1/\alpha},  \ M_{k} > (1- {\ep^{1/2}})x_{k}\}, \\ 
\mathcal{K}_{1,2} &\coloneqq \{k\in[m]: S_{k} > (\log n)^{1/\alpha} -  \e^{1/2} a_{n}^{1/\alpha},  \ M_{k} \le (1- {\ep^{1/2}})x_{k}\}, \\
 \mathcal{K}_2 &\coloneqq \{k\in[m]: S_{k} \le (\log n)^{1/\alpha} - \e^{1/2} a_{n}^{1/\alpha} \}.
\end{align*}
With the aid of {inequality \eqref{3a77}}, 
similarly as in \eqref{666}, 
   \begin{align}  
    	\P\big( \max_{1\le k\le m}S_{k} \ge (\log n)^{1/\alpha} + \e a_{n}^{1/\alpha} \mid A^{(1,1)} \big)  \le
    {\exp( - \ep \alpha (\log n)^{1-1/\alpha}a_{n}^{1/\alpha}/2 )}.
    \end{align}
Hence, as in the case $\alpha>1$, we deduce that with probability at least $1 - \exp( -\e^\alpha a_n/3)$,
\begin{multline} 
 \lambda_1(X)
 \le \big( (\log n)^{1/\alpha} + \ep a_{n}^{1/\alpha} \big)x_{\mathcal{K}_{1,1}} 
 + (1-{\ep^{1/2}})\big( (\log n)^{1/\alpha} + \ep a_{n}^{1/\alpha} \big)x_{\mathcal{K}_{1,2}} \\+ \big( (\log n)^{1/\alpha} - \e^{1/2} a_{n}^{1/\alpha} \big)x_{\mathcal{K}_2} + 3\ep a_{n}^{1/\alpha}.
\end{multline}
This along with the lower bound \eqref{eq: lower bound heavy} imply 
that  
\begin{align*}
    \P(x_{\mathcal{K}_{1,1}} \ge 1 - 10\e^{1/2}) \ge  1-\exp(-\e^\alpha a_n/4) \overset{\eqref{610}}{\ge}    1-\exp(- (\log n)^{ 1/2 })/2.
\end{align*} 
 In addition, similarly as in \eqref{606},  
 \begin{align*}
       \P(|\mathcal{K}_{1,1}|
        \ge  \lfloor \exp( 3\alpha \e^{1/2}  (\log n)^{1-1/\alpha}  a_{n}^{1/\alpha} ) \rfloor ) \le 2n^{-1}.
 \end{align*} 
Moreover, for each $k\in \mathcal{K}_{1,1}$, there exists an edge $(i^{(k)},j^{(k)}) \in E(T_k)$ satisfying $|f_{i^{(k)}}|^{2} + |f_{j^{(k)}}|^{2} > (1-{\ep^{1/2}})x_{k}$. This together with \eqref{516} implies that
\begin{equation*}
    \sum_{k\in \mathcal{K}_{1,1}} 
 ( |f_{i^{(k)}}|^{2} + |f_{j^{(k)}}|^{2} ) \ge (1- {\ep^{1/2}})(1-{10\ep^{1/2}}) \ge 1-11\e^{1/2},
\end{equation*}
concluding the proof by the arbitrariness of $\e>0$.

\end{proof}

\begin{remark}[Structure of  the top eigenvector in the case {$\alpha \ge 2$}]\label{local}
The conclusion of Step 1 in the above proof shows that the top eigenvector is essentially supported on the components $T_k$ whose $F_k$-value (see \eqref{40} for the definition) is close to {$K_d(\beta/2)x_{k}$}.  We precisely describe a structure of the top eigenvector only in the case $0<\alpha < 2$, since the exact form of the maximizer of the variational problem for $K_d(\beta/2)$ is not explicitly known when $\alpha \ge 2$.
 
\end{remark}

\section{Properties of the LLN constant}\label{sec5}
In this section, we present some properties of the function $h_d$ which describes the law of large numbers behavior of the largest eigenvalue.

\subsection{$\mathbf{h_d(\boldsymbol{\alpha})>1}$ for $\mathbf{\boldsymbol{\alpha}>2}$} \label{h_d alpha>2}
We show that the LLN constant for the (normalized) largest eigenvalue in the case of the light-tail edge-weights (i.e.~$\alpha>2$) is strictly greater than that of heavy-tail cases (i.e.~$\alpha\le 2$). This accounts for the occurrence  of the phase transition  at $\alpha=2$, i.e.~when edge-weights possess a Gaussian tail.

{We enumerate vertices in the infinite $d$-regular tree by $\{0,1,\cdots\}$ in the breadth-first search sense, i.e.~index 0 denotes the root,  indices $1,2,\cdots,d$ denote the children of $0$, and so on.} In the case $\alpha>2$, we aim to find a particular vector $\textbf{u} = (u_i)_{u=0,1,\cdots}$  (with $\ell^1$-norm equal to 1)  which makes the functional in the variational problem \eqref{h} strictly greater than 1.
Let us assume that
 $u_0=1/2$, $u_1=\cdots=u_d=1/2d$ and all other $u_i$s are zero (i.e.~supported on the levels 0 and 1 of the $d$-regular tree, in other words a ``star'' graph of degree $d$). Then, we deduce that  for any $\alpha>2,$
 \begin{align*}
    h_d(\alpha) \ge 2 \Big(d\cdot \Big(\frac{1}{2}
    \cdot \frac{1}{2d}\Big)^{\frac{\alpha}{2(\alpha-1)}}\Big)^{\frac{\alpha-1}{\alpha}} = 
 d^{\frac{\alpha-2}{2\alpha}} >1.
 \end{align*}
 Indeed,  $d^{\frac{\alpha-2}{2\alpha}}$ is the maximum  value that the variational problem \eqref{h} over vectors  $\textbf{u}$, supported on the levels 0 and 1, can take. This is because
 \begin{align*}
    2 \Big (|u_0|^{\frac{\alpha}{2(\alpha-1)}} \sum_{i=1}^d |u_i|^{\frac{\alpha}{2(\alpha-1)}}  \Big) ^{\frac{\alpha-1}{\alpha}} \le   2 ( d\cdot d^{-\frac{\alpha}{2(\alpha-1)}})^{\frac{\alpha-1}{\alpha}} |u_0|^{\frac{1}{2}}   \Big(\sum_{i=1}^d |u_i| \Big)^{\frac{1}{2}} \le  d^{\frac{\alpha-2}{2\alpha}},
 \end{align*}
where the first inequality is obtained by Jensen inequality (recall that $\alpha>2$) 
and the last inequality follows from the fact $|u_0| + \sum_{i=1}^d  |u_i| =1$.
{In fact when $\alpha>2,$ the functional in the variational problem \eqref{h} can  strictly increase once we take advantage of deeper level of the $d$-regular tree. 
We expect that if $\textbf{u} = (u_i)_{u=0,1,\cdots}$ is the maximizer, then $u_i$s take the identical value at the same level of tree.}

\subsection{$\mathbf{h_d(\boldsymbol{\alpha}) < \infty }$ for $\mathbf{\boldsymbol{\alpha}>2}$}
The finiteness of $h_d({\alpha})$ is an immediate consequence of the relation \eqref{relation} along with the inequality \eqref{112} and the discussion below it (note that the conjugate $\beta$ of $\alpha$ satisfies $\beta>1$).

\subsection{Continuity}
We establish a continuity property of the function $h_d(\alpha)$ in $\alpha$. Since the variational problem \eqref{h} is defined on the infinite-dimensional space, additional works are needed to verify the continuity of $h_d(\alpha)$. From now on, we assume that $u_i$s in \eqref{h} satisfies $u_i \ge 0$, and set 
\begin{equation}\label{set U}
	U \coloneqq \Big\{ \textbf{u} =(u_{0},u_{1},\cdots)\in[0,1]^{\mathbb{N}}: \sum_{i=0}^{\infty}u_{i} = 1  \Big\},
\end{equation}
equipped with the product topology. By Tychonoff theorem, the space $[0,1]^{\mathbb{N}}$ is compact, and since  any closed subset of compact sets is compact, $U$ is also compact.

We analyze a function $F_d: [1/2,1]\times U \rightarrow \R$  that appear  in the variational problem \eqref{h}:
\begin{equation*}
	F_d(\gamma,\textbf{u}) \coloneqq \sum_{ (i,j)\in E(\mathbb{T}_{d}) } u_{i}^{\gamma} u_{j}^{\gamma}
\end{equation*}
(the domain $[1/2,1]\times U$ is again equipped with the product topology). We show that the function $F_d$ is continuous both in the arguments $\gamma$ and $\textbf{u}$.
\begin{lemma} \label{lemma 5.1}
$F_d(\gamma,\textbf{u})$ is continuous on $[1/2,1]\times U$.
\end{lemma}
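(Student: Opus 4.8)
The plan is to realize $F_d$ as the increasing limit of its truncations to finite balls and then upgrade pointwise convergence to convergence that is uniform \emph{near each fixed point} — globally the truncation error cannot be controlled, since mass can leak to infinity, and circumventing this is the whole point. Concretely, I would fix the root $0$ of $\mathbb{T}_d$, let $B_R$ denote the set of vertices at graph distance at most $R$ from $0$, and set $F_d^{(R)}(\gamma,\textbf{u}) \coloneqq \sum_{(i,j)\in E(\mathbb{T}_d),\, i,j\in B_R} u_i^{\gamma}u_j^{\gamma}$, a finite sum. The first step is to note that each $F_d^{(R)}$ is continuous on $[1/2,1]\times U$: the coordinate maps $\textbf{u}\mapsto u_i$ are continuous for the product topology, and $(\gamma,u)\mapsto u^{\gamma}$ is jointly continuous on $[1/2,1]\times[0,1]$ once we set $0^{\gamma}\coloneqq 0$, the only delicate point being continuity at $u=0$, which follows from $u^{\gamma}\le u^{1/2}$ for $\gamma\ge 1/2$. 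Hence $F_d^{(R)}$ is a finite sum of products of continuous functions.

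Next I would record a uniform tail bound. Since $\gamma\ge 1/2$ and $u_i,u_j\in[0,1]$, one has $u_i^{\gamma}u_j^{\gamma}\le\sqrt{u_iu_j}\le\tfrac12(u_i+u_j)$, and any edge of $\mathbb{T}_d$ not counted in $F_d^{(R)}$ has both endpoints at distance $\ge R$ from $0$; using that every vertex of $\mathbb{T}_d$ has degree $d$, this gives
\[
0\ \le\ F_d(\gamma,\textbf{u})-F_d^{(R)}(\gamma,\textbf{u})\ \le\ \frac{d}{2}\sum_{i:\,\dist(0,i)\ge R}u_i .
\]
In particular $F_d(\gamma,\textbf{u})\le d/2<\infty$, so $F_d$ is well defined and $F_d^{(R)}\uparrow F_d$ pointwise; thus $F_d$ is automatically lower semicontinuous, and the real content is upper semicontinuity.

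The key step is the localization. Fix $(\gamma_0,\textbf{u}_0)\in[1/2,1]\times U$ and $\e>0$. Because $\textbf{u}_0\in U$ means $\sum_i (u_0)_i=1$, I can choose $R$ with $\sum_{i:\,\dist(0,i)\ge R}(u_0)_i<\e/(3d)$, equivalently $\sum_{i\in B_{R-1}}(u_0)_i>1-\e/(3d)$. Since $B_{R-1}$ is finite, there is a product-topology neighborhood $V$ of $(\gamma_0,\textbf{u}_0)$ in $[1/2,1]\times U$ on which simultaneously $|F_d^{(R)}(\gamma,\textbf{u})-F_d^{(R)}(\gamma_0,\textbf{u}_0)|<\e/3$ and $\sum_{i\in B_{R-1}}u_i>1-2\e/(3d)$; the latter, combined with $\textbf{u}\in U$, forces $\sum_{i:\,\dist(0,i)\ge R}u_i<2\e/(3d)$, so by the tail bound $F_d(\gamma,\textbf{u})-F_d^{(R)}(\gamma,\textbf{u})<\e/3$ on $V$. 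A three-term split then yields $|F_d(\gamma,\textbf{u})-F_d(\gamma_0,\textbf{u}_0)|<\e$ on $V$, proving continuity at $(\gamma_0,\textbf{u}_0)$; since the point was arbitrary, $F_d$ is continuous. (Since $[1/2,1]\times U$ is metrizable, one may instead phrase the same argument with sequences $(\gamma_n,\textbf{u}^{(n)})\to(\gamma_0,\textbf{u}_0)$ and a $\limsup$.)

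The main obstacle is conceptual rather than computational: the truncations do \emph{not} converge uniformly on all of $[1/2,1]\times U$, and one has to notice that the normalization $\sum_i u_i=1$ is exactly what lets a product-topology neighborhood control the tail $\sum_{i:\,\dist(0,i)\ge R}u_i$ — i.e., that establishing upper semicontinuity is the crux. The remaining ingredients, namely joint continuity of $u^{\gamma}$ up to $u=0$ and the elementary inequality $a^{\gamma}b^{\gamma}\le\tfrac12(a+b)$, are routine.
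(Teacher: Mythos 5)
Your proof is correct and follows essentially the same route as the paper's: truncate to a finite piece of the tree, use the $\ell^1$ normalization defining $U$ to show that a product-topology neighborhood (resp.\ a convergent sequence) controls the tail mass, and bound the tail contribution via $u_i^{\gamma}u_j^{\gamma}\le \sqrt{u_iu_j}\le \tfrac12(u_i+u_j)$ together with the bounded degree. The only differences are cosmetic — you work with neighborhoods and explicit truncations $F_d^{(R)}$ rather than sequences and the index cutoff $K$, and you spell out the joint continuity of $(\gamma,u)\mapsto u^{\gamma}$ at $u=0$, which the paper leaves implicit.
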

\begin{proof}
Throughout the proof, for any $\textbf{u}\in U$ and $K\in \N,$ we write $\textbf{u} = ( \textbf{u}|_{< K},\textbf{u}|_{\ge  K})$ with  
\begin{align*}
    \textbf{u}|_{< K}\coloneqq(u_0, u_{1}, \cdots,u_{K-1},0,\cdots),\quad \textbf{u}|_{\ge K}\coloneqq(0,0,\cdots,0,u_{K},u_{K+1},\cdots).
\end{align*}
	 Let $\{(\gamma_{n},\textbf{u}^{(n)})\}_{n\in \N}$ be any sequence in $[1/2,1]\times U$ such that $(\gamma_{n},\textbf{u}^{(n)})\to(\gamma,\textbf{u})$ as $n\to \infty$. Since $\norm{\textbf{u}}_1=1$, for any $\ep>0$, there is a large enough $K>0$ such that
	\begin{equation*}
		\lVert \textbf{u}|_{\ge K} \rVert_1 \le \ep.
	\end{equation*}
	 Since $\lim_{n\to \infty} \textbf{u}^{(n)} = \textbf{u}$ w.r.t.~product topology, for such $K$, for sufficiently large $n,$
\begin{align*}
    \lVert \textbf{u}^{(n)}|_{< K} - \textbf{u}|_{< K} \rVert_1 \le \e.
\end{align*}
Hence, 
 \begin{align} \label{521}
     \lVert \textbf{u}^{(n)}|_{\ge K} \rVert_1 = 1- \lVert \textbf{u}^{(n)}|_{< K} \rVert_1  \le 1- \lVert \textbf{u}|_{< K} \rVert_1 + \e  = \lVert \textbf{u}|_{\ge K} \rVert_1 + \e \le 2\ep.
 \end{align} 
Next, we write
	\begin{equation*}
		F_d(\gamma,\textbf{u}) = \Big(\sum_{ (i,j)\in E(\mathbb{T}_{d}),\min(i,j)<K  } u_{i}^{\gamma} u_{j}^{\gamma}\Big) + \Big(\sum_{ (i,j)\in E(\mathbb{T}_{d}),\min(i,j) \ge K  } u_{i}^{\gamma} u_{j}^{\gamma}\Big)
	\end{equation*}
	and
	\begin{equation*}
		F_d(\gamma_{n},\textbf{u}^{(n)}) = \Big(\sum_{ (i,j)\in E(\mathbb{T}_{d}),\min(i,j)<K  } (u^{(n)}_{i})^{\gamma_{n}} (u^{(n)}_{j})^{\gamma_{n}}\Big) + \Big(\sum_{ (i,j)\in E(\mathbb{T}_{d}),\min(i,j) \ge K  } (u^{(n)}_{i})^{\gamma_{n}} (u^{(n)}_{j})^{\gamma_{n}}\Big).
	\end{equation*}
	Since the summation in the first quantity above  only consists of finitely many terms, as $n\to \infty,$
	\begin{equation*}
 	 \sum_{ (i,j)\in E(\mathbb{T}_{d}), \min(i,j)<K  } (u^{(n)}_{i})^{\gamma_{n}} (u^{(n)}_{j})^{\gamma_{n}} 
	\rightarrow   \sum_{ (i,j)\in E(\mathbb{T}_{d}), \min(i,j)<K  } u_{i}^{\gamma} u_{j}^{\gamma}.
	\end{equation*}
	Using the fact $\gamma \ge  1/2$ and $\norm{\textbf{u}}_\infty = \max_{i\in \N} u_i \le 1$, 
	\begin{align*}
		\sum_{ (i,j)\in E(\mathbb{T}_{d}), \min(i,j) \ge K  } u_{i}^{\gamma} u_{j}^{\gamma}
	 &\le   \sum_{ (i,j)\in E(\mathbb{T}_{d}), \min(i,j)\ge K  } u_{i}^{1/2} u_{j}^{1/2}  \\
		&\le\frac{1}{2} \sum_{ (i,j)\in E(\mathbb{T}_{d}), \min(i,j)\ge K  } (u_{i}+u_j)  \le 2d\lVert \textbf{u}|_{\ge K}\rVert _1\le {2 d \ep}.
	\end{align*}
	Using \eqref{521} and the fact $\gamma_{n} \ge 1/2$, we similarly deduce that 
 \begin{align*}
     \sum_{ (i,j)\in E(\mathbb{T}_{d}),\min(i,j) \ge K  } (u^{(n)}_{i})^{\gamma_{n}} (u^{(n)}_{j})^{\gamma_{n}} \le {4 d \e}.
 \end{align*}Therefore, by the above series of information, we conclude the proof.
\end{proof}

 Let us define a function $\tilde{h}_d: [1/2,1]\to \R$ by
\begin{equation*}
	\tilde{h}_d(\gamma) \coloneqq \max_{\textbf{u}\in U} F_d(\gamma,\textbf{u}).
\end{equation*}
Since $F_d$ is continuous and its domain is compact,   $ \tilde{h}_d$ is continuous on $[1/2,1]$ as well.
Also, by the definition of $h_d$ in \eqref{h}, for any $\alpha \in (2,\infty)$, denoting by $\beta \in (1,2)$ the conjugate of $\alpha,$  
\begin{align}\label{relation2}
    h_d(\alpha) = 2\tilde{h}_d(\beta/2)^{1/\beta}.
\end{align}
Therefore, we establish the 
 continuity of $h_d$ on $(2,\infty)$.
 
 
\subsection{Limiting properties}
We establish the limiting properties of the function $h_d$, i.e.,
 $$\lim_{\alpha \downarrow 2}h_d(\alpha) = 1 \quad\text{and}\quad \lim_{\alpha \rightarrow \infty} h_d(\alpha)  =2 \sqrt{d-1}.$$

\subsubsection{Limit as $\alpha \downarrow 2$} 
We  first show that  $\lim_{\alpha \downarrow 2}h_d(\alpha) = 1 $.
By the relation \eqref{relation2}, denoting by $\beta$ the H\"{o}lder conjugate of $\alpha,$ since $\alpha \downarrow 2$ is equivalent to $\beta \uparrow 2,$ it suffices to show that
\begin{align}
  \lim_{\beta \uparrow 2}\tilde{h}_d(\beta/2)^{1/\beta}=\frac{1}{2}.
\end{align}
 Since the function $\tilde{h}_d$ is continuous on $[1/2,1]$, it reduces to show that  $\tilde{h}_d(1)= \frac{1}{4},$ in other words
\begin{align} \label{523}
    \sup_{\textbf{u}\in U} \sum_{ (i,j)\in E(\mathbb{T}_{d}) } u_{i} u_{j}=\frac{1}{4}.
\end{align}
More generally, we prove the following lemma.
\begin{lemma} \label{lem: pf of lem 2.1}
For any $\gamma\ge 1,$
    \begin{align*}
        \sup_{\textup{\textbf{u}}\in U} \sum_{ (i,j)\in E(\mathbb{T}_{d}) } u_{i}^\gamma u_{j}^\gamma =  \Big(\frac{1}{2}\Big)^\gamma \cdot \Big(\frac{1}{2}\Big)^\gamma  = 2^{-2\gamma}.
    \end{align*}
    The maximum is attained when $u_i=u_j=1/2$ for some vertices $i$ and $j$ connected by an edge.
\end{lemma}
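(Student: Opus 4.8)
The plan is to combine two elementary reductions: a power-mean inequality to dispose of the exponent $\gamma$, and the bipartiteness of the tree to dispose of the combinatorial structure. No compactness or continuity input (Lemma \ref{lemma 5.1}) is needed.

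\emph{Lower bound.} Fix any edge $(i_0,j_0)\in E(\mathbb{T}_{d})$ and let $\mathbf{u}\in U$ (see \eqref{set U}) be the vector with $u_{i_0}=u_{j_0}=\frac{1}{2}$ and all other coordinates equal to $0$. The only edge of $\mathbb{T}_{d}$ both of whose endpoints carry positive mass is $(i_0,j_0)$, so $\sum_{(i,j)\in E(\mathbb{T}_{d})}u_i^{\gamma}u_j^{\gamma}=(\tfrac12)^{\gamma}(\tfrac12)^{\gamma}=2^{-2\gamma}$. Hence the supremum is at least $2^{-2\gamma}$, attained by this vector, which is exactly the configuration asserted in the statement.

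\emph{Upper bound.} Let $\mathbf{u}\in U$ be arbitrary. Since $\mathbb{T}_{d}$ is a tree it is connected and bipartite, so we may $2$-colour its vertices $V(\mathbb{T}_{d})=A\sqcup B$ (for instance by the parity of the graph distance to a fixed root), and then every edge of $\mathbb{T}_{d}$ has exactly one endpoint in $A$ and one in $B$. As $\mathbb{T}_{d}$ is a simple graph, distinct edges give rise to distinct pairs of $A\times B$, so, writing $P\coloneqq\sum_{a\in A}u_a$ and $Q\coloneqq\sum_{b\in B}u_b$ (both finite, with $P+Q=\sum_{v}u_v=1$),
\[
\sum_{(i,j)\in E(\mathbb{T}_{d})}u_i^{\gamma}u_j^{\gamma}\;\le\;\sum_{a\in A}\sum_{b\in B}u_a^{\gamma}u_b^{\gamma}\;=\;\Big(\sum_{a\in A}u_a^{\gamma}\Big)\Big(\sum_{b\in B}u_b^{\gamma}\Big).
\]
Next I would invoke the elementary fact that for nonnegative reals $c_k$ and $\gamma\ge1$ one has $\sum_k c_k^{\gamma}\le(\sum_k c_k)^{\gamma}$ (each $c_k\le\sum_\ell c_\ell$, hence $c_k^{\gamma}\le(\sum_\ell c_\ell)^{\gamma-1}c_k$; sum over $k$, and pass to the limit of finite partial sums for countable families, the case $\sum_k c_k=\infty$ being trivial). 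Applied to the two inner sums this gives $\sum_{a\in A}u_a^{\gamma}\le P^{\gamma}$ and $\sum_{b\in B}u_b^{\gamma}\le Q^{\gamma}$, so the right-hand side above is at most $P^{\gamma}Q^{\gamma}=(PQ)^{\gamma}\le\big(\tfrac14(P+Q)^2\big)^{\gamma}=2^{-2\gamma}$ by AM--GM. Combined with the lower bound, this proves $\sup_{\mathbf{u}\in U}\sum_{(i,j)\in E(\mathbb{T}_{d})}u_i^{\gamma}u_j^{\gamma}=2^{-2\gamma}$.

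The only routine points to spell out are that the series of nonnegative terms may be manipulated as above (monotone limits of finite partial sums, all bounded by $2^{-2\gamma}$), and that no uniqueness of the maximizer is claimed, so the multiplicity phenomenon at $\gamma=1$ need not be addressed here. The one genuine idea is the bipartiteness step in the display: a priori the bound $\sum_{E(\mathbb{T}_{d})}u_iu_j\le\frac14$ looks like it ought to require an induction over the tree, and a naive leaf-removal induction is in fact lossy (the inductive estimate is not tight when the deleted leaf's neighbour already carries most of the mass). Passing to the complete bipartite sum $\big(\sum_{A}u_a\big)\big(\sum_{B}u_b\big)$ collapses the whole computation to one application of AM--GM, and recognizing this is the crux of the argument.
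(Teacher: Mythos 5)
Your proof is correct and follows essentially the same route as the paper: the paper also bipartitions $V(\mathbb{T}_d)$ by parity of distance to the root, bounds the edge sum by the product $\bigl(\sum_{V_1}u_i^\gamma\bigr)\bigl(\sum_{V_2}u_j^\gamma\bigr)$, applies $\sum_k c_k^\gamma\le(\sum_k c_k)^\gamma$ for $\gamma\ge1$, and finishes with AM--GM using $\sum_i u_i=1$. The only difference is that you spell out the lower bound and the convergence of the infinite sums explicitly, which the paper leaves implicit.
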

This immediately proves  \eqref{523} and Lemma \ref{formula k}.
\begin{proof}
     Let $V_1$ (resp.~$V_2$) be the collection of vertices in the infinite $d$-regular tree whose distance from the root is odd (resp.~even). Then, there are no edges within $V_1$ and $V_2$. Hence, 
     \begin{align*}
         \sum_{ (i,j)\in E(\mathbb{T}_{d}) } u_{i}^\gamma u_{j}^\gamma\le \Big(\sum_{i \in V_1 } u_{i}^\gamma \Big )  \Big(\sum_{j \in V_2 } u_{j}^\gamma \Big )  \le  \Big(\sum_{i \in V_1 } u_{i}  \Big )^\gamma  \Big(\sum_{j \in V_2 } u_{j}  \Big )^\gamma \le  \Big(\frac{1}{2}\Big)^\gamma \Big(\frac{1}{2}\Big)^\gamma .
     \end{align*}
     Here in the second and last inequality, we used the fact $\gamma\ge 1$ and $(\sum_{i \in V_1 } u_{i}) + (\sum_{j \in V_2 } u_{j})=1$ respectively.
\end{proof}

\begin{remark} \label{remark53}
    In fact, the above proof  shows that in the case $\gamma>1$, such form of the maximizer  is unique  (modulo the choice of an edge $(i,j)$). Our proof (i.e.~availability of the 
decomposition of the vertex set of a tree into $V_1$ and $V_2$) also implies that for any tree $T$ and $\gamma \ge 1$,
  \begin{align*}
        \sup_{ \textbf{u} = (u_i)_{i\in V(T)}, u_i\ge 0,  \norm{\textbf{u}}_1=1} \sum_{ (i,j)\in E(T) } u_{i}^\gamma u_{j}^\gamma =  \Big(\frac{1}{2}\Big)^\gamma \cdot \Big(\frac{1}{2}\Big)^\gamma  = 2^{-2\gamma}.
    \end{align*}
\end{remark}
  
\subsubsection{Limit as $\alpha\rightarrow\infty$} Next, we prove that  $\lim_{\alpha \rightarrow \infty} h_d(\alpha)  =2 \sqrt{d-1} .$ By the relation \eqref{relation2}, since $\alpha \to  \infty $ is equivalent to $\beta \downarrow 1,$ it suffices to show that
\begin{align}
  \lim_{\beta \downarrow 1}\tilde{h}_d(\beta/2)^{1/\beta}= \sqrt{d-1} .
\end{align}
Since  $\tilde{h}_d$ is continuous on $[1/2,1]$, it reduces to show that  $\tilde{h}_d(1/2)= \sqrt{d-1},$ in other words
\begin{align} \label{infinite}
    \sup_{\textbf{u}\in U} \sum_{ (i,j)\in E(\mathbb{T}_{d}) } 
 \sqrt{u_iu_j}= \sqrt{d-1}.
\end{align}

Although this is a well-known fact in the context of the spectral norm of the infinite $d$-regular tree, we provide a proof for the sake of completeness. To prove the upper bound in \eqref{infinite}, 
 for $i\in V(\mathbb{T}_{d}) = \{0,1,\cdots\}$,  denote by $\text{Ch}(i)$ the set of children of $i$. For any  $\mathbf{u}=(u_{0},u_{1},\cdots)\in U$, using the inequality $2\sqrt{u_iu_j}\le \frac{1}{\sqrt{d-1}} u_i + \sqrt{d-1}u_j$ for any $(i,j)\in E$ 
 with $j\in \text{Ch}(i)$,  
\begin{align*}
\sum_{ (i,j)\in E(\mathbb{T}_{d}) }\sqrt{u_iu_j}=	\sum_{i=0}^\infty\sum_{j\in \text{Ch}(i)} \sqrt{u_iu_j}
	&\le \frac{1}{2}\sum_{i=0}^\infty \sum_{j\in \text{Ch}(i)} \Big(\frac{1}{\sqrt{d-1}} u_i + \sqrt{d-1}u_j\Big) \\
	&= \frac{d}{2\sqrt{d-1}}u_{0} + \sum_{i=1}^\infty \sqrt{d-1} u_{i} \le \sqrt{d-1}
\end{align*}
(recall that $i=0$ denotes the root of the infinite $d$-regular tree), where the last inequality follows from the condition $\sum_{i=0}^{\infty}u_{i}=1$.

Next, we prove the lower bound in \eqref{infinite}. 
For large enough $L\in\mathbb{N}$,  define  
 a vector $\mathbf{v}=(v_{0},v_{1},\cdots)$ as follows: 
\begin{equation*}
	v_{i} =
	\begin{cases}
		\frac{1}{L} & i=0 
 \ (\mathsf{root}),\\
		\frac{1}{Ld(d-1)^{\dist(0,i)-1}} & 1\le\dist(0,i)\le L-1,\\
		0 & \text{otherwise.}
	\end{cases}
\end{equation*}
Since $\sum_{i=0}^{\infty}v_{i}=1$ and $v_{i}\in[0,1]$, we have $ \textbf{v} \in U$. Moreover,  
\begin{align*}
	\sum_{ (i,j)\in E(\mathbb{T}_{d}) }\sqrt{v_iv_j} &= {\frac{\sqrt{d}}{L}} + \sum_{\dist(0,i)=1}^{L-2}\sum_{j\in \text{Ch}(i)}\sqrt{v_iv_j} \\
	& = {\frac{\sqrt{d}}{L}} + \sum_{k=1}^{L-2}d(d-1)^{k}\sqrt{\frac{1}{Ld(d-1)^{k-1}}\cdot\frac{1}{Ld(d-1)^{k}}}  \ge  {\frac{\sqrt{d}}{L}} + \frac{(L-2)\sqrt{d-1}}{L}.
\end{align*}
For any $\e>0$, this quantity is greater than $\sqrt{d-1}-\e$   for large enough $L$.

\subsection{Proof of Lemma \ref{lemma 4.1}}
Finally, we provide a proof of Lemma \ref{lemma 4.1}. 
\begin{proof}[Proof of Lemma \ref{lemma 4.1}]
 Define  \begin{equation}
        F(u) \coloneqq \Big(\sum_{(i,j)\in E} u_{i}^{\gamma}u_{j}^{\gamma}\Big)^{\frac{1}{2\gamma}} .
    \end{equation}
    Then, the condition \eqref{652}  is equivalent to  $F(u) \ge \frac{1-\e}{2}.$
    As in the proof of Lemma \ref{lem: pf of lem 2.1},  for a (fixed) vertex $v_0 \in V$, let $V_1$ (resp.~$V_2$) be the collection of vertices  whose distance from $v_0$ is odd (resp.~even). Since $\gamma>1$,
    \begin{equation} \label{41}
     \Big(\frac{1-\e}{2} \Big)^{2\gamma}\le  \sum_{ (i,j)\in E } u_{i}^{\gamma} u_{j}^{\gamma}
    \le \Big(\sum_{i \in V_1 } u_{i}^{\gamma} \Big )  \Big(\sum_{j \in V_2 } u_{j}^{\gamma} \Big )
    \le \Big(\sum_{i \in V_1 } u_{i} \Big )^{\gamma}\Big(\sum_{j \in V_2 } u_{j} \Big )^{\gamma}.
    \end{equation} 
If 
   $ | \sum_{i\in V_{1}}u_{i} - \frac{1}{2} | > \sqrt{\e} $ (equivalently, $ 
    | \sum_{j\in V_{2}}u_{j} - \frac{1}{2}| > \sqrt{\e}$), 
then  $ F(u) < (\frac{1}{2}+\sqrt{\e} )^{1/2}(\frac{1}{2}-\sqrt{\e})^{1/2}  < \frac{1-\e}{2}$, contradicting  the condition \eqref{652}.
Hence,  
\begin{equation}\label{eq: add cond}
    \frac{1}{2} - \sqrt{\e} \le \sum_{i \in V_1 } u_{i}   \le \frac{1}{2} + \sqrt{\e}
    \quad\text{and}\quad
    \frac{1}{2} -  \sqrt{\e} \le \sum_{j\in V_{2}} u_{j}  \le \frac{1}{2} + \sqrt{\e}.
\end{equation}
This in particular means  $\sum_{i \in V_2 } u_{i}^\gamma  \le (\frac{1}{2} + \sqrt{\e})^\gamma$ (similarly for $V_1$), which together with \eqref{41} imply
\begin{align} \label{651}
    \sum_{i \in V_1 } u_{i}^\gamma  \ge  \Big(\frac{1-\e}{2} \Big)^{2\gamma} 
  \Big(\frac{1}{2}+\sqrt{\e} \Big)^{-\gamma}
  > \Big( \frac{1}{2}-\sqrt{\e} \Big)^\gamma.
\end{align}

We claim that there exists a large constant $c = c(\gamma) >0$ such that {for any small enough $\ep>0$,} under the condition \eqref{eq: add cond}, 
\begin{align} \label{65}
    \max_{i\in V_1} u_i < \frac{1}{2}-c\sqrt{\e} \Rightarrow \sum_{i \in V_1 } u_{i}^{\gamma}\le \Big(\frac{1}{2}-2\sqrt{\e}\Big)^\gamma. 
\end{align}
Using inductively the fact $
    x^\gamma + y^\gamma \le (x+z)^\gamma + (y-z)^\gamma$ for any $\gamma>1$ and $x\ge y \ge z \ge 0,$ we deduce that 
if $\max_{i\in V_1} u_i < \frac{1}{2}-c\sqrt{\e}$ and $\sum_{i \in V_1 } u_{i} \le \frac{1}{2}+\sqrt{\e}$, then
\begin{align*}
    \sum_{i \in V_1 } u_{i}^{\gamma}\le \Big(\frac{1}{2}-c\sqrt{\e}\Big)^\gamma + ((c+1)\sqrt{\e})^\gamma.
\end{align*}
For a large $c>0$ {and a small  $\ep>0$}, this bound is at most $(\frac{1}{2}-2\sqrt{\e})^\gamma,$ establishing the claim \eqref{65}.

Therefore, by \eqref{651} and \eqref{65},   there exists $i_0\in V_1$ such that $u_{i_0} \ge \frac{1}{2}-c\sqrt{\e}.$ Similarly,  there exists $j_0\in V_2$ such that $u_{j_0} \ge \frac{1}{2}-c\sqrt{\e}.$
These vertices $i_0$ and $j_0$ are connected by an edge since otherwise
\begin{align*}
\Big(\frac{1-\e}{2}\Big)^{2\gamma}\le     \sum_{ (i,j)\in E } u_{i}^{\gamma} u_{j}^{\gamma}
    \le \Big(\sum_{i \in V_1 } u_{i}^{\gamma} \Big )  \Big(\sum_{j \in V_2 } u_{j}^{\gamma} \Big ) - u_{i_0}^\gamma u_{j_0}^\gamma \le \Big(\frac{1}{2}\Big)^{2\gamma} - \Big(\frac{1}{2}-c\sqrt{\e} \Big)^{2\gamma},
\end{align*}
yielding a contradiction.

\end{proof}

\section{Appendix}\label{sec6}

In this appendix, we verify Proposition \ref{prop: almost tree nbhd}, and then present basic properties about the largest eigenvalue of symmetric matrices. Finally, we state a crucial tail estimate on the sum of i.i.d.~Weibull distributions.

\subsection{Proof of Proposition \ref{prop: almost tree nbhd}}
We deduce Proposition \ref{prop: almost tree nbhd} by following  the argument in \cite[Proposition 4.1]{BHY19}.
\begin{proof}[Proof of Proposition \ref{prop: almost tree nbhd}]
For $i\in V(G)$, let $M_{i}$ be the excess in $B_{R_{n}}(i)$.  Then, since $M_i$
 is stochastically dominated by a binomial random variable $\text{Bin}(N,p)$ with $N:= d(d-1)^{R_n}$ and  $p := d(d-1)^{R_n-1}/n \ll 1$ 
 (see 
 \cite[Equation (2.4)]{LS10}),
    \begin{equation*}
        \P(M_{i} \ge w+1) \lesssim \binom{d(d-1)^{R_{n}}}{w+1}\cdot\paren{\frac{d(d-1)^{R_{n}-1}}{n}}^{w+1} \lesssim n^{-w-1}(d-1)^{2R_{n}(w+1)}.
    \end{equation*}
    By a union bound over all vertices $i\in V(G)$,
    \begin{equation}\label{eq: condition i violated}
    \P( \mathcal{E}_{1,1}^c) \le Cn^{-w}(d-1)^{2R_{n}(w+1)}.
    \end{equation}

    Next, let $M$ be the number of edges in $G$ which lie on cycles of length at most $2R_{n}$. Then, 
    \begin{equation*}
        \size{\parenn{i\in V(G):\text{$B_{R_n}(i)$ contains a cycle}}}
        \le 2(d-1)^{R_{n}}M
    \end{equation*}
    (see \cite[Equation (A.1)]{BHY19} for the explanation).
By \cite[Theorem 4]{MWW04} with $k\coloneqq 2R_{n}$ and $x\ge 2$,
    \begin{equation*}
        \P(M=40R_nx(d-1)^{2R_n}) \le (e^{5(x-1)}x^{-5x})^{(d-1)^{2R_n}} \le \exp( -c (d-1)^{2R_{n}} )
    \end{equation*}
    for some universal constant $c>0$.  By a union bound,  
    \begin{equation*}
        \P(M \ge 80R_{n}(d-1)^{2R_n} )
        \le dn \exp( -c (d-1)^{2R_{n}} ).
    \end{equation*}
 Hence, for sufficiently large $n$, with probability at least $1-dn \exp( -c (d-1)^{2R_{n}} )$,
    \begin{equation*}
        \size{\parenn{i\in V(G):\text{$B_{R_n}(i)$ contains a cycle}}}
        \le 2(d-1)^{R_{n}}\cdot 80R_{n}(d-1)^{2R_{n}}\le (d-1)^{4R_n}.
    \end{equation*} 
This implies
    \begin{equation}\label{eq: condition ii violated}
    \P( \mathcal{E}_{1,2}^c) \le  dn \exp( -c (d-1)^{2R_{n}} ) .
    \end{equation} 
If $\log \log n \ll R_n \ll \log n,$ then for any constant $\e>0,$ bounds \eqref{eq: condition i violated} and \eqref{eq: condition ii violated} are at most $n^{-w+\e}$ for sufficiently large $n$.
\end{proof}

\subsection{Largest eigenvalue of symmetric matrices}
First, we state the  variational formula for the largest eigenvalue of symmetric matrices.
\begin{lemma} \label{variation}
    Let $X=(X_{ij})_{i,j\in [n]}$ be any symmetric matrix. Then,
    \begin{align} \label{var}
        \lambda_1(X)  = \sup_{\norm {\textup{\textbf{v}}}_2 = 1}  \sum_{i,j \in [n]}  X_{ij}v_iv_j.
    \end{align}
\end{lemma}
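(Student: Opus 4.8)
The statement to prove is the Courant--Fischer / Rayleigh quotient characterization of the largest eigenvalue of a symmetric matrix. This is entirely standard, so the plan is short.

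\medskip

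\textbf{Proof plan.} The plan is to prove the two inequalities that together give \eqref{var}. Since $X$ is real symmetric, by the spectral theorem there is an orthonormal basis $\{\mathbf{w}_1,\dots,\mathbf{w}_n\}$ of $\R^n$ consisting of eigenvectors, with $X\mathbf{w}_k = \lambda_k(X)\mathbf{w}_k$ and $\lambda_1(X)\ge\lambda_2(X)\ge\cdots\ge\lambda_n(X)$. First I would establish the lower bound: taking $\textbf{v}=\mathbf{w}_1$, which has $\norm{\mathbf{w}_1}_2=1$, one computes $\sum_{i,j}X_{ij}(w_1)_i(w_1)_j = \mathbf{w}_1^\top X\mathbf{w}_1 = \lambda_1(X)\norm{\mathbf{w}_1}_2^2 = \lambda_1(X)$, so the supremum on the right-hand side is at least $\lambda_1(X)$. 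Second, for the upper bound, let $\textbf{v}$ be arbitrary with $\norm{\textbf{v}}_2=1$ and expand $\textbf{v}=\sum_{k=1}^n c_k\mathbf{w}_k$ with $\sum_k c_k^2 = \norm{\textbf{v}}_2^2 = 1$. Then
\begin{align*}
\sum_{i,j\in[n]} X_{ij}v_iv_j = \textbf{v}^\top X\textbf{v} = \sum_{k=1}^n \lambda_k(X)c_k^2 \le \lambda_1(X)\sum_{k=1}^n c_k^2 = \lambda_1(X),
\end{align*}
using that $\lambda_k(X)\le\lambda_1(X)$ for every $k$ and $c_k^2\ge 0$. Taking the supremum over all unit $\textbf{v}$ gives the reverse inequality, and combining the two completes the proof.

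\medskip

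There is no real obstacle here; the only point requiring a word of care is that the supremum is attained (so one could even write $\max$), which follows since the unit sphere in $\R^n$ is compact and $\textbf{v}\mapsto\textbf{v}^\top X\textbf{v}$ is continuous — but for the statement as written, exhibiting $\mathbf{w}_1$ as in the lower bound already shows the supremum equals $\lambda_1(X)$.
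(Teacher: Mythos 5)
Your proof is correct: it is the standard Rayleigh-quotient argument via the spectral theorem, establishing both inequalities cleanly. The paper states this lemma without proof as a classical fact, and your argument is exactly the canonical one that would be supplied.
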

Next, we  state a trivial lower bound for the largest eigenvalue of symmetric matrices, obtained as an immediate consequence of Lemma \ref{variation}.
\begin{lemma} \label{variation2}
    Let $X=(X_{ij})_{i,j\in [n]}$ be any symmetric matrix {with zero diagonal.} Then,
    \begin{align}
        \lambda_1(X) \geq  \max_{i,j\in [n]} |X_{ij}|.
    \end{align}
\end{lemma}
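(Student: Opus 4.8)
The plan is to derive this as an immediate corollary of the variational formula in Lemma \ref{variation} by exhibiting an explicit test vector. First I would fix a pair of indices $i_0, j_0 \in [n]$ attaining $\max_{i,j\in[n]}|X_{ij}|$. If all entries of $X$ vanish then both sides of the claimed inequality are zero and there is nothing to prove; otherwise, since the diagonal of $X$ is zero, we necessarily have $i_0 \ne j_0$, so that $\max_{i,j\in[n]}|X_{ij}| = |X_{i_0 j_0}|$ with $i_0\ne j_0$.

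Next I would construct the unit vector $\textbf{v} = (v_k)_{k\in[n]}$ supported on the two coordinates $i_0$ and $j_0$, setting $v_{i_0} = \tfrac{1}{\sqrt2}$ and $v_{j_0} = \tfrac{\sgn(X_{i_0 j_0})}{\sqrt2}$ (with the convention $\sgn(0)=1$, which is harmless here), and $v_k = 0$ otherwise. This vector has $\norm{\textbf{v}}_2 = 1$. Plugging into \eqref{var} and using $X_{i_0 i_0} = X_{j_0 j_0} = 0$ together with the symmetry $X_{i_0 j_0} = X_{j_0 i_0}$, I get
\begin{equation*}
\lambda_1(X) \;\ge\; \sum_{k,l\in[n]} X_{kl} v_k v_l \;=\; 2 X_{i_0 j_0} v_{i_0} v_{j_0} \;=\; |X_{i_0 j_0}| \;=\; \max_{i,j\in[n]} |X_{ij}|,
\end{equation*}
which is exactly the assertion. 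There is no genuine obstacle in this argument; the only point requiring (minor) care is the handling of the diagonal, which is why the hypothesis that $X$ has zero diagonal is invoked — it is what guarantees that the maximum over all index pairs coincides with the maximum over off-diagonal pairs and that the quadratic form evaluated at $\textbf{v}$ has no contribution from $X_{i_0 i_0}$ or $X_{j_0 j_0}$.
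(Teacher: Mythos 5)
Your proof is correct and follows exactly the route the paper intends: the lemma is stated there as an immediate consequence of the variational formula (Lemma \ref{variation}), and your two-coordinate test vector with the sign adjustment is the standard way to realize that. The careful handling of the zero-diagonal hypothesis and the trivial all-zero case is a welcome bit of extra rigor, but otherwise the argument is the same.
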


\subsection{Sum of i.i.d.~Weibull  distributions}
Finally,  we state a lemma regarding the tail of i.i.d.~sum of  Weibull distributions conditioned to be large in absolute value.

\begin{lemma}\label{lem: conditioned alpha power sum asymp}
For any  $b>1$, let  $\{Y_i\}_{i\ge 1}$ be i.i.d.~Weibull random variables with a shape parameter $\alpha>0$, conditioned to be greater than $b^{1/\alpha}$ in absolute value. Then, there exists a constant $C>0$ (depending only on $C_1$ and $C_2$ from Definition \ref{def: weights}) such that for any  $m\in \N$ and $L>m$,
	\begin{equation*}
	\P(|Y_1|^\alpha+\cdots+|Y_m|^\alpha\ge L) \le
	\paren{\frac{CL}{m}}^{m}e^{-L+m+ mb} \le  e^{-L+m+ mb + m \log (CL)}.
	\end{equation*}
\end{lemma}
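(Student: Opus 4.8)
The plan is to reduce the statement to a Chernoff bound for a sum of i.i.d.\ nonnegative random variables with an exponential tail.

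\textbf{Setup and the key tail estimate.} I would set $Z_i := |Y_i|^{\alpha}$ for $i=1,\dots,m$. Since each $Y_i$ is (independently) conditioned on $|Y_i|\ge b^{1/\alpha}$, i.e.\ on $Z_i\ge b$, the variables $Z_1,\dots,Z_m$ are i.i.d.\ with $Z_i\ge b$ almost surely. The hypothesis $b>1$ forces $b^{1/\alpha}>1$ and, for any $t\ge b$, also $t^{1/\alpha}>1$, so the Weibull bounds of Definition~\ref{def: weights} apply at both $b^{1/\alpha}$ and $t^{1/\alpha}$; hence for $t\ge b$,
\[
\P(Z_1\ge t)=\frac{\P(|Y_1|\ge t^{1/\alpha})}{\P(|Y_1|\ge b^{1/\alpha})}\le\frac{C_2e^{-t}}{C_1e^{-b}}=C'e^{\,b-t},\qquad C':=\tfrac{C_2}{C_1}\ge 1 .
\]

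\textbf{Moment generating function.} For $\theta\in(0,1)$, writing $e^{\theta Z_1}=e^{\theta b}+\theta\int_b^\infty e^{\theta t}\1_{\{Z_1>t\}}\,dt$ and applying Fubini (all terms nonnegative; finiteness holds since $\theta<1$), the previous display gives
\[
\E\!\left[e^{\theta Z_1}\right]=e^{\theta b}+\theta\int_b^\infty e^{\theta t}\,\P(Z_1>t)\,dt\le e^{\theta b}\left(1+\frac{\theta C'}{1-\theta}\right).
\]

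\textbf{Chernoff bound.} Since $L>m$, the choice $\theta:=1-m/L$ lies in $(0,1)$. By independence and Markov's inequality,
\[
\P\!\left(\textstyle\sum_{i=1}^m Z_i\ge L\right)\le e^{-\theta L}\left(\E[e^{\theta Z_1}]\right)^{m}\le e^{-\theta L+\theta mb}\left(1+\frac{\theta C'}{1-\theta}\right)^{\!m}.
\]
Now $e^{-\theta L}=e^{-L+m}$, $e^{\theta mb}\le e^{mb}$ (as $\theta\le 1$), and $1-\theta=m/L$ together with $L/m\ge 1$ give $1+\frac{\theta C'}{1-\theta}\le 1+\frac{C'L}{m}\le\frac{(1+C')L}{m}$. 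Setting $C:=1+C_2/C_1$ yields the first inequality, and the second follows from $\left(\tfrac{CL}{m}\right)^{m}=e^{\,m\log(CL)-m\log m}\le e^{\,m\log(CL)}$, valid because $m\ge 1$.

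\textbf{On the difficulty.} There is no substantial obstacle: this is a routine Chernoff estimate once the conditional tail bound is recorded. The only points needing care are (i) checking Definition~\ref{def: weights} may be invoked uniformly for $t\ge b$, which uses $b>1$; (ii) the requirement $\theta\in(0,1)$, which is exactly where $L>m$ enters; and (iii) absorbing the constant $C'\ge 1$ into $CL/m$ via $L\ge m$.
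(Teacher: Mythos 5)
Your proof is correct, and it is essentially the same argument as the paper's: both use the exponential Chebyshev (Chernoff) bound with the choice $s=\theta=1-m/L$, both derive the same conditional tail estimate $\P(|Y_1|^\alpha\ge t)\le \frac{C_2}{C_1}e^{b-t}$ for $t\ge b$, and both obtain the bound $\E[e^{\theta|Y_1|^\alpha}]\le (1+\frac{\theta C'}{1-\theta})e^{\theta b}$ on the moment generating function. Your reparametrization via $Z_i=|Y_i|^\alpha$ and the layer-cake identity is cosmetically different from the paper's substitution $u=x^\alpha$ inside the density integral, but the computations are identical in substance.
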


\begin{proof}
By  the  exponential Chebyshev bound, for any $s>0$,
\begin{align} \label{500}
\mathbb{P} \left (|Y_1|^\alpha+\cdots+|Y_m|^\alpha \geq    L \right )   \leq e^{-  sL }  \mathbb{E}\left [ e^{s|Y_1|^\alpha} \right ]^m.
\end{align}
By a tail bound  of the Weibull distribution (see Definition \ref{def: weights}), there exists a constant $C \geq 1$  such that   for $x> b^{1/\alpha} $,
\begin{align*}
\mathbb{P} \left ( |Y_1 | \ge x \right ) \leq   C e^{ b }    e^{- x^\alpha } ,
\end{align*}
and for $0\leq x\leq b^{1/\alpha}$, $\mathbb{P}(|Y_1 | \ge  x)  = 1$.
Hence,  for any {$0<s< 1$,}
\begin{align*}
\mathbb{E} \left [ e^{s|Y_1|^\alpha} \right ] &= 1  +  \int_0^\infty  e^{sx^\alpha} s \alpha x^{\alpha-1 } \mathbb{P} \left (|Y_1|  \ge  x \right ) dx \\
&\le   1+ \int_0^{  b^{1/\alpha}  } e^{sx^\alpha} s \alpha x^{\alpha-1 }  dx +  C \int_{ b^{1/\alpha}   }^\infty  e^{sx^\alpha} s \alpha x^{\alpha-1 } \cdot  e^{ b }    e^{- x^\alpha } dx \\
&\leq      e^{ s b }      +   C e^{ b } \frac{s}{ 1-s} e^{ (s-1)b } =   \Big(1+ \frac{Cs}{ 1-s}\Big) e^{ s b } .
\end{align*} 
Applying this to \eqref{500}, for any $0<s< 1$,
\begin{align*}
\mathbb{P} \left ( |Y_1|^\alpha+\cdots+|Y_m|^\alpha \geq    L \right ) \leq    e^{-sL}   \Big ( 1+ \frac{Cs}{ 1-s}  \Big)^m        e^{ s m b } .
\end{align*}
Setting $s: =    1- \frac{m}{  L   } \in (0,1) $ (recall that $L>m$), using the fact $C\ge 1,$
\begin{align*}
\mathbb{P} \left ( |Y_1|^\alpha+\cdots+|Y_m|^\alpha \geq    L \right ) &\leq  e^{-  L+m}  \left (  1+ \frac{C(L-m)}{m}  \right )^m    e^{ m b }\leq     e^{-  L+m}   \left ( \frac{CL}{m} \right )^m    e^{m b},
\end{align*}
which concludes the proof.

\end{proof}


\end{document}